%
%

\documentclass[11pt,oneside]{ip-journal}
\usepackage{geometry}
\usepackage{epstopdf}
\usepackage{mathtools}
\usepackage[cmtip,all]{xy}

\usepackage{mathrsfs}
\usepackage{amscd}
\usepackage{amsmath}
\usepackage{amssymb}
\usepackage{amsthm}
\usepackage{float}
\usepackage[dvips]{graphicx}
\usepackage{color}
\usepackage{tikz} 
\usepackage{enumerate}
\usepackage{appendix}
\usetikzlibrary{matrix,arrows}



\usepackage{array}
\usepackage{amscd}
\usepackage[all]{xy}

\DeclareFontFamily{U}{rsfs}{%
\skewchar\font127}
\DeclareFontShape{U}{rsfs}{m}{n}{%
<-6>rsfs5<6-8.5>rsfs7<8.5->rsfs10}{}
\DeclareSymbolFont{rsfs}{U}{rsfs}{m}{n}
\DeclareSymbolFontAlphabet
{\mathrsfs}{rsfs}
\DeclareRobustCommand*\rsfs{%
\@fontswitch\relax\mathrsfs}

\DeclareFontFamily{U}{rsfs}{%
\skewchar\font127}
\DeclareFontShape{U}{rsfs}{m}{n}{%
<-6>rsfs5<6-8.5>rsfs7<8.5->rsfs10}{}
\DeclareSymbolFont{rsfs}{U}{rsfs}{m}{n}
\DeclareSymbolFontAlphabet
{\mathrsfs}{rsfs}
\DeclareRobustCommand*\rsfs{%
\@fontswitch\relax\mathrsfs}

\theoremstyle{plain}
\newtheorem{theorem}{Theorem}
\newtheorem{thm}{Theorem}[section]
\newtheorem{prop}[thm]{Proposition}
\newtheorem{lem}[thm]{Lemma}

\newtheorem{defi}[thm]{Definition}
\newtheorem{rmk}[thm]{Remark}
\newtheorem{cor}[thm]{Corollary}

\newtheorem*{prop*}{Proposition}
\newtheorem*{notn}{Notation}

\newtheorem{prop-defi}[thm]{Proposition-Definition}
\newtheorem{thm-defi}[thm]{Theorem-Definition}
\newtheorem{lem-defi}[thm]{Lemma-Definition}

\newcommand{\C}{\mathbb{C}}
\newcommand{\ZZ}{\mathbb{Z}}
\newcommand{\Z}{\mathcal{Z}}
\newcommand{\I}{\mathcal{I}}
\renewcommand{\div}{\operatorname{div}}
\renewcommand{\det}{\operatorname{det}}
\newcommand{\pts}{\operatorname{pts}}
\newcommand{\pic}{\operatorname{Pic}}
\renewcommand{\O}{\mathcal{O}}
\newcommand{\n}{{\boldsymbol{n}}}

\newcommand{\betab}{{\boldsymbol{\beta}}}

\renewcommand{\S}[1]{S^{[#1]}}

\renewcommand{\i}[1]{\I^{[#1]}}
\newcommand{\fI}[1]{\mathfrak{I}^{[#1]}}
\newcommand{\id}{\operatorname{id}}
\newcommand{\pr}{\operatorname{pr}}
\newcommand{\LL}{\mathbb{L}^{\bullet}}

\newcommand{\Fb}{F^{\bullet}}
\newcommand{\Ab}{A^{\bullet}}
\newcommand{\Bb}{B^{\bullet}}
\newcommand{\Cb}{C^{\bullet}}
\newcommand{\Eb}{E^{\bullet}}
\newcommand{\Gb}{G^{\bullet}}
\newcommand{\Fbv}{F^{\bullet \vee}}
\newcommand{\Fbr}{F^{\bullet}_{\operatorname{rel}}}
\newcommand{\Fbrv}{F^{\bullet \vee}_{\operatorname{rel}}}
\newcommand{\dR}{\mathbf{R}}
\newcommand{\dL}{\mathbf{L}}
\renewcommand{\hom}{\mathcal{H}om}
\newcommand{\ext}{\mathcal{E}xt}
\newcommand{\Hom}{\operatorname{Hom}}
\newcommand{\Ext}{\operatorname{Ext}}
\newcommand{\cone}{\operatorname{Cone}}

\newcommand{\tr}{\operatorname{tr}}
\newcommand{\p}{\operatorname{p}}
\newcommand{\q}{\operatorname{q}}
\newcommand{\coker}{\operatorname{coker}}
\newcommand{\at}{\operatorname{At}}
\newcommand{\Tb}{\overline{T}}

\newcommand{\rank}{\operatorname{Rank}}
\newcommand{\mM}{\mathcal{M}}

\newcommand{\vir}{\operatorname{vir}}
\newcommand{\dT}{\mathbf{T}}

\newcommand{\ct}[1]{(\C^2)^{[#1],\dT}}
\newcommand{\ST}[1]{S^{[#1],\dT}}

\newcommand{\SD}[1]{(S/D)^{[#1]}}

\newcommand{\cT}{\mathcal{T}}
\newcommand{\cZ}{\mathsf Z}
\newcommand{\fC}{\mathfrak{C}}
\renewcommand{\exp}{\operatorname{exp}}
\newcommand{\str}{\operatorname{str}}
\renewcommand{\c}{\mathsf{c}}

\newcommand{\F}{\mathcal{F}}
\newcommand{\FF}{\mathbb{F}}
\newcommand{\IIb}{\mathbb{I}^\bullet}

\newcommand{\T}{\mathcal{T}}

\newcommand{\D}{\mathcal{D}}
\newcommand{\nest}{\operatorname{nest}}

\newcommand{\red}{\operatorname{red}}
\newcommand{\rel}{\operatorname{rel}}
\newcommand{\ob}{\operatorname{ob}}
\newcommand{\cS}{\mathcal{S}}
\newcommand{\cP}{\mathcal{P}}
\newcommand{\sN}{\mathsf{N}}

\newcommand{\sF}{\mathsf{F}}
\newcommand{\sK}{\mathsf{K}}
\newcommand{\sT}{\mathsf{T}}
\newcommand{\sE}{\mathsf{E}}

\newcommand{\kk}[1]{\sK^{[#1]}}

\newcommand{\PP}{\mathbb{P}}
\newcommand{\spec}{\operatorname{Spec}}

\title[Nested Hilbert schemes on Surfaces]{Nested Hilbert schemes on surfaces:\\ Virtual fundamental class}

\author{Amin Gholampour and Artan Sheshmani and Shing-Tung Yau}

\begin{document}

\maketitle

\begin{abstract}
We construct virtual fundamental classes on nested Hilbert schemes of points and curves in complex nonsingular projective surfaces. These classes recover the virtual classes of Seiberg-Witten theory as well as the (reduced) stable theory, and play a crucial role in the reduced Donaldson-Thomas theory of local-surface-threefolds that we study in \cite{GSY17b}. We show that certain integrals against the virtual fundamental classes of punctual nested Hilbert schemes are expressed as integrals over the products of the Hilbert scheme of points. We are able to find explicit formulas for some of these integrals by relating them to Carlsson-Okounkov's vertex operator formulas.   
\end{abstract}

\setcounter{tocdepth}{3}
\tableofcontents

\section{Introduction} Hilbert scheme of points on a nonsingular surface $S$ have been vastly studied. They are nonsingular varieties with rich geometric structures some of which have applications in physics (see \cite{N99} for a survey).  We are mainly interested in the enumerative geometry of Hilbert schemes of points \cite{G90, L99, CO12, GS16}. This has applications in curve counting problems on $S$  \cite{LT14, R17}. The first two authors of this paper have studied the relation of some of these enumerative problems to the Donaldson-Thomas theory of 2-dimensional sheaves in threefolds and to S-duality conjectures \cite{GS13}.
In contrast, Hilbert scheme of curves on $S$ can be badly behaved and singular. They were studied in detail by D\"{u}rr-Kabanov-Okonek  \cite{DKO07} in the context of Poincar\'e invariants (algebraic Seiberg-Witten invariants \cite{CK13}). More recently, the stable pair invariants of surfaces have been employed in the context of curve counting problems \cite{PT10, MPT10, KT14, KST11}. The moduli space of stable pairs on $S$ is identified with the Hilbert scheme of points on curves on $S$, and so is a nested Hilbert scheme on $S$. This paper studies general nested Hilbert schemes of points and curves on $S$. The geometry of nested Hilbert scheme of points was studied in \cite{C98}. We construct a reduced and a non-reduced perfect obstruction theories for the nested Hilbert scheme of curves and points on $S$. The reduced one is an extension of the reduced perfect obstruction theory of stable pairs worked out in \cite{KT14}, and its construction closely follows their technique. Our main application of the non-reduced perfect obstruction theory is in the study of (reduced!)  Donaldson-Thomas theory of local surfaces that is carried out in  \cite{GSY17b}. The non-reduced perfect obstruction theory appears in the study of Vafa-Witten theory of $S$ \cite{TT17}, and also in the work of  A. Negut  \cite{N17}.

\subsection{Nested Hilbert schemes on surfaces}
Suppose that $r\in \ZZ_{>0}$, $\n:=n_{1},n_{2},\dots,n_{r}\in \ZZ_{\ge 0}$, and $\betab:=\beta_1,\dots,\beta_{r-1}\in H^2(S,\ZZ)$. We denote the corresponding nested Hilbert scheme $\S{\n}_\betab$, whose underlying set of closed points consists of tuples of subschemes of $S$ $$(Z_1,Z_2,\dots,Z_r),\quad (C_1,\dots,C_{r-1})$$ where $Z_i$ is a 0-dimensional of length $n_i$, and $C_i$ a divisor with $[C_i]=\beta_i$, and  for any $i<r$,  \begin{equation}\label{incl} I_{Z_{i}}(-C_i)\subseteq I_{Z_{i+1}},\end{equation} where $I_{Z_i}\subseteq \O_S$ is the ideal sheaf of $Z_i$. \footnote{Taking double dual shows that  an inclusion of ideals $I_{Z_i}(-D_i)\subseteq I_{Z_{i+1}}(-D_{i+1})\subset \O_S$ is equivalent to \eqref{incl} with $C_i=D_i-D_{i+1}$. So we are not losing information by starting with $r-1$ effective divisors $C_1,\dots, C_{r-1}$ instead of $r$ of them.}   
In other words, a closed point of $\S{\n}_\betab$ corresponds to a chain of subschemes of $S$ given by the ideals
$$I_{Z_1}(-C_1-\cdots- C_{r-1})\subseteq I_{Z_2}(-C_2-\cdots -C_{r-1})\subseteq \cdots \subseteq I_{Z_r}\subseteq \O_S.$$

To define invariants of $S$ arising from $\S{\n}_\betab$  (see Section \ref{invs}), we construct a  virtual fundamental class $[\S{\n}_\betab]^{\vir}$. More precisely, we construct a natural (non-reduced) perfect obstruction theory over  $\S{\n}_\betab$. This is done by studying the deformation/obstruction theory of the maps of coherent sheaves given by the natural inclusions \eqref{incl} following Illusie \cite{Ill} and Joyce-Song \cite{JS12} \footnote{Another approach would be to study the deformation/obstruction theory of quotients $I_{Z_{i+1}}/I_{Z_i}(-C_i)$ following the work of Gillam \cite{G11}, which in turn also uses a great deal of \cite{Ill}.  We give a brief sketch of this approach in Subsection \ref{gillam}.}.  As we will see, this in particular provides a uniform way of studying all known perfect obstruction theories on the Hilbert schemes of points and curves, as well as the stable pair moduli spaces on $S$. The main result of the paper is (Propositions \ref{abs-r}, \ref{abs} and Corollary \ref{virclass}):
\begin{theorem} \label{thm1}
Let $S$ be a nonsingular projective surface over $\C$ and $\omega_S$ be its canonical bundle.The nested Hilbert scheme $\S{\n}_\betab$ with $r\ge 2$ carries a natural perfect obstruction theory with the virtual fundamental class $$[\S{\n}_\betab]^{\vir} \in A_d( \S{\n}_\betab), \quad \quad  d=n_1+n_r+\frac{1}{2}\sum_{i=1}^{r-1}\beta_i\cdot(\beta_i-c_1(\omega_S)).$$ 
\end{theorem}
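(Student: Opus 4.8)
The plan is to construct a perfect obstruction theory on $\S{\n}_\betab$ of virtual dimension $d$, following Illusie's approach to deformations of morphisms of sheaves. I would begin by setting up a universal picture: over $S\times\S{\n}_\betab$, let $\I_i$ denote the universal ideal sheaf of $Z_i$ and $\O(-\mathcal{C}_i)$ the universal line bundle associated to the divisor family $C_i$. The nesting conditions \eqref{incl} assemble into a complex of universal morphisms $\Phi_i\colon \I_i(-\mathcal{C}_i)\to\I_{i+1}$. The key object is the cone $\mathbb{L}^{\bullet}$ (or rather a suitable derived-category truncation) built from the $\Phi_i$, and I would define the obstruction theory as the dual of $\dR\pr_{*}$ of a $\hom$-complex involving these maps, twisted appropriately so that the deformations of the $Z_i$, the line bundles $\O(-\mathcal{C}_i)$, and the morphisms are all captured simultaneously. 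Concretely, for the case $r=2$ I expect the obstruction theory to take the form $E^{\bullet}=\left(\dR\pr_{*}\dR\hom(\I_1(-\mathcal{C}_1)\to\I_2,\, \I_2)_0\right)^{\vee}[-1]$ or a close variant, where the subscript denotes a trace-free part; the general $r$ case is then obtained by iterating or taking a mapping cone over the chain.

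The steps I would carry out, in order: (1) \emph{Construct the morphism of complexes} $\phi\colon E^{\bullet}\to \mathbb{L}_{\S{\n}_\betab}$ to the (truncated) cotangent complex of the nested Hilbert scheme. This uses the Atiyah-class / Illusie machinery: the universal morphisms $\Phi_i$ have an associated truncated Atiyah class which lives in $\Ext^1$ of the relevant $\hom$-complex tensored with $\mathbb{L}_{\S{\n}_\betab}$, and dualizing gives $\phi$. (2) \emph{Verify $\phi$ is a perfect obstruction theory}, i.e. $h^0(\phi)$ is an isomorphism and $h^{-1}(\phi)$ is surjective. This is the standard check that the $\hom$-complex controls the deformation theory of the data $(Z_i,C_i,\text{inclusions})$ — one compares with the explicit deformations and obstructions of a nested configuration, identifying $h^0$ with the Zariski tangent space and showing obstructions land in $h^1$ of the relevant group. (3) \emph{Compute the virtual dimension} as $\operatorname{rank} E^{\bullet}$ via Riemann-Roch / Hirzebruch-Riemann-Roch on the surface $S$: the rank is $\chi$ of the relevant sheaf-theoretic complex on $S$, and plugging in Chern characters of ideal sheaves and line bundles associated to $\beta_i$ yields $n_1+n_r+\tfrac12\sum_{i=1}^{r-1}\beta_i\cdot(\beta_i-c_1(\omega_S))$. (4) \emph{Invoke Behrend-Fantechi} to obtain $[\S{\n}_\betab]^{\vir}\in A_d(\S{\n}_\betab)$, which is Corollary \ref{virclass}.

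The main obstacle I anticipate is step (2): constructing the right complex whose $\dR\pr_{*}\dR\hom$ genuinely governs the \emph{simultaneous} deformations of all the ideal sheaves, all the divisor classes, and all the inclusion morphisms, without over- or under-counting. The subtlety is that deforming $\I_i$ and deforming $C_i$ are not independent once the inclusion $\I_i(-C_i)\subseteq\I_{i+1}$ is imposed, and one must isolate the correct "trace-free" or "reduced" piece — for instance, the deformations of line bundles contribute $H^1(\O_S)$ and $H^2(\O_S)$ pieces that must be handled so that the resulting theory is perfect (two-term) rather than merely having amplitude in $[-1,1]$. I expect the paper resolves this by carefully choosing the cone construction so that these unwanted pieces cancel against the trace parts of the $\hom$-complexes of the $\I_i$, exactly as in the Hilbert-scheme-of-points and stable-pairs cases, which should emerge as the special cases $r=1$ (formally) and $r=2$, $n_2=0$ respectively. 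A secondary technical point is checking functoriality/base-change so that the construction works in families and genuinely produces a class in the Chow group, but this is routine once the complex is in hand.
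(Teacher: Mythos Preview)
Your overall strategy---build a perfect obstruction theory from the Atiyah class of the universal maps $\Phi_i$, verify the Behrend--Fantechi criteria, compute the rank by Riemann--Roch---is exactly what the paper does, and your anticipation of the trace-free cancellation issue is on target. But you are missing the paper's central structural maneuver, which is what makes steps (1) and (2) tractable.

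The paper does \emph{not} construct the absolute obstruction theory directly. Instead, for $r=2$, it first builds a \emph{relative} perfect obstruction theory for the forgetful morphism $\p\colon \S{n_1,n_2}_\beta \to \S{n_1}$, namely
\[
F^\bullet_{\operatorname{rel}} \;=\; \dR\hom_\pi\bigl(\coker(\Phi),\,\i{n_2}_\beta\bigr)^\vee[-1],
\]
coming from Illusie's deformation theory of the morphism $\Phi$ \emph{with source $\i{n_1}$ held fixed}. The perfectness check (amplitude $[-1,0]$) then reduces to vanishing of two specific $\Ext$-groups at closed points, both of which follow from the fact that $\coker(\phi)$ is at most one-dimensional. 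The map to the relative cotangent complex $\LL_\p$ is induced directly by the Atiyah class $\at(\Phi)$. Only afterward is the absolute theory obtained as the cone of $F^\bullet_{\operatorname{rel}} \to \p^*\Omega_{\S{n_1}}[1]$, using that $\S{n_1}$ is smooth---and it is in this cone step that the trace-free part $[\dR\hom_\pi(\i{n_1},\i{n_1}) \oplus \dR\hom_\pi(\i{n_2},\i{n_2})]_0$ emerges naturally, rather than being imposed by hand. Your formula with the subscript~$_0$ on $\dR\hom(\I_1(-\mathcal C_1)\to\I_2,\,\I_2)$ conflates the relative and absolute pictures.

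For general $r$, your ``iterating or taking a mapping cone over the chain'' is in the right spirit, but the actual content is an induction on $r$: one factors $\S{\n}_\betab \xrightarrow{f_2} \S{\n'}_{\beta_1} \xrightarrow{f_1} \S{n_1}$ and proves a compatibility lemma showing that the Atiyah-class-induced relative obstruction theories for $f_1$ and $f_2$ fit into a commutative square with the transitivity triangle of cotangent complexes. This is the technical heart (Lemma~\ref{commut} in the paper) and requires tracking graded cotangent complexes through the chain of inclusions; it is not automatic from the $r=2$ case.
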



For $r\ge 2$ and $\beta_i=0$, $\S{\n}:=\S{\n}_{(0,\dots,0)}$ is the nested Hilbert scheme of points on $S$ parameterizing flags of 0-dimensional subschemes $Z_r\subseteq \dots \subseteq Z_2\subseteq Z_1\subset S$.  $\S{\n}$ is in general singular of actual dimension $2n_1$. 

We are specifically interested in the case $r=2$ in this paper: $\S{\n}_\betab=\S{n_1,n_2}_\beta$ for some $\beta\in H^2(S,\ZZ)$. The cases $S_\beta:=\S{0,0}_\beta$ and $\S{0,n_2}_\beta$ (when $\beta\neq 0$) are respectively isomorphic to the Hilbert scheme of divisors and the moduli  space of stable pairs in classes $\beta$.  The comparison of the virtual class of Theorem \ref{thm1} to other known virtual classes is carried out in  Proposition \ref{thm1.2}.

In certain cases, we construct a \emph{reduced} virtual fundamental class  $$[\S{n_1,n_2}_\beta]^{\vir}_{\red} \in A_{n_1+n_2+\frac{1}{2}\beta\cdot(\beta-c_1(\omega_S))+p_g(S)}(\S{n_1,n_2}_\beta)$$ by reducing the perfect obstruction theory in Theorem \ref{thm1} (Propositions \ref{vanish}, \ref{reduced}). The reduced virtual fundamental class $[\S{0,n_2}_\beta]^{\vir}_{\red}$ match with the reduced virtual fundamental class of stable pair theory constructed in \cite{KT14} (Proposition \ref{thm1.2}). 

\subsection{Punctual nested Hilbert schemes} Sections \ref{sec:nestpoints} and \ref{sec:co} are devoted to study of the nested Hilbert schemes of points $$\S{n_1\ge n_2}:=\S{n_1,n_2}_{\beta=0}$$ in more detail. Let  $$\iota: \S{n_1\ge n_2}\hookrightarrow \S{n_1}\times \S{n_2}$$ be the natural inclusion. If $S$ is toric with the torus $\dT$ and the fixed set $S^\dT$, in Section \ref{sec:toric} we provide a purely combinatorial formula for computing $[\S{n_1\ge n_2}]^{\vir}$ by torus localization along the lines of \cite{MNOP06}. Let $d$ be a positive integer, by a partition $\mu$ of $d$, denoted by  $\mu \vdash d$, we mean a finite sequence of positive integers  
$$\mu=(\mu_1\ge \mu_2 \ge \mu_3\ge \dots) \quad \text{such that}\quad  d=\sum_i \mu_i.$$ The number of $\mu_i$'s is called the length of the partition $\mu$, and is denoted by $\ell(\mu)$. If $\mu'\vdash d'$ with $d'\le d$, we say $\mu' \subseteq \mu$ if $\ell(\mu')\le \ell(\mu)$ and  $\mu'_i \le \mu_i$ for all $1\le i\le \ell(\mu')$. 

\begin{theorem} \label{thm1.7}
For a toric nonsingular surface $S$ the $\dT$-fixed set of $\S{n_1,n_2}$ is isolated and in bijective correspondence by the tuples of nested partitions: $$\left\{(\mu'_{P} \subseteq \mu_{P})_{P \in S^\dT}\mid  \mu'_{P} \vdash d'_P, \quad \mu_{P} \vdash d_P, \quad n_2=\sum_P d'_P, \quad n_1=\sum_P d_P \right\}.$$
Moreover, the $\dT$-character of the virtual tangent bundle $\T^{\vir}$ of $\S{n_1\ge n_2}$ at the fixed point $Q=(\mu'_{P} \subseteq \mu_{P})_{P\in S^{\dT}}$ is given by  $$ \tr_{\cT^{\vir}_{Q}}(t_1,t_2)=\sum_{P\in S^{\dT}} \mathsf{V}_P,$$ where $t_1, t_2$ are the torus characters and $\mathsf{V}_P$ is a Laurent polynomial in $t_1, t_2$ that is completely determined by the partitions $\mu'_{P}$ and $\mu_{P}$ and is given by the right hand side of formula \eqref{virtan}. 
\end{theorem}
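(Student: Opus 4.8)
The plan is to compute the $\dT$-fixed locus and the virtual tangent space from the perfect obstruction theory underlying Theorem \ref{thm1}. First I would identify the $\dT$-fixed points. Since $\beta=0$, a point of $\S{n_1,n_2}$ is a pair of ideal sheaves $I_{Z_1}\subseteq I_{Z_2}$, i.e. $Z_2\subseteq Z_1$ with $\operatorname{len} Z_i=n_i$. A $\dT$-fixed such pair is supported on $S^\dT$, and near each fixed point $P$ the pair is a monomial ideal, which is encoded by a Young diagram; the inclusion $Z_2\subseteq Z_1$ translates exactly into the containment of partitions $\mu'_P\subseteq\mu_P$ in the sense defined just above the statement. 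This gives the asserted description of $(\S{n_1,n_2})^\dT$ as tuples $(\mu'_P\subseteq\mu_P)_{P\in S^\dT}$ with $\mu'_P\vdash n_2$, $\mu_P\vdash n_1$; in particular the fixed locus is a finite set of reduced points, since the only continuous moduli (moving the points in $S$) are rigidified by the torus.

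Next I would write down the virtual tangent space as a $\dT$-representation. By the construction of the obstruction theory (Propositions \ref{abs-r}, \ref{abs}), the virtual tangent complex at a point is built out of the deformation/obstruction spaces of the map $I_{Z_1}\to I_{Z_2}$ of coherent sheaves on $S$, following Illusie; concretely it is a cone involving $\dR\Hom_S(I_{Z_1},I_{Z_1})_0$, $\dR\Hom_S(I_{Z_2},I_{Z_2})_0$ and $\dR\Hom_S(I_{Z_1},I_{Z_2})$, with appropriate shifts and the traceless/reduced-by-$\O_S$ corrections. Because the whole construction is $\dT$-equivariant, at a fixed point $Q$ this complex carries a $\dT$-action and its $K$-theory class $\T^{\vir}_Q$ is a virtual $\dT$-representation whose character is computed by localizing each $\dR\Hom_S$ term. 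The surface being toric, $S=\bigcup_P U_P$ with $U_P\cong\C^2$, and by the standard Čech argument (cf. \cite{MNOP06}) $\tr_{\cT^{\vir}_Q}$ is a sum of local contributions $\mathsf{V}_P$, one for each fixed point $P\in S^\dT$, each depending only on the local data $\mu'_P\subseteq\mu_P$ and on the tangent weights $t_1,t_2$ of $U_P$.

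The remaining and most technical step is to obtain the explicit Laurent polynomial $\mathsf{V}_P$ in formula \eqref{virtan}. On $\C^2$ with coordinates of weights $t_1,t_2$, the $\dT$-character of the coordinate ring is $\frac{1}{(1-t_1)(1-t_2)}$, and a monomial ideal $I_\mu$ has character $Q_\mu:=\sum_{(a,b)\in\mu}t_1^at_2^b$ subtracted; the local $\Ext$ groups $\chi(I_\mu,I_\nu)$ on $\C^2$ have the well-known Euler-characteristic generating function. One then assembles $\mathsf{V}_P$ from $\chi_{\C^2}(I_{\mu_P},I_{\mu_P})$, $\chi_{\C^2}(I_{\mu'_P},I_{\mu'_P})$ and $\chi_{\C^2}(I_{\mu_P},I_{\mu'_P})$ with the signs and shifts dictated by the cone defining $\T^{\vir}$, and simplifies using $1-(1-t_1)(1-t_2)\overline{(\cdots)}$-type identities so that all the $\frac{1}{(1-t_1)(1-t_2)}$ denominators cancel, leaving a genuine Laurent polynomial (this cancellation is forced since $\S{n_1,n_2}$ is proper and the fixed points are isolated). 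The main obstacle is bookkeeping: making sure the reductions by $\O_S$ (the "$0$" subscripts) and the possible extra twist are incorporated correctly so that the final $\mathsf{V}_P$ has the right rank, namely $\dim\T^{\vir}$ matches $d$ from Theorem \ref{thm1} after specialization, and matches formula \eqref{virtan} on the nose. Once $\mathsf{V}_P$ is written down, summing over $P\in S^\dT$ gives $\tr_{\cT^{\vir}_Q}(t_1,t_2)=\sum_P\mathsf{V}_P$, completing the proof.
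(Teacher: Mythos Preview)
Your proposal is correct and follows essentially the same approach as the paper: identify the $\dT$-fixed points as tuples of nested monomial ideals at the toric fixed points, read off the virtual tangent space from the cone defining $\Fbv$ in Proposition~\ref{abs} as the combination $-\chi(I_1,I_1)-\chi(I_2,I_2)+\chi(I_1,I_2)+\chi(R,R)$, and compute the character on each affine chart $U_P\cong\C^2$ via Poincar\'e polynomials as in \cite{MNOP06}, then sum over $P\in S^\dT$. The only ingredient you leave implicit is the paper's Lemma~\ref{nofixed}, which verifies that the relevant $\Ext^2$ groups vanish and the $\Ext^1$ groups contain no trivial subrepresentations, ensuring the $K$-theory class of the virtual tangent space is exactly the alternating sum you write down with no correction from fixed parts.
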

The bijection in Theorem \ref{thm1.7} is deduced from the standard bijection between the set of the fixed points of punctual Hilbert schemes on the affine plane and the set of monomial ideals of finite colegnths in the polynomial rings in two variables. 

Let $\I_1, \I_2$ be the universal ideal sheaves on $S\times \S{n_1}\times \S{n_2}$, and let $\pi$ be the projection to the last two factors $\S{n_1}\times \S{n_2}$ and $p$ be the projection to $S$. Following \cite{CO12}, for any line bundle $M$ on $S$,  we define a $K$-theory class on $\S{n_1}\times \S{n_2}$  by $$\sE^{n_1,n_2}_M:=\sum_{i=0}^2(-1)^i\big(H^i(S,M)\otimes\O- \ext^i_{\pi}(\I_1,\I_2\otimes p^*M)\big).$$ If $M$ is trivial we drop it from the notation (see Definition \ref{virbdl}). When $S$ is toric, by torus localization, we can express $[\S{n_1\ge n_2}]^{\vir}$ in terms of the fundamental class of the product of Hilbert schemes $\S{n_1}\times \S{n_2}$ (Proposition \ref{nestprodfano}):

\begin{theorem} \label{thm2} If $S$ is a nonsingular projective toric surface, then, $$\iota_*[\S{n_1\ge n_2}]^{\vir}=[\S{n_1}\times \S{n_2}]\cap c_{n_1+n_2}(\sE^{n_1,n_2}).$$ 
\end{theorem}

Theorem \ref{thm2} holds in particular for $S=\mathbb{P}^2,\; \mathbb{P}^1\times \mathbb{P}^1$, which are the generators of the cobordism ring of nonsingular projective surfaces. Similar formulas was worked by A. Negut and others (see \cite{N12, N17} and the references within). We use a refinement of this fact together with a degeneration formula developed for $[\S{n_1\ge n_2}]^{\vir}$ (Proposition \ref{degen}) to prove that for \emph{any} nonsingular projective surface $S$, certain integrals against $[\S{n_1\ge n_2}]^{\vir}$ can be expressed as integrals against $\S{n_1}\times \S{n_2}$  (Corollary \ref{cor:znzp}, Proposition \ref{genznzp}). A generalization of such formulas have been recently worked out in \cite{GT17, GT19} using degeneracy loci techniques.

This last result (Proposition \ref{genznzp}) is used in \cite{GSY17b} to express some of the reduced localized DT invariants of $S$ as sums of integrals over the product of Hilbert schemes of points on $S$, when $S$ is one of five types generic complete intersections $$(5)\subset \mathbb{P}^3, \; (3,3)\subset \mathbb{P}^4,\; (4,2)\subset \mathbb{P}^4, \; (3,2,2)\subset \mathbb{P}^5,\; (2,2,2,2)\subset \mathbb{P}^6.$$ Such integrals also have applications in evaluating Vafa-Witten invariants recently defined in \cite{TT17}.

The operators $$\int_{\S{n_1}\times \S{n_2}}- \cup c_{n_1+n_2}(\sE^{n_1,n_2}_M)$$ were studied by Carlsson-Okounkov in \cite{CO12}. They expressed these operators in terms of explicit vertex operators. Using this, we prove the following explicit formula (Proposition \ref{coformul}):
\begin{theorem} \label{thm4}  Let $S$ be a nonsingular projective surface, $\omega_S$ be its canonical bundle, and $K_S=c_1(\omega_S)$. Then,
\begin{align*}\sum_{n_1\ge n_2\ge 0}(-1)^{n_1+n_2}&\int_{[\S{n_1\ge n_2}]^{\vir}} \iota^*c(\sE^{n_1,n_2}_M)q_{1}^{n_1}q_2^{n_2}=\\&\prod_{n> 0}\left(1- q_2^{n-1}q_1^n\right)^{\langle K_S, K_S-M \rangle}\left(1- q_1^nq_2^{n}\right)^{\langle K_S-M, M \rangle-e(S)},
\end{align*}
where $\langle-,-\rangle$ is the Poincar\'e paring on $S$.
\end{theorem}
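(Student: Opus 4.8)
The plan is to deduce the formula from Theorem~\ref{thm3} together with the Ext vertex operator evaluation of Carlsson--Okounkov \cite{CO12}. The first step is to apply Theorem~\ref{thm3} with the insertion $\alpha_M^{n_1,n_2}:=c(\sE^{n_1,n_2}_M)$, the total Chern class of the twisted relative extension class. Its two hypotheses are easy to verify: $\sE^{n_1,n_2}_M$ is built functorially from the universal ideal sheaves $\i{n_1},\i{n_2}$ and the fixed line bundle $M$, so $\alpha_M^{n_1,n_2}$ is universally defined for every pair $(S,M)$; and, since the relative $\ext$-sheaves satisfy cohomology and base change, the compatibility of $\iota^*\alpha_M^{n_1,n_2}$ with good degenerations of $S$ in the sense of Remark~\ref{gznzp} reduces to the (known) degeneration behaviour of $\i{n_1},\i{n_2}$ and of $M$, taking $M$ to extend over the degenerating family. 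Theorem~\ref{thm3} then gives
\begin{equation*}
\int_{[\S{n_1\ge n_2}]^{\vir}}\iota^*c(\sE^{n_1,n_2}_M)=\int_{\S{n_1}\times\S{n_2}}c(\sE^{n_1,n_2}_M)\cup c_{n_1+n_2}(\sE^{n_1,n_2}).
\end{equation*}
Since $\S{n_1}\times\S{n_2}$ has complex dimension $2(n_1+n_2)$ and $c_{n_1+n_2}(\sE^{n_1,n_2})$ lies in degree $2(n_1+n_2)$, only the degree $2(n_1+n_2)$ part of $c(\sE^{n_1,n_2}_M)$ contributes; up to the trivial summand coming from $R\Gamma(M)$, which does not affect Chern classes, this part is the Euler class of the rank $n_1+n_2$ Ext $K$-theory class in the normalization of \cite{CO12}, and the same applies to $\sE^{n_1,n_2}=\sE^{n_1,n_2}_{\O_S}$. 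Hence the right hand side is an integral over $\S{n_1}\times\S{n_2}$ of a product of two Euler classes of Ext-type classes, exactly the kind of integral that the Ext vertex operators of \cite{CO12} compute.

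The second step is to bring this product into the form used in \cite{CO12}. By relative Serre duality over the surface factor, $\sE^{n_1,n_2}_{\O_S}$ is the $K$-theory dual of the pullback under the swap of factors of $\sE^{n_2,n_1}_{\omega_S}$, so $c_{n_1+n_2}(\sE^{n_1,n_2}_{\O_S})=(-1)^{n_1+n_2}c_{n_1+n_2}(\sE^{n_2,n_1}_{\omega_S})$. The sign $(-1)^{n_1+n_2}$ thus produced cancels the sign in the statement, and this is the step that makes $\omega_S$, and hence $K_S$, appear in the answer. Therefore the left hand side of the theorem equals
\begin{equation*}
\sum_{n_1\ge n_2\ge 0}q_1^{n_1}q_2^{n_2}\int_{\S{n_1}\times\S{n_2}}c_{n_1+n_2}(\sE^{n_1,n_2}_M)\cup c_{n_1+n_2}(\sE^{n_2,n_1}_{\omega_S}).
\end{equation*}

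The core of the argument is then to recognize the right hand side as a vacuum matrix element of a product of two of the Ext vertex operators of \cite{CO12}, one attached to the line bundle $M$ and one to $\omega_S$, and to evaluate it by their explicit product formula. This yields an infinite product indexed by the two ``channels'' of the two-point function, $q_1^nq_2^{n-1}$ and $q_1^nq_2^n$, with exponents given by Poincar\'e pairings of $c_1(M)$ and $K_S$ together with an Euler characteristic contribution $e(S)=\int_S c_2(T_S)$ coming from the Todd class in the Chern character computation. Specializing the parameters and reading off the two channels produces the exponents $\langle K_S,K_S-M\rangle$ and $\langle K_S-M,M\rangle-e(S)$; the resulting product only involves monomials $q_1^{n_1}q_2^{n_2}$ with $n_1\ge n_2$, which is consistent with the range of summation and in particular forces the $\S{n_1}\times\S{n_2}$-integrals with $n_1<n_2$ to vanish. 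The main obstacle will be the bookkeeping of translating the normalizations, directions and sign conventions of \cite{CO12} into those of the present paper, and of extracting the precise exponents from their closed form; checking the hypotheses of Theorem~\ref{thm3} and the Serre duality identity is routine by comparison.
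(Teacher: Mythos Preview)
Your overall strategy matches the paper's: first apply Theorem~\ref{thm3} (the paper does this via Corollary~\ref{cor:znzp}, noting $\iota^*c(\sE^{n_1,n_2}_M)=c(\sK^{[n_1\ge n_2]}_M)$), then use relative Serre duality to produce the sign $(-1)^{n_1+n_2}$ and a second Ext class, and finally invoke Carlsson--Okounkov. The first two steps are fine.

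The genuine gap is in the third step. The generating series you obtain is \emph{not} a vacuum matrix element of a product of two $W$-operators: a vacuum expectation $\langle 1,\,W(M_1,M_2)\,1\rangle$ would force $n_1=0$ on both ends and produce only the single prefactor $(1-z_1/z_2)^{\langle M_1,M_2^D\rangle}$, not an infinite product. What actually arises is the \emph{super-trace}
\[
\str\big(q^{\sN}\circ W(\O_S,M^D)(z_1,z_1^{-1})\big),
\]
with $q_1=qz_1^{-2}$, $q_2=z_1^2$; here the diagonal summation implicit in the trace is exactly what accounts for both variables $q_1,q_2$. The infinite product is then obtained by writing $W(\O_S,M^D)$ via Theorem~\ref{CO} as a normal-ordered product of $\Gamma_\pm$'s, using the cyclicity $\str(AB)=\str(BA)$ together with $q^{\sN}\Gamma_-(L,z)=\Gamma_-(L,qz)q^{\sN}$ to iterate the commutation relation $\Gamma_+\Gamma_-=(1+z_1/z_2)^{\langle\cdot,\cdot\rangle}\Gamma_-\Gamma_+$, and finally using G\"ottsche's formula for $\str(q^{\sN})$. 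Cyclicity of the trace is precisely what makes this iteration converge to an infinite product; there is no analogous mechanism for a vacuum matrix element. Once you replace ``vacuum matrix element'' by ``super-trace'' and carry out this computation, the exponents $\langle K_S,K_S-M\rangle$ and $\langle K_S-M,M\rangle-e(S)$ fall out immediately from the specialization $M_1=\O_S$, $M_2=M^D$.
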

This is the only situation that we have been able to find a closed formula for the complete generating series of invariants. It would be interesting to seek similar formulas for the more involved integrals over nested Hilbert schemed that appear in Vafa-Witten theory or reduced local DT theory of $S$.

\section*{Aknowledgement}
We are grateful to Richard Thomas for providing us with many valuable comments. We would like to thank Eric Carlsson, Andrei Negut, Davesh Maulik, Hiraku Nakajima, Takur\={o} Mochizuki, Alexey Bondal and Mikhail Kapranov for useful discussions. 

A. G. was partially supported by NSF grant DMS-1406788. A. S. was partially supported by World Premier 
International Research Center Initiative (WPI initiative), MEXT, Japan, as well as NSF DMS-1607871, NSF DMS-1306313 and Laboratory of Mirror Symmetry NRU HSE, RF Government grant, ag. No 14.641.31.0001. S.-T. Y. was partially supported by NSF DMS-0804454, NSF PHY-1306313, and Simons 38558. A. S. would like to further sincerely thank the center for Quantum Geometry of Moduli Spaces at Aarhus University, the Center for Mathematical Sciences and Applications at Harvard University and the Laboratory of Mirror Symmetry in Higher School of Economics, Russian federation, for the great help and support. 

\subsection{Notation and conventions} \label{notcon}
\begin{enumerate}
\item We will use the symbol $\mathbb L^{\bullet}$ for the (full) cotangent complex, and  
 $\mathbb{L}^{\bullet,\text{gr}}$ for the cotangent complex for the sheaves of graded algebras. Our sheaves of graded algebras are always of the form $A_0\oplus A_1$ where $A_i$ is in degree $i$. If $M$ is a complex of graded $(A_0\oplus A_1)$-modules then $k^i(M)$ takes the degree $i$ part in the grading. If $$A_0\oplus A_1\to B_0\oplus B_1, \qquad A_0\oplus A_1\xrightarrow{\id \oplus s} A_0\oplus C_1$$ are graded homomorphisms of sheaves of graded algebras in which $s$ is injective then, we will use the following isomorphisms proven in \cite[IV.2.2.4), IV.2.2.5, IV.3.2.10]{Ill} 
$$k^0\left(\mathbb{L}^{\bullet,\text{gr}}_{(B_0\oplus B_1)/(A_0\oplus A_1)}\right)\cong \LL_{B_0/A_0}, \quad k^1\left(\mathbb{L}^{\bullet,\text{gr}}_{(A_0\oplus C_1)/(A_0\oplus A_1)}\right)\cong \coker(s).$$
Associated, to graded homomorphisms of graded sheaves of  algebras  $$A_0\oplus A_1\to B_0\oplus B_1\to C_0\oplus C_1$$  is a natural exact triangle $$\mathbb{L}^{\bullet,\text{gr}}_{(B_0\oplus B_1)/(A_0\oplus A_1)}\otimes_{(B_0\oplus B_1)}(C_0\oplus C_1) \to \mathbb{L}^{\bullet,\text{gr}}_{(C_0\oplus C_1)/(A_0\oplus A_1)}\to\mathbb{L}^{\bullet,\text{gr}}_{(C_0\oplus C_1)/(B_0\oplus B_1)}$$ 
that is referred to as the \emph{transitivity triangle} \cite[IV.2.3]{Ill}.
\item We will denote the universal ideal sheaves of $\S{m}_{\beta}$, $\S{m}$, and $S_\beta$ respectively by $\i{m}_{-\beta}$, $\i{m}$, and $\I_{-\beta}$, and the corresponding universal subschemes respectively by $\Z^{[m]}_\beta$, $\Z^{[m]}$, and $\Z_\beta$. We will also write $\i{m}_{\beta}$ for $\i{m}\otimes \O(\Z_\beta)$. Using the universal property of the Hilbert scheme, it can be seen that $\i{m}_{-\beta}\cong \i{m}\otimes\O(-\Z_\beta)$. 
\item Let $\pi:S\times \S{\n}_\betab\to \S{\n}_\betab$ be the projection, we denote the derived functor $\dR\pi_*\dR\hom$ by $\dR\hom_\pi$ and its $i$-th cohomology sheaf by $\ext^i_\pi$. $\pi$ is a smooth morphism of relative dimension 2 and hence by Grothendieck-Verdier duality $\pi^!(-):=\pi^*(-)\otimes \omega_{\pi}[2]$ is a right adjoint of $\dR \pi_*$.

\item Throughout the paper, we slightly abuse notation and suppress many of the symbols $f^*$ and $f^{-1}$ for the pullback of sheaves on $Y$ via a given morphism $f:X\to Y$ of schemes. This makes most of the formulas notationally lighter and hence more readable.

\item For any line bundle $L$ on $S$ we define $L^D:=L^{-1}\otimes \omega_S$. Similarly, for any class $\beta\in H^2(S,\ZZ)$ we define $\beta^D:=K_S-\beta$.
\end{enumerate}

\section{Nested Hilbert schemes on surfaces} \label{sec:general}
Let $S$ be a nonsingular projective surface over $\C$. We denote the canonical line bundle on $S$ by $\omega_S$ and $K_S:=c_1(\omega_S)$. 
For any nonnegative integer $m$ and effective curve class $\beta \in H^2(S,\ZZ)$, we denote by $\S{m}_\beta$ the Hilbert scheme of 1-dimensional subschemes $Z\subset S$ such that $$[Z]=\beta,\quad c_2(I_Z)=m.$$ If $\beta=0$ we drop it from the notation and denote by $\S{m}$ the Hilbert scheme of $m$ points on $S$.  Similarly, in the case $m=0$ but $\beta\neq 0$ we drop $m$ from the notation and use $S_\beta$ to denote the Hilbert scheme of curves in class $\beta$. There are natural morphisms $$\div:\S{m}_\beta \to S_\beta, \quad \det:\S{m}_\beta \to \pic(S), \quad \pts:\S{m}_\beta \to \S{m},$$ where $\div$ sends a 1-dimensional subscheme $Z\subset S$ to its underlying divisor on $S$,  $\det(Z):=\O(\div(Z)),$ and $\pts(Z)$ is the 0-dimensional subscheme of $S$ defined by the ideal $I_Z(\div(Z))$. In fact $\div(-)$ is well-behaved with respect to basechange (see \cite{F69, KM77}) and sends a flat family of 1-dimensional subschemas of $S$ to a flat family of divisors on $S$, and so using the universal property of the Hilbert schemes, these maps are morphisms of schemes.  There is a natural isomorphism of schemes $\S{m}_\beta\cong\S{m}\times S_\beta$ defined by mapping $Z\mapsto ( \pts(Z), \div(Z))$. Under this, we have the following relation among the universal ideal sheaves: $\i{m}_{-\beta}\cong \i{m}\boxtimes \O(-\Z_\beta)$.

It is well known that $\S{m}$ is a nonsingular variety of dimension $2m$. The tangent bundle of $\S{m}$ is identified with 
\begin{equation} \label{tanhilb} T_{\S{m}}\cong \hom_\pi\big(\i{m},\O_{\Z^{[m]}}\big)\cong \dR\hom_{\pi}\big(\i{m},\i{m}\big)_0[1]\cong \ext^1_\pi\big(\i{m},\i{m}\big)_0,\end{equation} where the index 0 indicates the trace-free part.

The main object of study in this paper is the following
\begin{defi} \label{defnest}
Suppose that  $\n:=n_{1},n_{2},\dots,n_{r}$ is a sequence of $r\ge 1$ nonnegative integers, and $\betab:=\beta_1,\dots,\beta_{r-1}$ is a sequence of classes in $H^2(S,\ZZ)$.  The nested Hilbert scheme is the closed subscheme
\begin{equation}\label{nested inclusion} \iota:\S{\n}_{\betab} \hookrightarrow \S{n_1}_{\beta_1}\times\cdots \times  \S{n_{r-1}}_{\beta_{r-1}}\times  \S{n_r} \end{equation} naturally defined by the $r$-tuples $(Z'_1,\dots, Z'_r)$ of subschemes of $S$ such that  $\pts(Z'_i) \subset  Z'_{i-1}$ is a subscheme for all $1<i\le r$. We drop $\betab$ or $\n$ from the notation respectively when $\beta_i=0$ for all $i$ or $n_i= 0, \beta_j\neq 0$ for all $i,j$. The scheme $\S{\n}_\betab$ represents the functor that takes a scheme
$U$ to the set of flat families of ideals $\I_1,\dots, \I_r\subseteq \O_{S\times U}$ and flat families of Cartier divisors $\D_1\dots,\D_{r-1} \subset S\times U$ such that  $$\I_1(-\D_1-\D_2-\cdots-\D_{r-1}) \subseteq  \I_2(-\D_2-\cdots-\D_{r-1})\subseteq \cdots \subseteq \I_{r-1}(-\D_{r-1}) \subseteq \I_r,$$ and on restriction to any closed fiber $\I_i|_{S \times \{u\}}$ has colength $n_i$ and $[\D_i|_{S \times \{u\}}]=\beta_i$.
\end{defi}
As a set we can think of $\S{\n}_{\betab}$ as given by the tuples of subschemes $$(Z_1,\dots, Z_r)\in \S{n_1}\times\cdots \times \S{n_r},\quad (C_1,\dots,C_{r-1})\in S_{\beta_1} \times\cdots \times S_{\beta_{r-1}}$$ together with the nonzero maps $\phi_{i}: I_{Z_{i}}\to I_{Z_{i+1}}(C_{i})$,  up to multiplication by scalars, for all  $1\le i< r.$ Note that each $\phi_i$ is necessarily injective, and taking double dual, it gives (up to a scalar) the tautological section of $\O(C_i)$. In this correspondence, $Z'_r=Z_r$, and for $1\le i <r$ the ideal sheaf of the subscheme $Z'_i$ is $I_{Z_i}(-C_i)$.  


By the construction of nested Hilbert schemes, the maps $\phi_i$ above are induced from the universal maps
$$\Phi_i: \i{n_i}\to \i{n_{i+1}}_{\beta_i} \quad  \quad 1\le i< r$$ defined over $S\times \S{\n}_{\betab}$. 
Applying the functors $\dR\hom_\pi\big (-,\i{n_{i+1}}_{\beta_i}\big)$ and $\dR\hom_\pi\big (\i{n_{i}},-\big)$ to the universal map $\Phi_i$, we get the following morphisms in derived category $$\dR\hom_\pi\big(\i{n_{i+1}}, \i{n_{i+1}}\big ) \xrightarrow{\Xi_i} \dR \hom_\pi\big(\i{n_i}, \i{n_{i+1}}_{\beta_i}\big),$$ $$\dR\hom_\pi\big(\i{n_{i}}, \i{n_{i}}\big ) \xrightarrow{\Xi'_i} \dR \hom_\pi\big(\i{n_i}, \i{n_{i+1}}_{\beta_i}\big).$$ 
Consider the map \begin{align}\label{matr}\bigoplus_{i=1}^r \dR\hom_\pi \big(\i{n_i},\i{n_i}\big) \xrightarrow{\tiny \left[\begin{array}{cccccc}-\Xi'_1 & \Xi_1 & 0& \dots &0 &0\\
0 & -\Xi'_2 & \Xi_2 & 0 & \dots &0\\ 
\cdot & \cdot & \dots & \cdot & \cdot & \cdot \\
\cdot & \cdot & \dots & \cdot & \cdot & \cdot \\
\cdot & \cdot & \dots & \cdot & \cdot & \cdot \\
0 & 0& \dots & 0 & -\Xi'_{r-1} & \Xi_{r-1} \\
 \end{array}\right]} \bigoplus_{i=1}^{r-1}\dR \hom_\pi \big(\i{n_i}, \i{n_{i+1}}_{\beta_i}\big).\end{align}
We will show that this map factors through the trace free part \begin{align*}\left[\bigoplus_{i=1}^r \dR\hom_\pi \left(\i{n_i},\i{n_i}\right)\right]_0:&=\cone\left(\bigoplus_{i=1}^r\dR\hom_\pi \big(\i{n_i},\i{n_i}\big)\xrightarrow{[\tr \;\dots\;\tr]} \dR\pi_*\O \right)[-1]\\ &\cong\cone\left(\dR\pi_*\O \xrightarrow{[\id \; \dots\; \id]^t} \bigoplus_{i=1}^r\dR\hom_\pi \big(\i{n_i},\i{n_i}\big) \right)
.\end{align*}

The following proposition that implies Theorem \ref{thm1} is proven in Section \ref{r-step}:
\begin{prop} \label{abs-r}
$\S{\n}_\betab$ is equipped with a perfect absolute obstruction theory $\Fb\to \LL_{\S{\n}_\betab}$ whose virtual tangent bundle is given by\begin{align*}&\Fbv\cong \cone \left(\left[\bigoplus_{i=1}^r \dR\hom_\pi \big(\i{n_i},\i{n_i}\big)\right]_0 \to \bigoplus_{i=1}^{r-1}\dR \hom_\pi \big(\i{n_i}, \i{n_{i+1}}_{\beta_i}\big)\right),\end{align*}
where the map is the one defined above.\footnote{The negative signs on the diagonal of the matrix in \eqref{matr} were missing in the first draft of the paper. We were notified of the corrected form of the matrix by Richard Thomas.}
\end{prop}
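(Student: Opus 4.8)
The plan is to construct the obstruction theory via Illusie's deformation theory of morphisms of sheaves, applied to the universal maps $\Phi_i\colon \i{n_i}\to \i{n_{i+1}}_{\beta_i}$. First I would recall the general principle: given a morphism $\phi\colon E\to F$ of perfect complexes flat over a base, there is a natural complex (the ``cone of the cone'') controlling the simultaneous deformations of $E$, $F$, and $\phi$; in our situation the relevant deformations of $\i{n_i}$ are unobstructed and parametrized by $\S{n_i}$ (or $\S{n_i}_{\beta_i}$), while the constraint comes from requiring the maps to persist. Concretely, I would assemble the absolute theory over $\S{\n}_\betab$ by starting from the (shifted, trace-free) Atiyah classes of the universal ideal sheaves on each factor, which give the obstruction theories of the smooth ambient Hilbert schemes, and then forming the mapping cone of \eqref{matr} restricted to the trace-free part. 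The claim is that this cone, call it $\Fbv$ after dualizing, receives a morphism to the cotangent complex $\dL_{\S{\n}_\betab}$ making it a perfect obstruction theory.

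The key steps, in order, are: (1) show the map \eqref{matr} factors through the trace-free part $[\bigoplus_i \dR\hom_\pi(\i{n_i},\i{n_i})]_0$ — this is a compatibility check using that $\Phi_i$ is a genuine map of sheaves (not just in the derived category) so the induced maps $\Xi_i,\Xi_i'$ are compatible with traces, together with the fact that $\tr\circ\Xi_i$ and $\tr\circ\Xi_i'$ agree (both compute the trace on $\dR\hom_\pi(\i{n_i},\i{n_{i+1}}_{\beta_i})$ via $\Phi_i$), so their difference annihilates the image of $\dR\pi_*\O$; (2) identify the perfect amplitude: each $\dR\hom_\pi(\i{n_i},\i{n_i})_0$ has amplitude $[1,2]$ after the shift coming from Hilbert-scheme smoothness, and each $\dR\hom_\pi(\i{n_i},\i{n_{i+1}}_{\beta_i})$ has amplitude $[0,2]$ on a surface, so the cone lands in $[-1,0]$ after dualizing — this gives the perfectness and, via Riemann-Roch, the virtual dimension $d$; (3) construct the morphism to $\dL_{\S{\n}_\betab}$: here I would use that $\S{\n}_\betab$ sits inside the smooth ambient product $\prod \S{n_i}_{\beta_i}\times \S{n_r}$ via $\iota$, compute $\dL_\iota$, and check that the Atiyah-class construction of Illusie for the pair $(\Phi_i)$ naturally maps the cone to the cone defining $\dL_{\S{\n}_\betab}$ relative to the ambient space; the base-change identities of Remark \ref{bch} let one pass between $\pi$ and $\pi'$ freely; (4) verify the obstruction-theory axioms (isomorphism on $h^0$, surjection on $h^{-1}$) by a local analysis, reducing via the long exact sequence of the cone to the known fact that the ambient Hilbert schemes are smooth and that the ``extra'' equations cutting out $\S{\n}_\betab$ are exactly the vanishing of the relevant $\Ext^0$/$\Ext^1$ classes.

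I expect the main obstacle to be step (3): producing a canonical morphism from $\Fbv$ to $\dL_{\S{\n}_\betab}$ that is compatible with all the structural maps, rather than just an abstract perfect complex of the right rank. The subtlety is that $\S{\n}_\betab$ is cut out inside the smooth product by the condition that the composite $\i{n_i}\to\O_{\Z^{[n_i]}}\to\O_{\Z^{[n_{i+1}]}}(C_i)$ type data glue into actual subscheme inclusions, and one must check that Illusie's functoriality for the Atiyah class of a morphism of complexes produces precisely the comparison map to $\dL$ and not merely to a quotient or subobject. I would handle this by working relatively: first establish the relative obstruction theory of $\S{\n}_\betab$ over $\S{n_1}_{\beta_1}\times\cdots\times\S{n_r}$ (where the ambient deformations are frozen and only the maps $\Phi_i$ deform), using the truncated cotangent complex of the universal morphism, then combine with the tautological (trivial, since smooth) obstruction theory of the base via the standard triangle $\iota^*\dL_{\mathrm{ambient}}\to\dL_{\S{\n}_\betab}\to\dL_{\S{\n}_\betab/\mathrm{ambient}}$. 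This reduces the global problem to a relative one where Illusie's machinery applies cleanly. The remaining steps (1), (2), (4) are essentially bookkeeping: trace compatibility, amplitude estimates on a surface, and the standard criterion of Behrend-Fantechi.
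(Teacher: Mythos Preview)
Your overall strategy---use Illusie's deformation theory of morphisms and Atiyah classes to produce the map to the cotangent complex---is the same as the paper's, but the specific route you propose in step (3) has a genuine gap.

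\medskip

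\textbf{The ambient product is not smooth.} You propose to work relative to the full product $\prod_i \S{n_i}_{\beta_i}\times \S{n_r}$ and then invoke the triangle $\iota^*\LL_{\mathrm{ambient}}\to\LL_{\S{\n}_\betab}\to\LL_\iota$ together with smoothness of the base. But $\S{n_i}_{\beta_i}\cong \S{n_i}\times S_{\beta_i}$, and the Hilbert scheme of curves $S_{\beta_i}$ is \emph{not} smooth in general: when $q(S)=h^1(\O_S)>0$, the map $S_{\beta_i}\to\pic^{\beta_i}(S)$ has fibers $\PP(H^0(L))$ whose dimension jumps with $L$. So ``combine with the tautological (trivial, since smooth) obstruction theory of the base'' does not go through. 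The paper avoids this by working relative to $\S{n_1}$ only---the Hilbert scheme of \emph{points}, which is smooth of dimension $2n_1$---and encoding all of the curve data and the remaining point data in the relative theory.

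\medskip

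\textbf{The relative problem over the product is not what you describe.} Even ignoring smoothness, your description ``the ambient deformations are frozen and only the maps $\Phi_i$ deform'' does not match Illusie's setup. The result you need, \cite[Prop.~IV.3.2.12]{Ill}, controls deformations of a morphism $\phi\colon E\to F$ with the \emph{source} fixed and the \emph{target allowed to vary}; its obstruction space is $\Ext^2(\coker\phi,F)$. If both $E$ and $F$ are frozen, deformations of $\phi$ are just $\Hom(E,F)$, which here is one-dimensional by Lemma~\ref{I1I2} and gives no useful relative obstruction theory. The paper exploits exactly this asymmetry: for $r=2$ it fixes only $\i{n_1}$ and lets $\i{n_2}_\beta$ deform along with $\Phi$ (Proposition~\ref{rel}); for $r\ge 3$ it inducts along the tower of forgetful maps $\S{\n}_\betab\xrightarrow{f_2}\S{\n'}_{\beta_1}\xrightarrow{f_1}\S{n_1}$, and the nontrivial content is the compatibility of the two Atiyah classes under the transitivity triangle for cotangent complexes (Lemma~\ref{commut}). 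That compatibility check is the real work and is absent from your outline.

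\medskip

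\textbf{Step (2) is not bookkeeping.} A cone of a map from a complex of amplitude $[1,2]$ to one of amplitude $[0,2]$ has amplitude $[0,2]$ a priori; to get $[0,1]$ you must show the induced map on $h^2$ is surjective (equivalently, after Serre duality, an injectivity on $\Hom$'s). This is precisely Step~1 of the proof of Proposition~\ref{rel}, and it uses that $\coker(\Phi)$ is at most one-dimensional, i.e.\ genuinely uses the geometry of the situation.
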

\subsection{$2$-step nested Hilbert schemes}  \label{2-step} In this section we study $\S{n_1,n_2}_{\beta}:=\S{\n}_{\betab}$ in the case $r=2$. Recall from Definition \ref{defnest} that for a pair of nonnegative integers $n_1, n_2$ and an effective curve class $\beta \in H^2(S,\ZZ)$, we defined the projective scheme $\S{n_1,n_2}_\beta$, whose set of closed points is given by $$\left\{(Z_1, C, Z_2,\phi) \mid Z_i \in \S{n_i}, \;C\in S_\beta, \;\;  \phi\in \PP(\Hom(I_{Z_1},I_{Z_2}(C))) \right\}.$$  There are universal objects defined over $S\times \S{n_1,n_2}_\beta$ as before: 
$$ \Phi: \i{n_1}\to \i{n_{2}}_\beta.$$ 
  
 Applying the functors $\dR\hom_\pi(-,\i{n_2}_\beta)$ and $\dR\hom_\pi (\i{n_1}, -)$ to the universal map $\Phi$, we get the following morphisms in derived category \begin{align} \label{morph1}&\dR\hom_\pi\big(\i{n_2}, \i{n_2}\big ) \xrightarrow{\Xi}\dR \hom_\pi\big(\i{n_1}, \i{n_2}_\beta\big) \\\notag &\dR\hom_\pi\big(\i{n_1}, \i{n_1}\big) \xrightarrow{\Xi'} \dR \hom_\pi\big(\i{n_1}, \i{n_2}_\beta \big). \end{align}

Let $T$ be any scheme over $\C$-scheme $U$, and let 
\begin{equation*}
\xymatrix@=1em{
T \ar[d]_-a \ar@{^{(}->}[r] & \Tb \ar[ld]^-{\overline{a}}\\
U} 
\end{equation*}  be a square zero extension over $U$ with the ideal $J$. As $J^2=0$, $J$ can be considered as an $\O_{T}$-module. Suppose we have a Cartesian diagram

\begin{equation}  \label{sqzero}
\xymatrix{
T \ar[d]^a \ar[r]^-g & \S{n_1,n_2}_\beta \ar[d]^{\p:=\pts\circ \pr_1}\\
U \ar[r] & \S{n_1},} 
\end{equation} where $\pr_1$ is the composition of $\iota$ with the projection $\S{n_1}_\beta \times \S{n_2}\to \S{n_1}_\beta$. The bottom row of \eqref{sqzero} corresponds to a flat $U$-family $Z_{1,U}\subset S\times U$ of subschemes of length $n_1$, and the top row corresponds to the data  
\begin{equation} \label{Tpoint} (Z_{1,T}, C_T, Z_{2,T},\phi_T),\end{equation}  consisting of $Z_{1,T}=Z_{1,U}\times_{S\times U,(\id,a)} S\times T,$ a $T$-flat family $Z_{2,T}\subset S\times T$  of subschemes of length $n_2$,  a $T$-flat family $C_T\subset S\times T$  of effective Cartier divisors in class $\beta$, and  (up to a scalar)  a homomorphism$$\phi_T\colon I_{Z_{1,T}}\to I_{Z_{2,T}}(C_T).$$ Let $\pi_T$ be the projections from $S\times T\to  T$. By \cite[Prop. IV.3.2.12]{Ill} and \cite[Thm 12.8]{JS12}, there exists an element $$\ob:=\ob( \phi_T,J) \in \Ext^2_{S\times T}\left(\coker(\phi_T), \pi^*_TJ \otimes I_{ Z_{2,T}}(C_T)\right),$$ whose vanishing is necessary and sufficient to extend the $T$-family \eqref{Tpoint} to a  $\Tb$-family \begin{equation} \label{Tbpoint} (Z_{1,\Tb}, C_{\Tb}, Z_{2,\Tb},\phi_{\Tb}) \end{equation} such that $Z_{1,\Tb}=Z_{1,U}\times_{S\times U,(\id,\overline{a})} S\times \Tb$.  In fact by \cite[Prop. IV.3.2.12]{Ill}, $\ob$ is the obstruction to deforming the morphism $\phi_T$ while the deformation $I_{Z_{1,\Tb}}$ of $I_{Z_{1,T}}$  is given. Suppose that $\phi_{\Tb}: I_{Z_{1,\Tb}} \to \F$ is such a deformation, where $ \F$ is a flat family of rank 1 torsion free sheaves with $\F|_{S\times T}=I_{Z_{2,T}}(C_T)$. Then by \cite[Lemma 6.13]{K90} the double dual $\F^{**}$ is an invertible sheaf. Now $\phi_{\Tb}^{**}:\O_{S\times \Tb}\to \F^{**}$ is  injective when restricted to closed fibers of $S\times \Tb\to \Tb$, and hence by \cite[Lemma 2.1.4]{HL10}, $\coker(\phi_{\Tb}^{**})$  is also flat over $\Tb$. Thus, there exists a $\Tb$-flat subscheme $C_{\Tb}\subset S\times \Tb$ (i.e. the Cartier divisor cut out by $\phi_{\Tb}^{**}$) that restricts to $C_{T}$ and $\F^{**}\cong \O(C_{\Tb})$. We conclude from $\F\subseteq \F^{**}$ that $\F\cong I_{Z_{2,\Tb}}(C_{\Tb})$ for some $\Tb$-flat subscheme $Z_{2,\Tb}\subset S\times \Tb$ restricting to $Z_{2,T}$. 

If $\ob = 0$ then by \cite[Prop. IV.3.2.12]{Ill} again, the set of isomorphism classes of deformations forms a torsor under $$\Ext^1_{S\times T}\left(\coker(\phi_T), \pi_T^*J \otimes I_{ Z_{2,T}}(C_T)\right).$$ 
\begin{lem} \label{atred}
$\ob=\at_{\red}(\phi_T)\cup(\pi^*_Te(\Tb)\otimes \id)$, where
  \begin{equation}\label{atiyah}\at_{\red}( \phi_T)\in \Ext^1_{S\times T}\left(\coker(\phi_T),\pi^*_T\LL_{a}\otimes I_{ Z_{2,T}}(C_T)\right),\end{equation}  is a certain reduction of the Atiyah class  $\at_{\O_{S\times T}/\O_{S\times U}}(\phi_T)$ of $\phi_T$ \cite[IV.2.3]{Ill}, and $$\pi^*_Te(\Tb)\otimes \id \in \Ext^1_{S\times T}(\pi^*_T\LL_{a}\otimes I_{ Z_{2,T}}(C_T),\pi^*_T J\otimes I_{ Z_{2,T}}(C_T)),$$ in which 
\begin{equation}\label{cotsqzero} e(\Tb)\in \Ext^1_T(\LL_{a},J)\end{equation} is  the Kodaira-Spence class associated to the square zero extension $T\hookrightarrow \Tb$. 
\end{lem}
\begin{proof}
The proof is the same as the proof of \cite[Thm 12.9]{JS12}: in diagram (12.17) of [ibid], the first vertical arrow needs to  be replaced by $$\at_{\O_{S\times T}/\O_{S\times U}}(\phi_T):\coker(\phi_T)\to k^1\left(\mathbb{L}^{\bullet,\text{gr}}_{(\O_{S\times T}\oplus I_{Z_{1,T}})/\O_{S\times U}}\otimes (\O_{S\times T}\oplus I_{Z_{2,T}}(C_T))\right)[1].$$ This is the degree 1 part of the transitivity triangle (see item 1 in Subsection \ref{notcon}) associated to natural homomorphism of sheaves of graded algebras on $S\times T$ $$\O_{S\times U} \to \O_{S\times T}\oplus  I_{Z_{1,T}}\xrightarrow{[\id\; \phi_T]}\O_{S\times T}\oplus I_{Z_{2,T}}(C_T).$$   
By flatness of $I_{Z_{1,U}}$ over $\O_U$, we see that $\O_{S\times U}\oplus I_{Z_{1,U}}$ is flat over $\O_U$ and hence $$(\O_{S\times U}\oplus I_{Z_{1,U}})\overset{\dL}{\otimes}_{\O_{S\times U}} \O_{S\times T}\cong(\O_{S\times U}\oplus I_{Z_{1,U}})\otimes_{\O_{S\times U}} \O_{S\times T}\cong \O_{S\times T}\oplus I_{Z_{1,T}},$$ so \cite[II.2.2]{Ill} $\mathbb{L}^{\bullet,\text{gr}}_{(\O_{S\times T}\oplus I_{Z_{1,T}})/\O_{S\times U}}$ is isomorphic to $$
\Big( \mathbb{L}^{\bullet}_{\O_{S\times T}/\O_{S\times U}}\otimes(\O_{S\times T}\oplus I_{Z_{1,T}})\Big)\oplus \Big(\mathbb{L}^{\bullet,\text{gr}}_{(\O_{S\times U}\oplus I_{Z_{1,U}})/\O_{S\times U}}\otimes(\O_{S\times T}\oplus I_{Z_{1,T}})\Big),$$ and hence as in the proof of \cite[Thm 12.9]{JS12}, composing $\at_{\O_{S\times T}/\O_{S\times U}}(\phi_T)$ with the projection $$\mathbb{L}^{\bullet,\text{gr}}_{(\O_{S\times T}\oplus I_{Z_{1,T}})/\O_{S\times U}}\to\mathbb{L}^{\bullet}_{\O_{S\times T}/\O_{S\times U}}\cong \pi^*_T\LL_{a},$$ we arrive at the definition of $\at_{\red}(\phi_T)$ in \eqref{atiyah}.  

\end{proof}

Now the idea is that from the deformation/obstruction theory of the universal map $\Phi:\i{n_1}\to \i{n_2}_{\beta}$ reviewed above, we construct  a \emph{relative} perfect obstruction theory $\Fbr \to \LL_{\S{n_1,n_2}_\beta/\S{n_1}}$ (Proposition \ref{rel}). Using the fact that $\S{n_1}$ is nonsingular, we will deduce the \emph{absolute} perfect obstruction theory $\Fb \to \LL_{\S{n_1,n_2}_\beta}$ from $\Fbr$ (Proposition \ref{abs}). This will prove Proposition \ref{abs-r} for $r=2$.

\begin{prop} \label{rel}
The complex $$\Fbr:=\cone(\Xi)^\vee\cong \dR\hom_\pi\left(\coker(\Phi),\i{n_2}_\beta\right)^\vee[-1]$$ defines a relative perfect obstruction theory for the morphism $\p:\S{n_1,n_2}_\beta \to \S{n_1}$. In other words, $\Fbr$ is perfect with amplitude contained in $[-1,0]$, and there exists a morphism in the derived category  $\alpha: \Fbr\to \LL_{\p}$, such that $h^0(\alpha)$ and $h^{-1}(\alpha)$ are respectively isomorphism and epimorphism. The rank of $\Fbr$ is equal to $$\rank\left[\Fbr\right]=n_2-n_1-\frac{\beta \cdot \beta^D}{2}.$$ \end{prop}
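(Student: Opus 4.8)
The plan is to build $\Fbr$ together with its comparison map to $\LL_\p$ directly from the universal morphism $\Phi\colon\i{n_1}\to\i{n_2}_\beta$, running at the universal level the square-zero analysis already carried out above for a single test extension \eqref{sqzero}, and then to extract the rank from Riemann--Roch on the surface fibers of $\pi$. For the stated presentation: since each $\phi_i$, hence $\Phi$, is injective (Definition \ref{defnest}), we have a short exact sequence $0\to\i{n_1}\xrightarrow{\Phi}\i{n_2}_\beta\to\coker(\Phi)\to0$ on $S\times\S{n_1,n_2}_\beta$; applying $\dR\hom_\pi(-,\i{n_2}_\beta)$ and using $\dR\hom_\pi(\i{n_2}_\beta,\i{n_2}_\beta)\cong\dR\hom_\pi(\i{n_2},\i{n_2})$ (cancel $\O(\Z_\beta)$ in source and target), the resulting exact triangle has the form $\dR\hom_\pi(\coker(\Phi),\i{n_2}_\beta)\to\dR\hom_\pi(\i{n_2},\i{n_2})\xrightarrow{\Xi}\dR\hom_\pi(\i{n_1},\i{n_2}_\beta)$ with middle map the $\Xi$ of \eqref{morph1}, whence $\cone(\Xi)\cong\dR\hom_\pi(\coker(\Phi),\i{n_2}_\beta)[1]$ and, dualizing, the claimed description of $\Fbr$.

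For perfectness and amplitude: each $\i{n_i}$ on $S\times\S{n_1,n_2}_\beta$ is the derived pullback of the universal ideal sheaf from the smooth variety $S\times\S{n_i}$, hence perfect, so $\coker(\Phi)$ is perfect; since $\pi$ is smooth and projective of relative dimension $2$, the complex $\dR\hom_\pi(\coker(\Phi),\i{n_2}_\beta)$, and therefore $\Fbr$, is perfect on $\S{n_1,n_2}_\beta$. For the amplitude, flat base change identifies the derived restriction of $\dR\hom_\pi(\coker(\Phi),\i{n_2}_\beta)$ to a closed fiber $S$ with $\dR\Hom_S(\coker(\phi),I_{Z_2}(C))$ (injectivity of $\phi$ shows the fiber of $\coker(\Phi)$ is the sheaf $\coker(\phi)$), and this lies in degrees $[1,2]$ because $\coker(\phi)$ is torsion on the surface $S$ while $I_{Z_2}(C)$ is torsion-free, so $\Hom_S(\coker(\phi),I_{Z_2}(C))=0$, and $\Ext^{\ge3}_S=0$. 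By cohomology and base change $\dR\hom_\pi(\coker(\Phi),\i{n_2}_\beta)$ has amplitude $[1,2]$, so $\Fbr$ has amplitude $[-1,0]$.

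For the comparison map I would take $\alpha\colon\Fbr\to\LL_\p$ to be the image of the relative Atiyah class $\at(\Phi)\in\Ext^1_{S\times\S{n_1,n_2}_\beta}(\coker(\Phi),\pi^*\LL_\p\otimes\i{n_2}_\beta)$ of $\Phi$ over $\S{n_1}$ under the isomorphisms $\Ext^1(\coker(\Phi),\pi^*\LL_\p\otimes\i{n_2}_\beta)\cong\Ext^1_{\S{n_1,n_2}_\beta}\!\big(\dR\hom_\pi(\coker(\Phi),\i{n_2}_\beta)^\vee,\LL_\p\big)=\Hom_{\S{n_1,n_2}_\beta}(\Fbr,\LL_\p)$ supplied by the projection formula and the $\dR\pi_*$--$\pi^!$ adjunction ($\pi^!(-)=\pi^*(-)\otimes\omega_\pi[2]$). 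That $h^0(\alpha)$ is an isomorphism and $h^{-1}(\alpha)$ an epimorphism I would check via the Behrend--Fantechi criterion applied to a square-zero extension $T\hookrightarrow\Tb$ over a $\S{n_1}$-scheme $U$ as in \eqref{sqzero}, with induced $g\colon T\to\S{n_1,n_2}_\beta$: base change and the projection formula give $\Ext^i_T(\dL g^*\Fbr,J)\cong\Ext^{i+1}_{S\times T}\!\big(\coker(\phi_T),\pi_T^*J\otimes I_{Z_{2,T}}(C_T)\big)$, and since $\dL g^*\at(\Phi)=\at(\phi_T)$ of \eqref{atiyah}, the obstruction $\ob=\at(\phi_T)\cup e(\Tb)$ and the $\Ext^1_{S\times T}(\coker(\phi_T),\pi_T^*J\otimes I_{Z_{2,T}}(C_T))$-torsor structure of the set of deformations --- both obtained above following \cite{Ill} and \cite{JS12} --- translate precisely into the obstruction in $\Ext^1_T(\dL g^*\Fbr,J)$ and the deformation torsor in $\Ext^0_T(\dL g^*\Fbr,J)$ required of a relative obstruction theory for $\p$.

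Finally, the defining triangle gives $\rank\Fbr=\rank\dR\hom_\pi(\i{n_1},\i{n_2}_\beta)-\rank\dR\hom_\pi(\i{n_2},\i{n_2})$, and each rank equals the Euler characteristic $\chi_S$ of the corresponding pair of sheaves on a surface fiber; Riemann--Roch with $\ch(I_{Z_i})=(1,0,-n_i)$, $\ch(\O(C))=(1,\beta,\tfrac12\beta^2)$ and $\operatorname{td}(S)=(1,-\tfrac12 K_S,\chi(\O_S))$ yields $\chi_S(I_{Z_2},I_{Z_2})=\chi(\O_S)-2n_2$ and $\chi_S(I_{Z_1},I_{Z_2}(C))=\chi(\O_S)-n_1-n_2-\tfrac12\beta\cdot\beta^D$, whose difference is $n_2-n_1-\tfrac{\beta\cdot\beta^D}{2}$, as claimed. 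I expect the obstruction-theory step to be the main obstacle: keeping the deformation-theoretic bookkeeping of the preceding pages in exact register with the $\Ext$-groups of $\dL g^*\Fbr$ through the base-change and relative-duality manipulations, and checking the compatibility of the universal Atiyah class $\at(\Phi)$ with the pointwise classes $\at(\phi_T)$; by contrast the identification of complexes and the rank count are comparatively routine.
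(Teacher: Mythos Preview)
Your proposal is correct and follows the same route as the paper: build $\alpha$ from the relative Atiyah class $\at(\Phi)$ via Grothendieck--Verdier duality, verify the obstruction-theory axioms through the Behrend--Fantechi criterion applied to the test diagram \eqref{sqzero}, and read off the rank by Riemann--Roch on the surface fibers. Your amplitude check is in fact slightly cleaner than the paper's: working directly with $\dR\Hom_S(\coker(\phi),I_{Z_2}(C))$ and noting $\Hom=0$ (torsion into torsion-free) and $\Ext^{\ge 3}_S=0$ on a surface bypasses the paper's Serre-duality argument for the surjectivity of $\Ext^2_S(I_{Z_2},I_{Z_2})\to\Ext^2_S(I_{Z_1},I_{Z_2}(C))$ used to kill $h^2$ of the cone.
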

\begin{proof}
\textbf{Step 1:} \emph{(perfectness)} We show that the complex $\Fbrv$ is perfect with amplitude $[0,1]$. By basechange and the same argument as in the proof of \cite[Lemma 4.2]{HT10}, it suffices to show that $h^i(\dL t^* \Fbrv)=0$ for $i\neq 0, 1$, where $t:P\hookrightarrow \S{n_1,n_2}_\beta$ is the inclusion of an arbitrary closed point $P=(Z_1,C,Z_2,\phi)\in \S{n_1,n_2}_\beta$. Therefore, by the definition of $\Fbrv$ we get the exact sequence 
$$\dots \to \Ext^{i}_S(I_{Z_1},I_{Z_2} (C)) \to h^i(\dL t^* \Fbrv) \to \Ext^{i+1}_S(I_{Z_2},I_{Z_2})\to \dots .$$
All the $\Ext^i_S$ for $i\neq 0, 1, 2$ vanish, so we deduce easily that $h^i(\dL t^* \Fbrv)=0$ for $i\neq -1, 0, 1,  2$. From the sequence above we see that $$h^{-1}(\dL t^* \Fbrv)=\ker\big(\Hom_S(I_{Z_2},I_{Z_2}) \to \Hom_S(I_{Z_1},I_{Z_2}(C))\big).$$ But by definition this map is induced by applying $\Hom_S(-,I_{Z_2}(C))$ to the map  $\phi:I_{Z_1}\to I_{Z_2}(C)$, so it takes $\id$ to $\phi$, and hence it is injective. Therefore, $h^{-1}(\dL t^* \Fbrv)=0$. 

To prove $h^2(\dL t^* \Fbrv)=0$, we show that the map $$\Ext^{2}_S(I_{Z_2},I_{Z_2})\to \Ext^{2}_S(I_{Z_1},I_{Z_2} (C))$$ in the exact sequence above is surjective, or equivalently by Serre duality, the dual map $$\Hom_S(I_{Z_2},I_{Z_1}\otimes \omega_S(-C))\to \Hom_S(I_{Z_2},I_{Z_2}\otimes \omega_S)$$ is injective.  But this follows after applying the left exact functor $\Hom_S(I_{Z_2},-)$ to the injection $I_{Z_1}\otimes \omega_S(-C)\to I_{Z_2}\otimes \omega_S$ that is induced  by tensoring the map $\phi$ above by $ \omega_S(-C)$. 

\textbf{Step 2}: \emph{(map to the cotangent complex)} We construct a morphism in derived category $\alpha: \Fbr\to \LL_{\p}$. Consider the reduced Atiyah class \eqref{atiyah} in the case $T=\S{n_1,n_2}_\beta$ and $U=\S{n_1}$. It defines an element in 

\begin{align*}
&\Ext^{1}_{S\times \S{\n}_\beta}\big(\coker(\Phi),  \pi^{*}\LL_{\p}\otimes \i{n_2}_\beta\big)\cong\\
&
 \Ext^{1}_{S\times \S{\n}_\beta}\big(\dR \hom\big(\i{n_2}_\beta, \coker(\Phi)\big), \pi^{*}\LL_{\p}\big)\cong \quad(\text{by the definition of $\pi^!$})\\
&
\Ext^{1}_{S\times \S{\n}_\beta}\big(\dR \hom \big(\i{n_2}_\beta, \coker(\Phi)\otimes \omega_{\pi}[2]\big), \pi^{!}\LL_{\p}\big)\cong \quad(\text{by Grothendieck-Verdier duality})\\
&
 \Ext^{1}_{\S{\n}_\beta}\big(\dR \hom_\pi \big(\i{n_2}_\beta , \coker(\Phi)\otimes \omega_{\pi}[2]\big), \LL_{\p}\big)\cong\\
&
\Hom_{\S{\n}_\beta} \big(\dR \hom_\pi \big(\i{n_2}_\beta, \coker(\Phi)\otimes \omega_{\pi}[1]\big),  \LL_{\p}\big).
\end{align*}
So under the identification above, the reduced Atiyah class defines a morphism in derived category
$$\alpha: \dR \hom_\pi \big(\i{n_2}_\beta, \coker(\Phi)\otimes \omega_{\pi}[1]\big)\to  \LL_{\p}.$$ But by Grothendieck-Verdier duality again,
$$\dR \hom_\pi \big(\i{n_2}_\beta, \coker(\Phi)\otimes \omega_{\pi}[1]\big)\cong \dR \hom_\pi \big(\coker(\Phi),\i{n_2}_\beta\big)^\vee[-1] \cong\Fbr,$$ and hence we are done.


\textbf{Step 3:} \emph{(obstruction theory)} We show $h^0(\alpha)$ and $h^{-1}(\alpha)$ are respectively isomorphism and epimorphism. We use the criterion in  \cite[Theorem 4.5]{BF97}. 
Suppose we are in the situation of the diagram \eqref{sqzero}. Define $$f:=(\id,g):S\times T\to S\times \S{n_1,n_2}_\beta.$$ 
Composing $e(\Tb)$ (given in \eqref{cotsqzero}) and the natural morphism of cotangent complexes $\dL g^*\LL_{\p} \to \LL_a$ gives the element $\varpi(g) \in \Ext^1_T(\dL g^*\LL_{\p},J)$ whose image under $\alpha$ is denoted by $$\alpha^*\varpi(g)\in \Ext^{1}_T\left(\dL g^*\Fbr, J\right).$$

For $i=0, 1$, we will use the following identifications:
 \begin{align*}
 \Ext^{i}_T\big(\dL g^*\Fbr, J\big)&\cong \Ext^{i}_T\left(\dL g^*\dR \hom_\pi\big(\i{n_2}_\beta, \coker(\Phi)\otimes \omega_{\pi}[1]\right), J\big)\\
 &
\cong \Ext^{i}_{\S{\n}_\beta}\big( \dR \hom_\pi\big(\i{n_2}_\beta,  \coker(\Phi)\otimes \omega_{\pi}[1]\big), \dR g_*J\big)\\
&
\cong \Ext^{i}_{S\times \S{\n}_\beta}\big(\dR \hom \big(\i{n_2}_\beta, \coker(\Phi)\otimes \omega_{\pi}[1]\big), \pi^!\dR g_*J\big)\\
&
\cong \Ext^{i+1}_{S\times \S{\n}_\beta}\big(\dR \hom \big(\i{n_2}_\beta, \coker(\Phi)\big), \pi^* \dR g_*J\big).
\end{align*}
Here we have used the fact that $\dL g^* \dashv \dR g_*$ i.e. $\dL g^*$ is the left adjoint of $\dR g_*$, and Grothendieck-Verdier duality. Now  using $ \dR f_{*}\pi_T^*=\pi^* \dR g_*$  in the last $\Ext$ above, we get  

\begin{align*}
& \Ext^{i+1}_{S\times \S{\n}_\beta}\big(\dR \hom \big(\i{n_2}_\beta, \coker(\Phi)\big), \dR f_{*}\pi_T^*J\big) \cong \quad \text{(by $\dL f^* \dashv \dR f_*$)}\\
& \Ext^{i+1}_{S\times T}\big(\dL f^* \dR \hom\big(\i{n_2}_\beta, \coker(\Phi)\big), \pi_T^*J\big)\cong \\
& \Ext^{i+1}_{S\times T}\big( \dR\hom\big(\dL f^*\i{n_2}_\beta, \dL f^*\coker(\Phi)\big), \pi_T^*J\big) \cong\\
& \Ext^{i+1}_{S\times T}\big(\dL f^*\coker(\Phi), \pi_T^*J \otimes \dL f^*\i{n_2}_\beta\big)\cong \quad (\text{by flatness of $\coker(\Phi)$ and $\i{n_2}_\beta$})\\
& \Ext^{i+1}_{S\times T}\big( \coker(\phi_T), \pi_T^*J \otimes  I_{Z_{2,T}}(C_T)\big).
 \end{align*}
 Similar to the Step 2 it can be seen that the composition $$\dL g^*\Fbr \xrightarrow{g^*\alpha}\dL g^*\LL_{\p} \to \LL_a$$ arises from $\at_{\red}(\phi_T)$ over $S\times T$ (see \eqref{atiyah}).
Therefore, $$\ob(\phi_T,J)=\at_{\red}(\phi_T) \cup (\pi^*_Te(\Tb)\otimes \id),$$ 
is identified with the element $\alpha^{*}\varpi(g)$ via the identifications above for  $i=1$. By the definition of the obstruction class $\ob(\phi_T,J)$, this means that $\alpha^{*}\varpi(g)$ vanishes if and only if there exists an extension $\overline{g}$ of $g$ corresponding to \eqref{Tbpoint}. Using the identifications above, this time for $i=0$, we can see that if $\alpha^{*}\varpi(g)=0$, then the set of extensions forms a torsor under $\Hom_T(\dL g^*\Fb, J)$. Now by \cite[Theorem 4.5]{BF97}  $\alpha$ is an obstruction theory.

\textbf{Step 4:} \emph{(rank of $\Fbr$)} The claim about the rank follows from
\begin{align*}
\rank \left[\Fbr\right]&=\rank \left[\cone(\Xi)\right] \\&=\rank \left[\dR \hom_\pi\big(\i{n_1}, \i{n_2}_\beta\big)\right]-\rank\left[ \dR\hom_\pi\big(\i{n_2}, \i{n_2}\big )\right] 
\\&=\chi(I_{Z_1},I_{Z_2}(C))- \chi(I_{Z_2},I_{Z_2})\\&= -n_1-n_2+\chi(\O_S(C))+2n_2-\chi(\O_S)
\\&=n_2-n_1-\beta\cdot K_C/2+\beta^2/2,
\end{align*} where $(Z_1,C,Z_2,\phi)$ is a closed point of $\S{n_1,n_2}_\beta$. 

\end{proof}


\begin{prop} \label{abs}
$\S{n_1,n_2}_\beta$ is equipped with the absolute perfect obstruction theory $\Fb \to \LL_{\S{n_1,n_2}_\beta}$. Its virtual tangent bundle is given by 
\begin{align*}\Fbv\cong\cone \bigg( \Big[\dR\hom_\pi \big(\i{n_1},\i{n_1}\big) &  \oplus \dR\hom_\pi\big(\i{n_2},\i{n_2}\big)\Big]_0 \\ &\xrightarrow{[-\Xi' \; \Xi]} \dR \hom_\pi \big(\i{n_1}, \i{n_2}_\beta\big)\bigg).\end{align*}
\end{prop}
\begin{proof}  
Since $\S{n_1}$ is nonsingular, by the standard techniques (see \cite[Section 3.5]{MPT10})
\begin{equation} \label{reltoabs}\Fb:=\cone\left( \Fbr\xrightarrow{\theta}  \p^* \Omega_{\S{n_1}}[1]\right)[-1]\end{equation} gives  a perfect absolute obstruction theory over $\S{n_1,n_2}_\beta$, where $\theta$ is the composition of $\alpha:\Fbr \to \LL_{\p}$ and the Kodaira-Spencer map $c:\LL_{\p}\to \p^*\Omega_{\S{n_1}}[1]$.  We claim that $\theta$ is  given by
\begin{align*} \Fbr\cong &\dR \hom_\pi \big(\i{n_2}_\beta, \coker(\Phi)\otimes \omega_{\pi}\big)[1]\to\dR \hom_\pi \big(\i{n_1}, \coker(\Phi)\otimes \omega_{\pi}\big)[1]\\&\to \dR\hom_{\pi}\big(\i{n_1},\i{n_1}\otimes \omega_\pi\big)_0[2]\cong \p^*\Omega_{\S{n_1}}[1],\end{align*} where the first and second maps are respectively induced by $\Phi:\i{n_1}\to \i{n_2}_\beta$ and the natural map $c':\coker(\Phi)\to \i{n_1}[1]$. 
To see the claim consider the commutative diagram of graded sheaves of algebras on $S\times \S{n_1,n_2}_\beta$ with all unlabelled arrows are the obvious natural maps\footnote{following our convenction we have suppressed the symbols for pullbacks via natural morphisms.}:
\begin{equation}  \label{ddd}
 \xymatrix@=2em{\O_{ S\times \S{n_1}} \ar[r]& \O_{S\times \S{n_1,n_2}_{\beta}}\oplus \i{n_1} \ar[r]^-{(\id,\Phi)}& \O_{S\times \S{n_1,n_2}_\beta} \oplus \i{n_2}_{\beta}\\
 \O_{S } \ar[r] \ar[u]& \O_{S\times \S{n_1,n_2}_\beta}  \ar[r] \ar[u]& \O_{S\times \S{n_1,n_2}_\beta}\oplus \i{n_2}_\beta,\ar@{=}[u]\\
\O_{S } \ar[r] \ar@{=}[u]& \O_{S\times \S{n_1,n_2}_\beta}  \ar[r] \ar@{=}[u]& \O_{S\times \S{n_1,n_2}_\beta}\oplus \i{n_1} \ar[u]_-{(\id,\Phi)}
}\end{equation}
Taking the transitivity triangles (see item 1 in Subsection \ref{notcon}) of the top two rows and applying the same construction leading to the definition of $\at_{\red}(\Phi)$ in the proof of Lemma \ref{atred}, we obtain the commutative diagram of sheaves on $S\times \S{n_1,n_2}_\beta$
$$\xymatrix@C=65pt@R=25pt{
\coker(\Phi)  \ar[r]^-{\at_{\red}(\Phi)} &  \LL_{\p}\otimes \i{n_2}_\beta [1]\\
\i{n_2}_\beta   \ar[u] \ar[r]  & \LL_{\S{n_1,n_2}_\beta}\otimes \i{n_2}_\beta [1] \ar[u]}
$$ 
in which both vertical arrows are the natural maps, the bottom horizontal arrow  is the pullback of the Atiyah class  $$\at(\i{n_2}_\beta) \in \Ext^1(\i{n_2}_\beta, \LL_{\S{n_2}_\beta} \otimes\i{n_2}_\beta)$$ composed with $  \q^* \LL_{\S{n_2}_\beta}\to \LL_{\S{n_1,n_2}_\beta}$ induced by the natural morphism $\q\colon \S{n_1,n_2}_\beta\to \S{n_2}_\beta$. Taking cones over the vertical arrows in turn induces the commutative diagram
\begin{equation}  \label{ddd1}\xymatrix@C=65pt@R=25pt{
 \i{n_1}[1] \ar[r] &  \p^*\Omega_{\S{n_1}}\otimes \i{n_2}_\beta [2]\\
\coker(\Phi)   \ar[u]^{c'} \ar[r]^-{\at_{\red}(\Phi)}  & \LL_{\p}\otimes \i{n_2}_\beta [1]. \ar[u]_-{c\otimes \id}}
\end{equation}  
On the other hand, the transitivity triangles of the bottom two rows of \eqref{ddd} imply that the top row in \eqref{ddd1} is in fact the composition  $$\i{n_1}\xrightarrow{\at(\i{n_1})}\p^*\Omega_{\S{n_1}}\otimes \i{n_1}[1]\xrightarrow{\id \otimes \Phi}\p^*\Omega_{\S{n_1}}\otimes \i{n_2}_\beta[1],$$ where $\at(\i{n_1})$ is the (pullback of the) usual Atiyah class. The claim now follows from this and the construction of the map $\alpha$ using the class $\at_{\red}(\Phi)$ in 
Step 2 of proof of Proposition \ref{rel}. To ease the notation let $$\Ab:=\dR\hom_\pi \big(\i{n_1},\i{n_1}\big), \Bb:=\dR\hom_\pi\big(\i{n_2},\i{n_2}\big), \Cb:=\dR \hom_\pi \big(\i{n_1}, \i{n_2}_\beta\big),$$
and denote by $\Eb$ the right hand side of the expression in the proposition. By Proposition \ref{rel}, $\Fbrv=\cone\left(\Bb\to \Cb\right)$, so by \eqref{tanhilb} and the claim above, \eqref{reltoabs}  can be rewritten as 
\begin{align*}
\Fbv=&\cone \left(\Ab_0 \xrightarrow{\theta^\vee} \cone\left(\Bb\to \Cb\right)\right).
\end{align*}


%
%

Consider the commutative diagram 
$$\xymatrix{
& \dR\pi_* \O  \ar[d]^{[\id\; \id]^t}  \ar@{=}[r] & \dR\pi_* \O \ar[d]^{\id}\\
\Bb   \ar[r] &  \Ab\oplus \Bb \ar[r] & \Ab}.$$
in which the bottom row is the natural exact triangle. Taking the cone of the diagram one gets the exact triangle \begin{align} \label{trfrtr}\Bb \to [\Ab\oplus \Bb]_0\to \Ab_0.\end{align}
Next consider the commutative diagram
$$\xymatrix@=2em{
\Bb  \ar@{=}[d] \ar[r]^-{[\id \; 0]^t} &  \Ab\oplus \Bb \ar[d]^{[-\Xi' \;\Xi]} \ar[r] & \Ab_0\oplus \dR\pi_* \O \ar[d]^{[\theta^\vee \; 0]}\\
\Bb \ar[r]^{\Xi} & \Cb \ar[r] &\cone \left(\Bb\to \Cb\right)  }
$$
 in which both rows are exact triangles, and in the rightmost vertical arrow we use the splitting $\Ab\cong \Ab_0\oplus \dR\pi_* \O$ given by the trace map  (the left square is obviously commutative, and the right square is commutative by the claim we proved above). Now by construction the vertical arrows in the above diagram factor through the exact triangle \eqref{trfrtr}, and hence we arrive at the following commutative diagram in which all the rows and columns are exact triangle.

$$\xymatrix@=1em{
\Bb  \ar@{=}[d] \ar[r] &  [\Ab\oplus \Bb]_0 \ar[d]^{[-\Xi' \;\Xi]} \ar[r] & \Ab_0 \ar[d]^{\theta^\vee}\\
\Bb \ar[r]^{\Xi} & \Cb \ar[d] \ar[r] &\cone \left(\Bb\to \Cb\right) \ar[d]  \\
& \Eb \ar[r]  & \Fbv.}
$$
In fact the columns and the top and middle rows are exact triangles with commutative top squares, and the bottom row is induced from the rest of the diagram by taking the cone. Therefore, the bottom row must also be an exact triangle which means that $\Eb\cong \Fbv$ as desired.
\end{proof}
This finishes the proof of Proposition \ref{abs-r} in the case $r=2$.
Propositions \ref{rel} and \ref{abs} imply 
\begin{cor} \label{virclass}
The perfect obstruction theory of Proposition \ref{abs} defines a virtual fundamental class on $\S{n_1,n_2}_\beta$ denoted by $$[\S{n_1,n_2}_\beta]^{\vir}\in A_d(\S{n_1,n_2}_\beta), \quad  \quad d=n_1+n_2-\frac{\beta \cdot \beta^D}{2}.$$

\end{cor}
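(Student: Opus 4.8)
The plan is to obtain Corollary~\ref{virclass} as a formal consequence of Propositions~\ref{rel} and~\ref{abs} together with the Behrend--Fantechi construction of virtual classes, so that the only substantive thing left to do is a rank count. First I would record that Proposition~\ref{abs} supplies, through \eqref{reltoabs}, a morphism $\Fb\to\LL_{\S{n_1,n_2}_\beta}$ from a complex of perfect amplitude contained in $[-1,0]$ to the (truncated) cotangent complex, satisfying the conditions that make $\Fb$ a perfect obstruction theory. Since $\S{n_1,n_2}_\beta$ is projective over $\C$, such a $\Fb$ admits a global resolution by a two-term complex of locally free sheaves, so \cite[\S 5]{BF97} applies verbatim and produces $[\S{n_1,n_2}_\beta]^{\vir}\in A_{\rank\Fb}(\S{n_1,n_2}_\beta)$. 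The corollary is thereby reduced to the single equality $\rank\Fb=n_1+n_2-\tfrac{\beta\cdot\beta^D}{2}$.

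For the rank I would use the relative-to-absolute triangle \eqref{reltoabs}: since $\Fb\cong\cone(\Fbr\to\p^*\Omega_{\S{n_1}}[1])[-1]$, additivity of the Euler characteristic along the exact triangle $\Fb\to\Fbr\to\p^*\Omega_{\S{n_1}}[1]\to\Fb[1]$ gives $\rank\Fb=\rank\Fbr+\dim\S{n_1}$, and Step~4 of Proposition~\ref{rel} together with $\dim\S{n_1}=2n_1$ yields $\rank\Fb=(n_2-n_1-\tfrac{\beta\cdot\beta^D}{2})+2n_1=n_1+n_2-\tfrac{\beta\cdot\beta^D}{2}$, exactly as claimed. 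As an independent check I would recompute the rank from the presentation of $\Fbv$ in Proposition~\ref{abs}: evaluated at a closed point $(Z_1,C,Z_2,\phi)$ it equals $\chi(I_{Z_1},I_{Z_2}(C))-\chi(I_{Z_1},I_{Z_1})-\chi(I_{Z_2},I_{Z_2})+\chi(\O_S)$, and this collapses to the same value using the exact sequences $0\to I_{Z_i}\to\O_S\to\O_{Z_i}\to 0$, the vanishing $\chi(\O_{Z_i},\O_{Z_j})=0$ for zero-dimensional subschemes of a surface, the identities $\chi(\O_{Z_i},\cL)=\chi(\cL,\O_{Z_i})=n_i$ for any line bundle $\cL$, and Riemann--Roch in the form $\chi(\O_S(C))-\chi(\O_S)=\tfrac12(\beta^2-\beta\cdot K_S)=-\tfrac12\,\beta\cdot\beta^D$.

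I do not expect a real obstacle here, since all of the deformation-theoretic content is already carried by Propositions~\ref{rel} and~\ref{abs} and the corollary is essentially a repackaging. The one point that needs a little attention is the bookkeeping of the trace-free truncation $[-]_0$, which shifts ranks by $\chi(\O_S)$: in the cone defining $\Fbv$ it is applied only to $\dR\hom_\pi(\i{n_1},\i{n_1})\oplus\dR\hom_\pi(\i{n_2},\i{n_2})$, so its contribution cancels against the $\chi(\O_S)$ appearing in the very definition of $[-]_0$, and the computation routed through \eqref{reltoabs} sidesteps the issue entirely — which is why I would present that one as the main argument.
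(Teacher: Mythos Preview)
Your proposal is correct and follows exactly the approach implicit in the paper: the corollary is stated immediately after ``Propositions \ref{rel} and \ref{abs} imply'' and is given no proof beyond a \qed, so all the content lies in those propositions. Your detailed unpacking---invoking \cite{BF97} for the virtual class and computing the rank via the triangle \eqref{reltoabs} as $\rank\Fbr+2n_1$ using Step~4 of Proposition~\ref{rel}---is precisely what the paper leaves to the reader.
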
 \qed

\subsection{Gillam's construction} \label{gillam} The relative obstruction theory $\Fbr$ is obtained from the deformation/obstruction theory of the universal map $\Phi:\i{n_1}\to \i{n_2}_{\beta}$. 
As mentioned in the introduction, following \cite{G11}, one can can instead use deformation/obstruction theory of the quotient $\coker(\Phi)$  to construct a relative perfect obstruction theory  $$F'^\bullet_{\rel} \to \LL_{\S{n_1,n_2}_\beta/\S{n_2}}.$$  This amounts to identifying $\S{n_1,n_2}_\beta$ with a component of the relative quot scheme $$\operatorname{Quot}_{S\times \S{n_2}/\S{n_2}}(\i{n_2})$$ of quotients of $\i{n_2}$ and applying \cite[Theorem 4.6]{G11}. By a similar argument as Proposition \ref{abs} (using the smoothness of $\S{n_2}$ this time), one can then deduce an absolute perfect obstruction theory $F'^\bullet\to \LL_ {\S{n_1,n_2}_\beta}$. By comparing the K-theory classes of $F^\bullet$ and $F'^\bullet$ the reader can verify that the resulting virtual class from this approach is the same as that of Corollary \ref{virclass} (see \cite{S04}).

\subsection{Reduced perfect obstruction theory} \label{sec:reduced}
In this section we assume that for any effective line bundle $L$ on $S$ with $c_1(L)=\beta$, we have \begin{equation}\label{condition} |L^D|=\emptyset \quad \text{or equivalently} \quad H^2(L)=0. \end{equation}
Recall that the relative virtual tangent bundle of Proposition \ref{rel} is given by 
$$\Fbrv=\cone\bigg(\dR\hom_\pi\big(\i{n_2}_\beta, \i{n_2}_\beta\big ) \to \dR \hom_\pi\big(\i{n_1}, \i{n_2}_\beta\big)\bigg).$$  We get a natural map $$\mu: \Fbrv\to \dR\hom_\pi\big(\i{n_2}_\beta, \i{n_2}_\beta\big )[1],$$ that induces \begin{align*}h^1(\mu): h^1(\Fbrv)\cong & \;\ext^2_{\pi}\big(\i{n_2}_\beta/\i{n_1},\i{n_2}_\beta\big)\to \\&\ext^2_\pi\big (\i{n_2}_\beta,\i{n_2}_\beta\big) \cong \dR^2\pi_*\O_{S\times \S{n_1,n_2}_\beta}\cong \O_{\S{n_1,n_2}_\beta}^{ p_g}.\end{align*} We claim that $h^1(\mu)$ is surjective. To see this, by basechange, it suffices to prove that $h^1(\mu)$ is fiberwise surjective. Let  $t:P\hookrightarrow \S{n_1,n_2}_\beta$ be the inclusion of an arbitrary closed point $P=(Z_1,C,Z_2,\phi)\in \S{n_1,n_2}_\beta$. Then, by basechange we have the natural exact sequence\footnote{Note that $\Ext^3_S(\coker (\phi),I_{Z_2}(C))=\Ext^3_S(I_{Z_2}(C),I_{Z_2}(C))=0$.}
$$\dots \to h^1(\dL t^*\Fbrv)\xrightarrow{h^1(\mu)_P} \Ext^2_S(I_{Z_2}(C),I_{Z_2}(C))\xrightarrow{u} \Ext^2_S(I_{Z_1},I_{Z_2}(C))\to 0.$$
The surjectivity of the map $u$ was established in Step 1 of the proof of Proposition \ref{rel}. We have 
 $$\Ext^2_S(I_{Z_2}(C),I_{Z_2}(C))\cong \Ext^2_S(I_{Z_2},I_{Z_2})\cong H^2(\O_S),$$ $$\Ext^2_S(I_{Z_1},I_{Z_2}(C))^*\cong\Hom_S(I_{Z_2},I_{Z_1}(C)^D) \subseteq \Hom_S(I_{Z_2},\O_S(C)^D)\cong H^0(\O_S(C)^D).$$ By assumption \eqref{condition}, $H^0(\O_S(C)^D)=0$, and hence $h^1(\mu)_P$ is surjective and the claim follows. 
We now have the diagram
 $$ \xymatrix{\Ext^1_S(I_{Z_1},I_{Z_1})_0 \ar[r]^-{h^1(\theta^\vee)_P}& \Ext^2_S(I_{Z_2}(C)/I_{Z_1},I_{Z_2}(C)) \ar[r] \ar[d]^{h^1(\mu)_P} & h^1(\dL t^*\Fbv) \ar[r] &0 \\
& \Ext^2_S(I_{Z_2}(C),I_{Z_2}(C)) & &} $$ 
where the first row is exact by the proof Proposition \ref{abs}. But since $$h^1(\mu)_P \circ h^1(\theta^\vee)_P=0,$$ the surjection $h^1(\mu)_P$ factors through $h^1(\dL t^*\Fbv)$. Therefore, by basechange again there exists a surjection $h^1(\Fbv) \to  \O_{\S{n_1,n_2}_\beta}^{ p_g}.$
 
\begin{prop} \label{vanish}
If the condition \eqref{condition} is satisfied and $p_g(S)>0$, then, $$[\S{n_1,n_2}_\beta]^{\vir}=0.$$
\end{prop}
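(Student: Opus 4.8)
The plan is to show that the obstruction theory $\Fbv$ admits a trivial quotient bundle of rank $p_g$, which forces the virtual class to vanish. Concretely, once we produce a surjection of sheaves $h^1(\Fbv)\to \O_{\S{n_1,n_2}_\beta}^{\oplus p_g}$, the cosection-type argument (or simply the fact that the virtual normal cone sits inside the kernel of the dual cosection) shows that $[\S{n_1,n_2}_\beta]^{\vir}$ is killed. Actually the cleanest phrasing: a perfect obstruction theory whose $h^1$ surjects onto a trivial bundle of positive rank yields an everywhere-nonvanishing section of the obstruction sheaf's dual (i.e. a cosection with empty degeneracy locus), and by the Kiem--Li cosection localization principle the virtual class localizes to the empty locus, hence is zero. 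So the whole game is the construction of that surjection.

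First I would recall the natural map $\mu:\Fbrv\to \dR\hom_\pi(\i{n_2}_\beta,\i{n_2}_\beta)[1]$ coming from the definition $\Fbrv=\cone(\dR\hom_\pi(\i{n_2}_\beta,\i{n_2}_\beta)\to\dR\hom_\pi(\i{n_1},\i{n_2}_\beta))$, and take its $h^1$. The target $\ext^2_\pi(\i{n_2}_\beta,\i{n_2}_\beta)$ is, by relative Serre duality on the smooth fibration $\pi$ together with the fact that $\ext^0_\pi(\i{n_2}_\beta,\i{n_2}_\beta\otimes\omega_\pi)\cong\O$, isomorphic to $\dR^2\pi_*\O_{S\times\S{n_1,n_2}_\beta}\cong\O^{\oplus p_g}$ (using $p_g(S)=h^2(\O_S)$ and cohomology-and-base-change, since $h^2(\O_S)$ is constant). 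Then I would argue fiberwise: at a closed point $P=(Z_1,C,Z_2,\phi)$ the base-change exact sequence reads
$$\cdots\to h^1(\dL t^*\Fbrv)\xrightarrow{h^1(\mu)_P}\Ext^2_S(I_{Z_2}(C),I_{Z_2}(C))\xrightarrow{u}\Ext^2_S(I_{Z_1},I_{Z_2}(C))\to 0,$$
the surjectivity of $u$ being exactly what was proved in Step 1 of Proposition~\ref{rel} (it is Serre-dual to the injectivity of $\Hom_S(I_{Z_2},I_{Z_1}\otimes\omega_S(-C))\to\Hom_S(I_{Z_2},I_{Z_2}\otimes\omega_S)$, which holds because $\phi$ is injective). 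The point where condition \eqref{condition} enters: $\Ext^2_S(I_{Z_1},I_{Z_2}(C))^*\cong\Hom_S(I_{Z_2},I_{Z_1}(C)^D)\hookrightarrow\Hom_S(I_{Z_2},\O_S(C)^D)\cong H^0(\O_S(C)^D)=H^0(L^D)$, which vanishes by hypothesis; hence $\Ext^2_S(I_{Z_1},I_{Z_2}(C))=0$, so $h^1(\mu)_P$ is surjective onto $\Ext^2_S(I_{Z_2}(C),I_{Z_2}(C))\cong H^2(\O_S)$. Wait --- I should double-check the hypothesis form: in the reduced setting one works with the surjectivity of $H^1(T_S)\xrightarrow{\cup\beta}H^2(\O_S)$ as in Theorem~\ref{thm1.5}, but for Proposition~\ref{vanish} it is the cleaner condition \eqref{condition} that is assumed, and that is all I need here.

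Next I would promote this pointwise surjectivity to the absolute obstruction theory. Using \eqref{reltoabs}, $\Fbv=\cone(\Ab_0\xrightarrow{\theta^\vee}\Fbrv)$, so on $h^1$ we get the exact row $\Ext^1_S(I_{Z_1},I_{Z_1})_0\xrightarrow{h^1(\theta^\vee)_P}h^1(\dL t^*\Fbrv)\to h^1(\dL t^*\Fbv)\to 0$. The key compatibility is that $h^1(\mu)_P\circ h^1(\theta^\vee)_P=0$: the composite $\Ab_0\to\Fbrv\xrightarrow{\mu}\dR\hom_\pi(\i{n_2}_\beta,\i{n_2}_\beta)[1]$ is the trace-type/functoriality map induced by $\Phi$ followed by the canonical map, and it vanishes because deforming $\i{n_1}$ alone (which is what $\Ab_0$ records, cf. Remark~\ref{def-theory}) induces no deformation of $\i{n_2}_\beta$ as an abstract sheaf --- more precisely $\theta^\vee$ lands in the "$\coker(\Phi)$" part and $c':\coker(\Phi)\to\i{n_1}[1]$ composed back with $\Phi$ is zero, so the composite through $\i{n_2}_\beta$ dies. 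Granting this, $h^1(\mu)_P$ factors through the quotient $h^1(\dL t^*\Fbv)$, giving a fiberwise surjection $h^1(\dL t^*\Fbv)\twoheadrightarrow H^2(\O_S)$; by cohomology-and-base-change this globalizes to a surjection of coherent sheaves $h^1(\Fbv)\twoheadrightarrow\O_{\S{n_1,n_2}_\beta}^{\oplus p_g}$. Since $p_g(S)>0$ this is a nonzero trivial quotient of the obstruction sheaf with empty degeneracy locus; applying Kiem--Li cosection localization to (the dual of) this quotient yields $[\S{n_1,n_2}_\beta]^{\vir}=0$. The main obstacle, and the only genuinely non-formal point, is verifying the vanishing $h^1(\mu)_P\circ h^1(\theta^\vee)_P=0$ at the level of the obstruction-theory maps --- i.e. tracking through the Atiyah-class construction of $\theta$ from Step~2 of Proposition~\ref{rel} and the commutative square relating $\at(\Phi)$, $c'$, and $\at(\i{n_1})$ in the proof of Proposition~\ref{abs} --- everything else is base change, Serre duality, and bookkeeping.
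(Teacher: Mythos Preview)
Your proposal is correct and follows essentially the same route as the paper: construct the surjection $h^1(\Fbv)\twoheadrightarrow \O^{\oplus p_g}$ via the map $\mu$, verify fiberwise surjectivity using condition~\eqref{condition} and the Step~1 argument of Proposition~\ref{rel}, factor through the absolute obstruction sheaf using $h^1(\mu)_P\circ h^1(\theta^\vee)_P=0$, and then invoke Kiem--Li. The paper's actual proof of the proposition is a two-line reference to the preceding discussion (which is exactly your construction) plus the composition $\O^{\oplus p_g}\xrightarrow{[1\;\cdots\;1]}\O$ before applying \cite[Theorem~1.1]{KL13}; you are also right to flag the vanishing $h^1(\mu)_P\circ h^1(\theta^\vee)_P=0$ as the one step requiring care --- the paper asserts it without elaboration.
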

\begin{proof}
Under the assumptions of the proposition, we showed above that the obstruction sheaf admits a surjection $$h^1(\Fbv) \to  \O_{\S{n_1,n_2}_\beta}^{ p_g}\xrightarrow{[1\;\dots \;1]}  \O_{\S{n_1,n_2}_\beta},$$ and hence the associated virtual class vanishes by \cite[Theorem 1.1]{KL13}.
\end{proof}

\begin{defi} \label{defired} The map $h^1(\mu)$ induces the morphism in derived category $$\Fbv\to h^1(\Fbv)[-1]\to h^1(\Fbrv)[-1]\xrightarrow{h^1(\mu)} \O^{p_g}_{\S{\n}_\beta}[-1].$$ Dualizing gives a map $ \O^{p_g}_{\S{\n}_\beta}[1]\to \Fb$. Define $\Fb_{\red}$ to be its cone.
\end{defi}

We will show that under a slightly stronger condition than \eqref{condition}, $\Fb_{\red}$ gives rise to a perfect obstruction theory over $\S{n_1,n_2}_\beta$. First note that the curve class $\beta \in H^{1,1}(S)\cap H^2(S,\ZZ)$ defines an element of $H^1(\Omega_S)$ and consider the natural pairing $T_S\otimes \Omega_S\to \O_S$. This condition is\footnote{This is condition (3) in \cite{KT14}.} 
\begin{equation} \label{scond} H^1(T_S)\xrightarrow{*\cup \beta} H^2(T_S\otimes \Omega_S)\to  H^2(\O_S)\quad \text{is surjective.}\end{equation}

To show $\Fb_{\red}$ gives rise to a perfect obstruction theory, we use the beautiful idea of \cite{KT14}. We sketch their method here and make some necessary changes; the reader can find the missing detail in \cite{KT14}. $S$ is embedded as the central fiber of an algebraic twistor family  $\cS\to B$, where $B$ is a first order Artinian neighborhood of the origin in a certain $p_g$-dimensional family of the first order deformations of $S$ \footnote{To simplify notation, we have used $\cS$ instead of $\cS_B$ that was used in \cite{KT14}.}. Explicitly, let  $$V\subset H^1(T_S)$$ be  a subspace over which $*\cup \beta$ in \eqref{scond} restricts to an isomorphism, and let $\mathfrak m$ denote the maximal ideal at the origin $0 \in H^1(T_S)$. Then, $$B:=\text{Spec} (\O_V/\mathfrak m^2),$$ and $\cS$ is the restriction of a tautological flat family of surfaces with Kodaira-Spencer class the identity in $$H^1(T_S)^*\otimes
H^1(T_S)\cong \Ext^1(\Omega_S,\O_S\otimes H^1(T_S)).$$  The Zarisiki tangent space $T_B$ is naturally identified with $V$.  By \cite[Lemma 2.1]{KT14}, $\cS$ is transversal to the Noether-Lefschetz locus of the $(1,1)$-class $\beta$, and as a result, $\beta$ does not deform outside of the central fiber of the family. Using this fact, as in \cite[Proposition 2.3]{KT14}, one can show that \begin{equation}\label{isomnest}\S{n_1,n_2}_\beta \cong (\cS/B)^{[n_1,n_2]}_\beta,\end{equation} where the right hand side is the relative nested Hilbert scheme of the family $\cS\to B$. We use the same symbols $$\Phi: \i{n_1}\to \i{n_2}_\beta$$ as before to denote the universal objects over $\cS\times_B  (\cS/B)^{[n_1,n_2]}_\beta$, and we let $\pi$ be the projection to the second factor of $\cS\times_B  (\cS/B)^{[n_1,n_2]}_\beta$. The arguments of Section \ref{2-step} can be adapted with no changes to prove that $$\cone \bigg( \left[\dR\hom_\pi \big(\i{n_1},\i{n_1}\big)\oplus \dR\hom_\pi\big(\i{n_2},\i{n_2}\big)\right]_0 \to \dR \hom_\pi \big(\i{n_1}, \i{n_2}_\beta\big)\bigg)$$ is the virtual tangent bundle of a perfect $B$-relative obstruction theory $ \Gb_{\rel}\to \LL_{(\cS/B)^{[n_1,n_2]}_\beta/B}$, and $$\Gb:=\cone\left( \Gb_{\rel}\to   \Omega_{B}[1]\right)[-1]\to \LL_{(\cS/B)^{[n_1,n_2]}_\beta}$$ is the associated absolute perfect obstruction theory. By the definitions of $\Fb$ and $\Gb_{\rel}$, and the isomorphism \eqref{isomnest}, we see that $\Fb\cong \Gb_{\rel}$. Now we claim that the composition $$\Gb\to \Gb_{\rel}\cong \Fb \to \Fb_{\red}$$ is an isomorphism. By the definitions of $\Gb_{\rel}$ and $\Fb_{\red}$, to prove the claim, it suffices to show that \begin{equation}\label{chain} \O^{p_g}_{\S{\n}_\beta} \to  \Fb[-1]\cong\Gb_{\rel}[-1]\to \Omega_B\end{equation} is an isomorphism. By the Nakayama lemma we may check this at a closed point $P=(Z_1,C,Z_2,\phi)\in \S{n_1,n_2}_\beta$. Define the reduced Atiyah class corresponding to $P$ as follows. Consider the natural homomorphisms of sheaf of graded algebras on $S$ $$\O_{\spec \C}\to \O_{S}\oplus I_{Z_1}\xrightarrow{} \O_{S}\oplus I_{Z_2}(C).$$ The degree 1 part of the transitivity triangle (see item 1 in Subsection \ref{notcon}) associated to the graded cotangent complexes gives the first arrow in $$\coker(\phi)\to k^1\big(\mathbb{L}^{\bullet,\text{gr}}_{\O_S\oplus I_{Z_1}}\otimes (\O_S\oplus I_{Z_2(C)})\big)[1] \xrightarrow{\text{project}} \Omega_S\otimes I_2(C)[1].$$  Define $\at_{\red}(\phi)$ to be the composition of these two arrows.

  After dualizing and using the identifications above the pullback of \eqref{chain} to $P$ becomes 
\begin{align*}T_B=V\subset H^1(T_S)\xrightarrow{ \at_{\red}(\phi)}&\Ext^2(\coker (\phi),I_{Z_2}(C))\\ \xrightarrow{h^1(\mu)_P}&\Ext^2(I_{Z_2}(C),I_{Z_2}(C))\xrightarrow{\tr}H^2(\O_S).\end{align*}
Here as in \cite{KT14}, one needs to use a similar argument as \cite[Proposition 13]{MPT10} to deduce that the composition of $\Gb_{\rel}\to \LL_{(\cS/B)^{[n_1,n_2]}_\beta/B}$ and the Kodaira-Spencer map $\LL_{(\cS/B)^{[n_1,n_2]}_\beta/B}\to \Omega_B[1]$ for $(\cS/B)^{[n_1,n_2]}_\beta$ coincides with the cup product of $\at_{\red}(\phi)$ and the Kodaira-Spencer class for $S$. Similar to [ibid], this is achieved by relating the reduced Atiyah class of $\cS\times_B  (\cS/B)^{[n_1,n_2]}_\beta$ arising (as in Lemma \ref{atred}) from the transitivity triangle  associated to the natural maps of sheaves of graded algebras $$\O_B\to \O_{\cS\times_B  (\cS/B)^{[n_1,n_2]}_\beta}\oplus \i{n_1}\to \O_{\cS\times_B  (\cS/B)^{[n_1,n_2]}_\beta}\oplus \i{n_2}_\beta$$ to the reduced Atiyah classes of each factors.
\begin{lem} $h^1(\mu)_P\circ\at_{\red}(\phi)=\at(I_{Z_2}(C)),$ where $$\at(I_{Z_2}(C))\in \Ext^1(I_{Z_2}(C), \Omega_S\otimes I_{Z_2}(C))$$ is the usual Atiyah class.  \end{lem} 
\begin{proof}
Consider the commutative diagram of sheaves of graded algebras with all unlabelled arrows are the obvious natural maps:
$$
 \xymatrix@=2em{\C \ar[r] & \O_S\oplus I_1 \ar[r]^-{(\id, \phi)} & \O_{S} \oplus I_2(C)\\
 \C \ar[r] \ar@{=}[u] & \O_{S}  \ar[r] \ar[u]& \O_{S}\oplus I_2(C).\ar@{=}[u]
}$$
Taking the degree 1 part of the the transitivity triangles of the rows followed by a projection as  in the definition of $\at_{\red}(\phi)$ above we get the commutative diagram
$$
 \xymatrix@=3em{\coker(\phi) \ar[r]^-{\at_{\red}(\phi)} & \Omega_S \otimes I_2(C)[1]\\
 I_2(C) \ar[r]^-{\at(I_2(C))} \ar[u]^-{h^1(\mu)_P} &\Omega_S \otimes I_2(C)[1] \ar@{=}[u]
}$$ proving the lemma.
\end{proof}

But by \cite[Prop 4.2]{BFl03}, $$\tr \circ \at(I_{Z_2}(C))=- *\cup \beta,$$ which by condition \eqref{scond} is an isomorphism when restricted to $V\subset H^1(T_S)$, and hence the claim is proven. We have shown

\begin{prop} \label{reduced}
If the condition \eqref{scond} is satisfied, then, $\Fb_{\red}$ is a perfect obstruction theory on $\S{n}_\beta$, and hence defines a reduced virtual fundamental class
$$[\S{n_1,n_2}_\beta]^{\vir}_{\red}\in A_{d'}(\S{n_1,n_2}_\beta), \quad d'=n_1+n_2-\frac{\beta\cdot \beta^D}{2}+p_g(S).$$
\end{prop}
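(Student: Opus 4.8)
The plan is to identify $\Fb_{\red}$ with the \emph{absolute} perfect obstruction theory of a relative moduli problem, following the strategy of Kool-Thomas \cite{KT14}. Since, by Definition \ref{defired}, $\Fb_{\red}$ is the cone of a morphism $\O^{p_g}_{\S{\n}_\beta}[1]\to\Fb$, and since the absolute obstruction theory attached to a relative moduli space over a base $B$ is built as $\cone(\Gb_{\rel}\to\Omega_B[1])[-1]$, it is enough to produce a base $B$ with $T_B$ of dimension $p_g$, together with a relative moduli space over $B$ whose $B$-relative obstruction theory is $\Fb$ and for which the two reductions agree map by map.

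First I would recall the algebraic twistor family of \cite{KT14}. Condition \eqref{scond} lets us pick a subspace $V\subseteq H^1(T_S)$, necessarily of dimension $p_g$, on which $*\cup\beta$ restricts to an isomorphism onto $H^2(\O_S)$; let $B$ be the first order Artinian neighbourhood of the origin with $T_B\cong V$, and let $\cS\to B$ be the corresponding family of first order deformations of $S$. The key geometric input is that $\cS$ is constructed transversal to the Noether-Lefschetz locus of the $(1,1)$-class $\beta$, so $\beta$ does not deform off the central fibre; arguing as in \cite[Proposition 2.3]{KT14} this gives the isomorphism $\S{n_1,n_2}_\beta\cong(\cS/B)^{[n_1,n_2]}_\beta$ of \eqref{isomnest}. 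Running the constructions of Section \ref{2-step} verbatim, but relatively over $B$ and for the universal map $\Phi:\i{n_1}\to\i{n_2}_\beta$ on $\cS\times_B(\cS/B)^{[n_1,n_2]}_\beta$, produces a perfect $B$-relative obstruction theory $\Gb_{\rel}$ whose dual is $\cone\big([\dR\hom_\pi(\i{n_1},\i{n_1})\oplus\dR\hom_\pi(\i{n_2},\i{n_2})]_0\to\dR\hom_\pi(\i{n_1},\i{n_2}_\beta)\big)$, and hence an absolute perfect obstruction theory $\Gb=\cone(\Gb_{\rel}\to\Omega_B[1])[-1]$ (see \cite{KT14}). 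Comparing with Propositions \ref{rel} and \ref{abs} under \eqref{isomnest} we get $\Fb\cong\Gb_{\rel}$.

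It then remains to show that the composite $\Gb\to\Gb_{\rel}\cong\Fb\to\Fb_{\red}$ is an isomorphism; by the definitions of $\Gb_{\rel}$ and $\Fb_{\red}$ this is equivalent to the chain $\O^{p_g}_{\S{\n}_\beta}\to\Fb[-1]\cong\Gb_{\rel}[-1]\to\Omega_B$ of \eqref{chain} being an isomorphism, which by Nakayama may be checked at a single closed point $P=(Z_1,C,Z_2,\phi)$. Dualizing and using the compatibility between the Atiyah class defining $\alpha$ and the Kodaira-Spencer map (the same diagram-chase with exact triangles of transitivity as in the proof of Proposition \ref{abs}), the derived fibre of the chain becomes
$$V\subseteq H^1(T_S)\xrightarrow{\at(\phi)}\Ext^1(\coker(\phi),I_{Z_2}(C))\xrightarrow{h^1(\mu)_P}\Ext^2(I_{Z_2}(C),I_{Z_2}(C))\xrightarrow{\tr}H^2(\O_S).$$
By naturality of the Atiyah classes, $h^1(\mu)_P\circ\at(\phi)=\at(I_{Z_2}(C))$, and \cite[Prop 4.2]{BFl03} identifies $\tr\circ\at(I_{Z_2}(C))=-{*}\cup\beta$, which is an isomorphism on $V$ by \eqref{scond}. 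Hence the chain is an isomorphism, so $\Fb_{\red}\cong\Gb$ is a perfect obstruction theory; it has rank $p_g(S)$ larger than $\Fb$, which gives the stated virtual dimension $d'=n_1+n_2-\frac{\beta\cdot\beta^D}{2}+p_g(S)$.

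The step I expect to be the main obstacle is the identification $\Fb\cong\Gb_{\rel}$ together with the compatibility of all the Atiyah-class and Kodaira-Spencer maps across the absolute, $B$-relative, and $\S{n_1}$-relative settings — the content really hidden in \eqref{chain}: $\Fb_{\red}$ is defined intrinsically through the obstruction map $h^1(\mu)$, and one must match it, map for map, with the $\Omega_B[1]$-term coming from the twistor family. The twistor construction, the Noether-Lefschetz transversality, and the isomorphism \eqref{isomnest} are imported essentially unchanged from \cite{KT14}; the only genuine adaptation is checking that the deformation theory of the nested pair $\Phi:\i{n_1}\to\i{n_2}_\beta$ behaves, relatively over $B$, exactly as in Section \ref{2-step}, which is routine given Propositions \ref{rel} and \ref{abs}.
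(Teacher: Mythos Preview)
Your proposal is correct and follows essentially the same approach as the paper: both use the algebraic twistor family $\cS\to B$ of \cite{KT14} with $T_B\cong V$, establish the isomorphism \eqref{isomnest}, identify $\Fb$ with the $B$-relative obstruction theory $\Gb_{\rel}$, and then verify that the chain \eqref{chain} is an isomorphism fibrewise via the Atiyah class computation and \cite[Prop 4.2]{BFl03}. Your identification of the main technical point---matching the intrinsic map $h^1(\mu)$ defining $\Fb_{\red}$ with the Kodaira-Spencer map to $\Omega_B[1]$---is exactly the content the paper flags in the footnote attached to \eqref{chain}.
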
\qed

\subsection{Invariants} \label{invs}

Let $\cP_1,\dots, \cP_s$ be polynomials in the Chern classes of $\i{n_1}, \i{n_2}$, $\I_{-\beta}$, $\pi^* T_{\S{n_1}}$, $\pi^* T_{\S{n_2}}$, etc. cupped with the pullback of a cohomology classes from $S$ then, we can define the invariant
$$\sN_S(n_1,n_2,\beta; \cP_1,\dots, \cP_s):=\int_{[\S{n_1,n_2}_\beta]^{\vir}}\prod_{i=1}^s \pi_* \cP_i.$$
If the condition \eqref{scond} is satisfied, we can define the reduced invariants $$\sN^{\red}_S(n_1,n_2,\beta; \cP_1,\dots, \cP_s):=\int_{[\S{n_1,n_2}_\beta]^{\vir}_{\red}}\prod_{i=1}^s \pi_* \cP_i.$$

Let $u:=c_1(\O(\Z_\beta))|_{x\times \S{n_1,n_2}_\beta}$, where $x\in S$ is an arbitrary closed point.
Define $$P_S(n_1,n_2,\beta):=\det_*\left(\sum_{i\ge 0} [\S{n_1,n_2}_\beta]^{\vir} \cap u^i\right) \in H^*(\pic(S)).$$

In Section \ref{special} we will see that virtual classes $[S_\beta]^{\vir}$, $[\S{0,n_2}_\beta]^{\vir}_{\red}$ coincide respectively with the virtual classes constructed in \cite{DKO07} and \cite{KT14}. Therefore, by suitable choices of the integrands $\pi_*\cP_i$, the invariants $\sN^{\red}_S(0,n_2,\beta;  \cP_1,\dots, \cP_s)$ recover the reduced stable pair invariants of \cite{KT14}. Similarly, the invariants $P_S(0,0,\beta)$ recover Poincar\'e invariants of \cite{DKO07}. In \cite{GSY17b}, we express certain contributions to the reduced localized DT invariants of $S$ in terms of the invariants $\sN_S(n_1,n_2,\beta;\cP)$ by making suitable choices of the integrand $\cP$.\footnote{To clarify the potential confusion for the reader, we emphasize that the reduced localized DT invariants of \cite{GSY17b} uses the non-reduced virtual class $[\S{n_1,n_2}_\beta]^{\vir}$.} We will study some of the invariants $\sN_S(n_1,n_2,0;\cP)$ in Sections \ref{sec:nestpoints} and \ref{sec:co}. 

\subsection{Proof of Theorem \ref{thm1} (Proposition \ref{abs-r})} \label{r-step} In Section \ref{2-step} we proved Proposition \ref{abs-r} in the case $r=2$. We now use induction on $r$ to prove the theorem in general. 
For the simplicity of the notation, we show in detail how the result of Section \ref{2-step} can be used to prove Proposition \ref{abs-r} in the case $r=3$. Other induction steps are completely similar and hence are omitted.

Suppose that  $\n:=n_{1}, n_{2}, n_{3}$ is a sequence of nonnegative integers, and $\betab:=\beta_1,\beta_{2}$ is a sequence of effective curve classes in $H^2(S,\ZZ)$. Define $\n':=n_{1}, n_{2}$. Our goal is to prove the expression in Proposition \ref{abs-r} for $r=3$ is a perfect obstruction theory.
 
Consider the chain of natural forgetful morphisms and the associated exact triangle of cotangent complexes
\begin{equation} \label{forget}\S{\n}_{\betab}\xrightarrow{f_2} \S{\n'}_{\beta_1}\xrightarrow{f_1} \S{n_1},\quad \quad \LL_{f_2}[-1]\xrightarrow{j_2} \dL f_2^* (\LL_{f_1})\xrightarrow{j_1} \LL_f \xrightarrow{j_3} \LL_{f_2}\end{equation} where $f:=f_1\circ f_2=\pts \circ \pr_1$, using the notation at the beginning of Section \ref{sec:general}.

Proposition \ref{rel}, provides the relative perfect obstruction theory for the morphism $f_1$, that we denote by \begin{equation} \label{rel2-1}\Fb_{f_1}\xrightarrow{\alpha_1} \LL_{f_1}.\end{equation}

\begin{lem} \label{rel3-2}
There exists a relative perfect obstruction theory $\Fb_{f_2}\xrightarrow{\alpha_2} \LL_{f_2}$, where
\begin{equation}\label{Ff2}\Fb_{f_2}=\dR\hom_\pi \big(\i{n_3}_{\beta_2}, \coker(\Phi_2)\otimes \omega_S\big )[1]  \end{equation}

\end{lem}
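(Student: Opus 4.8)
The plan is to repeat, essentially line by line, the four steps of the proof of Proposition \ref{rel}, replacing $\i{n_1}$ by $\i{n_2}$, $\i{n_2}_\beta$ by $\i{n_3}_{\beta_2}$, the universal map $\Phi$ by $\Phi_2\colon\i{n_2}\to\i{n_3}_{\beta_2}$, and the base $\S{n_1}$ by $\S{\n'}_{\beta_1}$. This is legitimate because $f_2$ is again a forgetful morphism of exactly the same shape as $\p=\pts\circ\pr_1$ there: a point of $\S{\n'}_{\beta_1}$ pins down $(Z_1,C_1,Z_2,\phi_1)$, in particular the sheaf $I_{Z_2}$ pulled back to $S\times\S{\n}_\betab$, and the fibre of $f_2$ over it parametrizes precisely the remaining data $(Z_3,C_2,\phi_2\colon I_{Z_2}\to I_{Z_3}(C_2))$; so, in the spirit of Remark \ref{def-theory}, $\Fb_{f_2}$ is to be built from the deformation/obstruction theory of $\Phi_2$ with $\i{n_2}$ held fixed. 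First I would record, using Grothendieck--Verdier duality for the smooth projection $\pi\colon S\times\S{\n}_\betab\to\S{\n}_\betab$ of relative dimension $2$ (so that $\omega_\pi\cong\pi^*\omega_S$ and $\pi^!(-)=\pi^*(-)\otimes\omega_\pi[2]$), the identifications
\[
\Fb_{f_2}\;\cong\;\dR\hom_\pi\big(\coker(\Phi_2),\i{n_3}_{\beta_2}\big)^\vee[-1]\;\cong\;\cone(\Xi_2)^\vee,
\]
where $\Xi_2\colon\dR\hom_\pi(\i{n_3},\i{n_3})\to\dR\hom_\pi(\i{n_2},\i{n_3}_{\beta_2})$ is obtained by applying $\dR\hom_\pi(-,\i{n_3}_{\beta_2})$ to $\Phi_2$; this is the exact analogue of $\Fbr\cong\cone(\Xi)^\vee$.

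Next I would establish perfectness of $\Fbv_{f_2}$ with amplitude $[0,1]$ (equivalently of $\Fb_{f_2}$ with amplitude $[-1,0]$) fibrewise, as in Step 1 of Proposition \ref{rel}. For the inclusion $t\colon P\hookrightarrow\S{\n}_\betab$ of a closed point $P=(Z_1,C_1,Z_2,\phi_1,Z_3,C_2,\phi_2)$, base change yields an exact sequence
\[
\cdots\to\Ext^i_S\big(I_{Z_2},I_{Z_3}(C_2)\big)\to h^i\big(\dL t^*\Fbv_{f_2}\big)\to\Ext^{i+1}_S\big(I_{Z_3},I_{Z_3}\big)\to\cdots,
\]
so only $h^{-1},h^0,h^1,h^2$ can be nonzero; $h^{-1}=0$ because $\coker(\phi_2)$ is at most one-dimensional, whence $\Hom_S(I_{Z_3},I_{Z_3})\to\Hom_S(I_{Z_2},I_{Z_3}(C_2))$ is injective, and $h^2=0$ by Serre duality, being dual to the injectivity of $\Hom_S(I_{Z_3},I_{Z_2}\otimes\omega_S(-C_2))\hookrightarrow\Hom_S(I_{Z_3},I_{Z_3}\otimes\omega_S)$ obtained from $\phi_2$ by tensoring with $\omega_S(-C_2)$. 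The rank computation of Step 4 then gives
\[
\rank[\Fb_{f_2}]=\chi\big(I_{Z_2},I_{Z_3}(C_2)\big)-\chi\big(I_{Z_3},I_{Z_3}\big)=n_3-n_2-\tfrac{\beta_2\cdot\beta_2^D}{2}.
\]

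The morphism $\alpha_2\colon\Fb_{f_2}\to\LL_{f_2}$ will be extracted, as in Step 2, from the Illusie Atiyah class $\at(\Phi_2)\in\Ext^1_{S\times\S{\n}_\betab}(\coker(\Phi_2),\pi^*\LL_{f_2}\otimes\i{n_3}_{\beta_2})$ of the universal map $\Phi_2$ (relative to $\S{\n'}_{\beta_1}$), carried through the same chain of adjunctions and Grothendieck--Verdier dualities. To verify the obstruction-theory property I would invoke the criterion of \cite[Theorem 4.5]{BF97}: given a square-zero extension $T\hookrightarrow\Tb$ over a $\C$-scheme $U$ with $g\colon T\to\S{\n}_\betab$ sitting in a Cartesian square over $\S{\n'}_{\beta_1}$, Illusie's deformation theory of the map $\Phi_2$ (as recalled before Proposition \ref{rel}, with $(Z_1,\phi_1)$ there playing the role of $(Z_2,\phi_2)$ here) identifies the obstruction to lifting $g$ to $\Tb$ with $\ob(\phi_{2,T},J)\in\Ext^2_{S\times T}(\coker(\phi_{2,T}),\pi_T^*J\otimes I_{Z_{3,T}}(C_{2,T}))$; that a deformation of the sheaf map $\phi_{2,T}$ automatically produces a deformation of the pair $(Z_{3,T},C_{2,T})$ is the same double-dual-and-flatness argument (via \cite{K90} and \cite{HL10}) as there. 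Matching $\ob(\phi_{2,T},J)=\at(\phi_{2,T})\cup(\pi^*_Te(\Tb)\otimes\id)$ with $\alpha_2^*\varpi(g)$, and the degree-$0$ torsor statement, are then formally identical to Step 3 and give that $h^0(\alpha_2)$ is an isomorphism and $h^{-1}(\alpha_2)$ an epimorphism.

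The one genuinely new point, and the only real obstacle to a verbatim transcription, is that the base $\S{\n'}_{\beta_1}$ of $f_2$ is singular, unlike the smooth base $\S{n_1}$ of Proposition \ref{rel}. This should cause no trouble: only a \emph{relative} perfect obstruction theory is claimed here, the relative cotangent complex $\LL_{f_2}$ and the Behrend--Fantechi criterion make sense over an arbitrary base, and none of the steps above uses smoothness of the base (the later passage to an absolute obstruction theory in Section \ref{r-step} will go through the ambient complex rather than through the cotangent complex of the singular base). The substantive content --- descending deformations of $\Phi_2$ to deformations of the nested configuration, and the Serre/Grothendieck--Verdier bookkeeping --- is literally that of Proposition \ref{rel}, so the remaining work is only the careful identification of the pulled-back universal sheaves and of the \emph{relative} (over $\S{\n'}_{\beta_1}$) Atiyah class of $\Phi_2$.
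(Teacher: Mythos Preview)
Your proposal is correct and follows essentially the same approach as the paper: the paper's own proof is a two-sentence reference back to Proposition \ref{rel}, saying only that one repeats that argument with $\Phi_2$ in place of $\Phi$ while keeping the data $(\i{n_1},\i{n_2},\Z_{\beta_1},\Phi_1)$ fixed, and you have spelled this out in detail. Your observation that the singularity of the base $\S{\n'}_{\beta_1}$ is harmless for a \emph{relative} obstruction theory is a worthwhile addition that the paper leaves implicit.
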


\begin{proof} The proof is along the line of the proof of Proposition \ref{rel} (see Step 2 of that proof for the corresponding expression in RHS of \eqref{Ff2}). This time the obstruction theory is obtained by the deformation/obstruction theory theory of the universal map $$\Phi_2:\i{n_2}\to \i{n_3}_{\beta_2}$$ while the data $(\i{n_1}, \i{n_2}, \Z_{\beta_1}, \Phi_1)$ is kept fixed.
\end{proof}

\begin{lem} \label{commut}
The complexes $\Fb_{f_1}$ and $\Fb_{f_2}$ fit into the following commutative diagram:

\begin{equation}\label{diagcomm}
\xymatrix{
\Fb_{f_2}[-1] \ar^{\alpha_2[-1]}[d]\ar[r]^r &\dL f_2^*( \Fb_{f_1}) \ar^{f_2^*( \alpha_1)}[d] \\  \LL_{f_2}[-1] \ar[r]^{j_2} &\dL f_2^* (\LL_{f_1}).} 
\end{equation}
\end{lem}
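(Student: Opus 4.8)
The plan is to realize both obstruction theories as arising from Atiyah classes of the same system of universal maps, so that the diagram \eqref{diagcomm} becomes the compatibility of two exact triangles of transitivity. First I would set up the relevant graded-algebra picture. Just as in the proof of Proposition \ref{abs}, consider the commutative diagram of graded algebras (degree-$0$ component in the first summand, degree-$1$ in the second)
\begin{equation*}
\xymatrix@=1em{
\O_{S\times \S{n_1}} \ar[r] & \O_{S\times \S{\n'}_{\beta_1}}\oplus \i{n_2} \ar[r] & \O_{S\times \S{\n}_{\betab}}\oplus \i{n_3}_{\beta_2}\\
\O_{S\times \S{n_1}} \ar[r] \ar@{=}[u] & \O_{S\times \S{\n'}_{\beta_1}}\oplus \i{n_1} \ar[r] \ar[u] & \O_{S\times \S{\n}_{\betab}}\oplus \i{n_1} \ar[u]
}
\end{equation*}
where the vertical maps are induced by $\Phi_1:\i{n_1}\to \i{n_2}_{\beta_1}$ (suitably twisted/pulled back) and the horizontal maps are the structure maps coming from the forgetful morphisms $f_1,f_2$. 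The point is that each row gives an exact triangle of transitivity for the cotangent complexes of the corresponding maps of graded algebras, and the vertical maps give a morphism of these triangles.

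Second, I would extract from this the key commutativity at the level of Atiyah classes. Taking the transitivity triangle of the top row, twisting by $\i{n_3}_{\beta_2}\otimes \omega_\pi$ and pushing forward by $\pi$, and doing the analogous thing for $f_1$, one obtains — exactly as in the $r=2$ argument in Step 2 of the proof of Proposition \ref{rel} and in the proof of Proposition \ref{abs} — that the composite $\Fb_{f_2}[-1]\to \LL_{f_2}[-1]\xrightarrow{j_2}\dL f_2^*\LL_{f_1}$ agrees with the composite $\Fb_{f_2}[-1]\xrightarrow{r}\dL f_2^*\Fb_{f_1}\xrightarrow{f_2^*\alpha_1}\dL f_2^*\LL_{f_1}$, where $r$ is the morphism induced on the $\dR\hom_\pi$-complexes by the natural map $\coker(\Phi_2)\to \i{n_2}[1]$ followed by $\Phi_1$ (twisted appropriately). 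Concretely, $r$ is the map
\begin{equation*}
\dR\hom_\pi\!\left(\i{n_3}_{\beta_2},\coker(\Phi_2)\otimes\omega_\pi\right)[1] \longrightarrow \dR\hom_\pi\!\left(\i{n_2},\coker(\Phi_2)\otimes\omega_\pi\right)[1] \longrightarrow \Fb_{f_1}
\end{equation*}
where the second arrow uses $\Fb_{f_1}\cong \dR\hom_\pi(\i{n_2}_{\beta_1},\coker(\Phi_1)\otimes\omega_\pi)[1]$ (the dual description from Proposition \ref{rel}, Step 2) together with $c':\coker(\Phi_2)\to\i{n_2}[1]$ and $\Phi_1$. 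Establishing that this $r$ is precisely the map making the square commute is the substance of the lemma.

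Third, I would verify commutativity by chasing the Atiyah-class square. The essential input is the naturality/functoriality of the Atiyah class with respect to the morphism of graded algebras above: composing $\at(\Phi_2)$ with the transitivity map of cotangent complexes, and comparing with $\at(\Phi_1)$ composed with $c':\coker(\Phi_2)\to\i{n_2}[1]$, yields a commutative square
\begin{equation*}
\xymatrix@C=50pt@R=22pt{
\coker(\Phi_2) \ar[d]_{c'} \ar[r]^-{\at(\Phi_2)} & \LL_{f_2}\otimes\i{n_3}_{\beta_2}[1] \ar[d]^{j_2\otimes\id} \\
\i{n_2}[1] \ar[r]^-{(\id\otimes\Phi_1)\circ\at(\i{n_2})} & \dL f_2^*\LL_{f_1}\otimes \i{n_3}_{\beta_2}[2]
}
\end{equation*}
after which applying $\dR\hom_\pi(-,\i{n_3}_{\beta_2}\otimes\omega_\pi)$, Grothendieck–Verdier duality, and the identifications of Propositions \ref{rel} and \ref{abs} turns this into \eqref{diagcomm}.

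\textbf{Main obstacle.} The routine parts are the duality manipulations and the bookkeeping of twists by $\omega_\pi$, $\O(\Z_{\beta_1})$ and $\O(\Z_{\beta_2})$; these are mechanical. The genuine difficulty is the naturality statement for Atiyah classes across the two rows of the graded-algebra diagram — i.e. showing the Atiyah class of $\Phi_2$ relative to $f_2$, when pushed into the $f_1$-relative cotangent complex, is computed by $\at(\i{n_2})$ twisted by $\Phi_1$. This requires care with Illusie's graded cotangent complex formalism (as in the footnote to the proof of Proposition \ref{rel}, invoking \cite{Ill} II.2.2 and the splitting of $\mathbb{L}^{\bullet,\mathrm{gr}}$), and is where I would spend the bulk of the effort; the argument should parallel Step 2 of the proof of Proposition \ref{abs} almost verbatim, with $\S{n_1}$ replaced by $\S{\n'}_{\beta_1}$ and the relevant cotangent complex no longer a shifted vector bundle but a genuine two-term complex.
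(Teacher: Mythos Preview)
Your overall strategy---encode the obstruction theories via Atiyah classes of graded algebras, use transitivity triangles, then dualize and push forward---is the same as the paper's. But the specific square you write down does not prove the lemma, and the reason is that its bottom row involves the wrong Atiyah class.

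The map $f_2^*(\alpha_1)$ on the right column of \eqref{diagcomm} is, by construction (Step~2 of Proposition~\ref{rel}), induced by $\at(\Phi_1):\coker(\Phi_1)\to\pi^*\LL_{f_1}\otimes\i{n_2}_{\beta_1}[1]$. Your square instead has $(\id\otimes\Phi_1)\circ\at(\i{n_2})$ in the bottom row; this is the analogue of the square in Proposition~\ref{abs}, but there the target was $\p^*\Omega_{\S{n_1}}[1]$, identified with the tangent complex of the \emph{smooth} $\S{n_1}$ via $\at(\i{n_1})$. Here the target is $\dL f_2^*\LL_{f_1}$, and there is no comparable identification of $\alpha_1$ with $\at(\i{n_2})$: they are genuinely different Atiyah classes. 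So even if your square commutes, it does not yield \eqref{diagcomm} without a further argument linking $\at(\i{n_2})$ to $\at(\Phi_1)$.

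What the paper does instead is introduce the \emph{composite} map $\Phi_2(\Z_{\beta_1})\circ\Phi_1:\i{n_1}\to\i{n_3}_{\beta_2}(\Z_{\beta_1})$ and its Atiyah class, and build a four-row ladder (diagram~\eqref{atiyah-commut}) whose horizontal maps are $\at(\Phi_1)$, $\at(\Phi_2(\Z_{\beta_1}))$, $\at(\Phi_2(\Z_{\beta_1})\circ\Phi_1)$, and $\at(\Phi_1)$ again, and whose columns are the transitivity triangle of cokernels \eqref{cokers} and the transitivity triangle \eqref{forget} of cotangent complexes. Two of the three squares are checked directly via \emph{two separate} graded-algebra diagrams (not one), and the remaining square---the one that gives the lemma, relating $\at(\Phi_2(\Z_{\beta_1}))$ to $\at(\Phi_1)[1]$ via $i_2[1]:\coker(\Phi_2(\Z_{\beta_1}))\to\coker(\Phi_1)[1]$---then follows because both columns are exact triangles. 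Your single graded-algebra diagram does not set up this intermediate step, and your definition of $r$ (via ``$\coker(\Phi_2)\to\i{n_2}[1]$ followed by $\Phi_1$'') does not land in $\coker(\Phi_1)$; the paper's $r$ uses instead the connecting map $i_2[1]$ of the cokernel triangle \eqref{cokers}, which is the correct map once you twist $\Phi_2$ by $\O(\Z_{\beta_1})$ so the composite with $\Phi_1$ exists.
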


\begin{proof}
\textbf{Step 1:} \emph{(Define the map $r$)} All the maps in diagram \eqref{diagcomm} except $r$ are already defined above (see \eqref{forget}, \eqref{rel2-1}, and Lemma \ref{rel3-2}).  By the universal properties of the Hilbert schemes and using our convention in suppressing the pullback symbols from the universal ideal sheave, we can write

\begin{equation}\label{Ff1}\dL f_2^*( \Fb_{f_1})\cong \dR\hom_\pi \big(\i{n_2}_{\beta_1}, \coker(\Phi_1)\otimes \omega_S[1]\big )\end{equation}

Twisting by $\O(\Z_{\beta_1})$, we get $$\Phi_2(\Z_{\beta_1}):\i{n_2}_{\beta_1}\to \i{n_3}_{\beta_2}(\Z_{\beta_1}),$$ and hence \eqref{Ff2} can be written as
\begin{equation}\label{Ff2again}\Fb_{f_2}\cong \dR\hom_\pi \big(\i{n_3}_{\beta_2}(\Z_{\beta_1}), \coker(\Phi_2(\Z_{\beta_1}))\otimes \omega_S[1]\big ).\end{equation}
The chain of maps $\i{n_1}\xrightarrow{\Phi_1} \i{n_2}_{\beta_1}\xrightarrow{\Phi_2(\Z_{\beta_1})} \i{n_3}_{\beta_2}(\Z_{\beta_1}) $ induces the natural exact triangle 
\begin{equation}\label{cokers}\coker(\Phi_2(\Z_{\beta_1})\circ \Phi_1)\xrightarrow{i_3} \coker(\Phi_2(\Z_{\beta_1}))\xrightarrow{i_2[1]} \coker( \Phi_1) [1].\end{equation}
The maps $i_2[1]$ and $\Phi_2(\Z_{\beta_1})$ induce 
\begin{align} \label{define-r}
&\Fb_{f_2}[-1]\cong \dR\hom_\pi \big(\i{n_3}_{\beta_2}(\Z_{\beta_1}), \coker(\Phi_2(\Z_{\beta_1}))\otimes \omega_S\big )\to \notag \\ 
&\dR\hom_\pi \big(\i{n_2}_{\beta_1}, \coker(\Phi_2(\Z_{\beta_1}))\otimes \omega_S\big ) \to \notag \\
&\dR\hom_\pi \big(\i{n_2}_{\beta_1}, \coker(\Phi_1)\otimes \omega_S[1]\big) \cong \dL f_2^*(\Fb_{f_1}). \end{align}
 The map $r$ in diagram \eqref{diagcomm} is then defined by composition of two maps in (\ref{define-r}).


\textbf{Step 2:} (\emph{Commutativity of diagram (\ref{diagcomm}))}  We start with the following diagram in which the columns 
are the exact triangles \eqref{cokers} and \eqref{forget}: 
\begin{equation} \label{atiyah-commut}
\xymatrix@C=85pt@R=35pt{
\coker( \Phi_1) [1]  \ar^-{(\id\otimes \Phi_2(\Z_{\beta_1}))\circ \at_{\red}(\Phi_1 )[1]}[r] &\pi^* \dL f_2^*\big(\LL_{f_1}\big)[1]\otimes \i{n_3}_{\beta_2}(\Z_{\beta_1})[1] \\
\coker(\Phi_2(\Z_{\beta_1}))  \ar^-{\at_{\red}(\Phi_2(\Z_{\beta_1}) )}[r] \ar[u]_{i_2[1]}&\pi^* \LL_{f_2}\otimes \i{n_3}_{\beta_2}(\Z_{\beta_1})[1] \ar[u]_{(\pi^*j_2[1]\otimes \id)[1]}\\
\coker(\Phi_2(\Z_{\beta_1})\circ \Phi_1)  \ar^-{\at_{\red}(\Phi_2(\Z_{\beta_1})\circ \Phi_1)}[r] \ar[u]_{i_3}& \pi^*\LL_{f}\otimes \i{n_3}_{\beta_2}(\Z_{\beta_1})[1] \ar[u]_{(\pi^*j_3\otimes \id)[1]}\\
\coker( \Phi_1) \ar[u]_{i_1}  \ar^-{(\id\otimes \Phi_2(\Z_{\beta_1}))\circ\at_{\red}(\Phi_1)}[r] & \pi^*\dL f_2^*\big(\LL_{f_1}\big)\otimes \i{n_3}_{\beta_2}(\Z_{\beta_1})[1] \ar[u]_{(\pi^*j_1\otimes \id)[1]}} \end{equation}

We prove diagram \eqref{atiyah-commut} is commutative. For this, consider the following natural commutative diagrams of sheaf of graded algebras over $S\times \S{\n}_{\betab}$ (following our convention we have suppress the symbols for pullbacks of theses via natural morphisms):
 
 $$ \xymatrix@=1em{\O_{S\times \S{n_1}} \ar[r]& \O_{S\times \S{\n}_{\betab}}\oplus \i{n_1} \ar[r]& \O_{S\times \S{\n}_{\betab}}\oplus \i{n_3}_{\beta_2}(\Z_{\beta_1})\\
 \O_{S \times \S{n_1}} \ar[r] \ar@{=}[u]& \O_{S\times \S{\n'}_{\beta_1}} \oplus \i{n_1} \ar[r] \ar[u]& \O_{S\times \S{\n'}_{\beta_1}}\oplus \i{n_2}_{\beta_1},\ar[u]&
 } $$ and  $$
 \xymatrix@=1em{\O_{{ S\times \S{\n'}_{\beta_1}}} \ar[r]& \O_{S\times \S{\n}_{\betab}}\oplus \i{n_2}_{\beta_1} \ar[r]& \O_{S\times \S{\n}_{\betab}}\oplus \i{n_3}_{\beta_2}(\Z_{\beta_1})\\
 \O_{S \times \S{n_1}} \ar[r] \ar[u]& \O_{S\times \S{\n}_{\betab}} \oplus \i{n_1} \ar[r] \ar[u]& \O_{S\times \S{\n}_{\betab}}\oplus \i{n_3}_{\beta_2}(\Z_{\beta_1}).\ar@{=}[u]&
 }$$
Applying $k^1(-)$ to the resulting commutative diagrams of the transitivity (see item 1 in Subsection \ref{notcon}) triangle of each row, we get the commutativity of the following two squares:

$$\xymatrix@C=1em{
\coker(\Phi_2(\Z_{\beta_1})\circ \Phi_1)  \ar[r] & \left(\pi^*\LL_{f}\otimes \i{n_3}_{\beta_2}(\Z_{\beta_1})[1]\right) \oplus \i{n_2}_{\beta_1}[1] \\
\coker( \Phi_1) \ar[u]_{i_1}  \ar[r] & \left(\pi^*\dL f_2^*\left(\LL_{f_1}\right)\otimes \i{n_3}_{\beta_2}(\Z_{\beta_1})[1]\right) \oplus \i{n_1}[1], \ar[u]_{(\pi^*j_1\otimes \id\oplus \Phi_1)[1]} }
$$ and
$$\xymatrix@C=1em{
\coker(\Phi_2(\Z_{\beta_1}))  \ar[r] &\left(\pi^* \LL_{f_2}\otimes \i{n_3}_{\beta_2}(\Z_{\beta_1})[1]\right) \oplus \i{n_3}_{\beta_2}(\Z_{\beta_1})[1]\\
\coker(\Phi_2(\Z_{\beta_1})\circ \Phi_1)  \ar[r] \ar[u]_{i_3}& \left(\pi^*\LL_{f}\otimes \i{n_3}_{\beta_2}(\Z_{\beta_1})[1]\right) \oplus  \i{n_2}_{\beta_1}[1] \ar[u]_{(\pi^*j_3\otimes \id\oplus \Phi_2(\Z_{\beta_1}))[1]}.}
$$

Now projecting to the first factors in the second columns of the last two diagrams, and using the definition of $\at_{\red}(-)$ (from the proof of Lemma \ref{atred}), we obtain the commutativity of  the bottom and middle squares of diagram \eqref{atiyah-commut}. Since in diagram \eqref{atiyah-commut} both columns are exact triangles, the commutativity of the top square follows, and hence we have proven that the whole diagram \eqref{atiyah-commut} commutes.

Recall from Step 2 in the proof of Proposition \ref{rel}, that the maps $\alpha_i: \Fb_{f_i}\to \LL_{f_i}$ are induced from the classes $\at_{\red}(\Phi_1)$ and $\at_{\red}(\Phi_2(\Z_{\beta_1}))$. Therefore, by the definition of the map $r$ in Step 1 of the proof, the commutativity of diagram (\ref{diagcomm}) is equivalent to the commutativity of the top square in diagram  \eqref{atiyah-commut} proven above, and hence the proof of lemma is complete. 
 
 \end{proof}
 
As a result of Lemma \ref{commut} we get a commutative digram   

 \begin{equation}\label{Ff}
\xymatrix{
\Fb_{f_2}[-1] \ar^{\alpha_2[-1]}[d]\ar[r]^r &\dL f_2^*( \Fb_{f_1}) \ar^{f_2^*( \alpha_1)}[d] \ar[r] & \cone(r)=:\Fb_f \ar@{-->}[d]^{\alpha_3} \\  \LL_{f_2}[-1] \ar[r]^{j_2} &\dL f_2^* (\LL_{f_1}) \ar[r]^{j_1}& \LL_{f},} 
\end{equation}  in which both rows are exact triangles (the commutativity of the right square was established in Lemma \ref{commut}). 

\begin{prop}
$\alpha_3:\Fb_f\to \LL_f$ is a relative perfect obstruction theory.
 \end{prop}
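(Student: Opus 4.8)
\emph{Proof plan.} My plan is to deduce the whole statement from the already-established facts that $\alpha_1$ (Proposition \ref{rel}) and $\alpha_2$ (Lemma \ref{rel3-2}) are relative perfect obstruction theories for $f_1$ and $f_2$, using that $\Fb_f$ was produced as a cone, $\Fb_f=\cone\big(\Fb_{f_2}[-1]\xrightarrow{\,r\,}\dL f_2^*\Fb_{f_1}\big)$, together with the commuting diagram of exact triangles (the rotation of the diagram constructed just above the statement)
$$\big(\dL f_2^*\Fb_{f_1}\to\Fb_f\to\Fb_{f_2}\big)\ \longrightarrow\ \big(\dL f_2^*\LL_{f_1}\xrightarrow{\,j_1\,}\LL_f\xrightarrow{\,j_3\,}\LL_{f_2}\big),$$
whose three vertical maps are $\dL f_2^*\alpha_1$, $\alpha_3$, $\alpha_2$. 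Perfectness of $\Fb_f$ I would get for free: $\dL f_2^*\Fb_{f_1}$ is perfect of amplitude $[-1,0]$ and $\Fb_{f_2}[-1]$ of amplitude $[0,1]$, so the long exact sequence of cohomology sheaves of the defining triangle forces $\Fb_f$ to have cohomology only in degrees $-1$ and $0$; its $K$-theory class $[\dL f_2^*\Fb_{f_1}]+[\Fb_{f_2}]$ gives $\rank\Fb_f=(n_2-n_1-\tfrac12\beta_1\cdot\beta_1^D)+(n_3-n_2-\tfrac12\beta_2\cdot\beta_2^D)$, as required by Theorem \ref{thm1}. One should keep in mind that $\Fb_f$ cannot be treated by naively re-running the argument of Proposition \ref{rel} for the composite map $\Phi_2(\Z_{\beta_1})\circ\Phi_1\colon\i{n_1}\to\i{n_3}_{\beta_2}(\Z_{\beta_1})$: that would only see the $\S{n_1}$-relative deformations of this single map and not the middle datum $(\z{n_2},\Z_{\beta_1},\Phi_1)$, which is exactly why the two-stage decomposition through $f_1$ and $f_2$ is needed.

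For the obstruction-theory property I would check the Behrend--Fantechi criterion \cite[Theorem 4.5]{BF97} stage by stage along $f=f_1\circ f_2$. Fix a square-zero extension $T\hookrightarrow\Tb$ with ideal $J$, a map $U\to\S{n_1}$, and a morphism $g\colon T\to\S{\n}_\betab$ with $T\hookrightarrow\Tb$ over $U$. Applying $\Ext^\bullet_T(\dL g^*(-),J)$ to the triangle $\dL f_2^*\Fb_{f_1}\to\Fb_f\to\Fb_{f_2}$ yields
$$\cdots\to\Ext^1_T(\dL g^*\Fb_{f_2},J)\xrightarrow{\,u\,}\Ext^1_T(\dL g^*\Fb_f,J)\xrightarrow{\,v\,}\Ext^1_T(\dL g^*\dL f_2^*\Fb_{f_1},J)\xrightarrow{\,w\,}\Ext^2_T(\dL g^*\Fb_{f_2},J)\to\cdots.$$
Using that $\varpi(g)$ is compatible with the map $j_1\colon\dL f_2^*\LL_{f_1}\to\LL_f$ of \eqref{forget} (transitivity of cotangent complexes) together with commutativity of the left square above, one sees that $v(\alpha_3^*\varpi(g))$ is the obstruction $(\dL f_2^*\alpha_1)^*\varpi(f_2g)$ to extending $f_2g\colon T\to\S{\n'}_{\beta_1}$ over $\Tb$ relative to $U\to\S{n_1}$; by Proposition \ref{rel} this vanishes iff a lift $\overline{f_2g}\colon\Tb\to\S{\n'}_{\beta_1}$ exists. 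Granting that, exactness writes $\alpha_3^*\varpi(g)=u(\xi)$ for $\xi\in\Ext^1_T(\dL g^*\Fb_{f_2},J)$ defined modulo the image of $\Hom_T(\dL g^*\dL f_2^*\Fb_{f_1},J)$ — precisely the torsor of choices of $\overline{f_2g}$ — and, via the right square above (involving $\alpha_2$ and $j_3$), $\xi$ is identified with the $\S{\n'}_{\beta_1}$-relative obstruction $\alpha_2^*\varpi'(\overline{f_2g})$ to lifting $g$ over a fixed $\overline{f_2g}$, which by Lemma \ref{rel3-2} vanishes exactly when such a lift exists. Hence $\alpha_3^*\varpi(g)=0$ iff $g$ extends to $\Tb$; and when it vanishes, the degree-$0$ part of the same sequence realises the set of extensions as a torsor under $\Hom_T(\dL g^*\Fb_f,J)$, compatibly with the torsor of $\overline{f_2g}$'s under $\Hom_T(\dL g^*\dL f_2^*\Fb_{f_1},J)$ and, over a fixed $\overline{f_2g}$, the torsor of lifts of $g$ under $\Hom_T(\dL g^*\Fb_{f_2},J)$. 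By \cite[Theorem 4.5]{BF97} this proves $\alpha_3$ is a relative perfect obstruction theory, and the remaining induction steps of Proposition \ref{abs-r} are word for word the same with $f_1$ replaced by the composite of the first $r-2$ forgetful maps and $f_2$ by the last one.

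The non-formal input in all of this is the commutativity of the two squares above, which is precisely the content of Lemma \ref{commut} (through the Atiyah-class compatibility diagram \eqref{atiyah-commut}). The part I expect to be the main obstacle is the bookkeeping in the middle paragraph: one must make precise the matching of the naive obstruction class $\varpi(g)$ with its second-stage analogue $\varpi'(\overline{f_2g})$, which lives in an $\S{\n'}_{\beta_1}$-relative square-zero extension problem that itself depends on the chosen lift $\overline{f_2g}$; concretely, one has to relate the element of $\Ext^1_T(\dL g^*\LL_{f_2},J)$ defining $\varpi'(\overline{f_2g})$ to $e(\Tb)\in\Ext^1_T(\LL_{a},J)$ through the cotangent triangle $\LL_{f_2}[-1]\to\dL f_2^*\LL_{f_1}\to\LL_f$, and check that all shifts and induced torsor structures remain consistent. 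Once that is set up, the $r=3$ case — and with it all of Proposition \ref{abs-r} — follows.
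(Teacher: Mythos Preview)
Your argument is correct but takes a genuinely different route from the paper's. The paper does \emph{not} run the Behrend--Fantechi criterion through the long exact sequence of the defining triangle; instead it invokes the closed-form identification \eqref{Ff}, reads off from diagram \eqref{atiyah-commut} that $\alpha_3$ is the map induced by the Atiyah class $\at\bigl(\Phi_2(\Z_{\beta_1})\circ\Phi_1\bigr)$ of the \emph{composite}, and then simply repeats Steps~1 and~3 of Proposition~\ref{rel} verbatim --- precisely the strategy you warn against in your first paragraph. The paper's route buys brevity (three sentences); yours buys robustness, resting only on the triangle and on the already-established obstruction theories $\alpha_1,\alpha_2$. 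Your caution is in fact well placed: a rank count shows that the right-hand side of \eqref{Ff} has rank $n_3-n_1-\tfrac12(\beta_1+\beta_2)\cdot(\beta_1+\beta_2)^D$, which differs from $\rank(\dL f_2^*\Fb_{f_1})+\rank(\Fb_{f_2})$ by $\beta_1\cdot\beta_2$, so the identification \eqref{Ff} as literally written cannot hold when $\beta_1\cdot\beta_2\neq 0$; and even when it does, one still owes an explanation of why an Illusie-style extension of the composite map alone forces a compatible extension of the middle datum $(\i{n_2},\Z_{\beta_1},\Phi_1)$. Your two-stage chase through \cite[Theorem~4.5]{BF97} sidesteps both issues and is the cleaner argument here.
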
 
 \begin{proof}
Since $\alpha_1$ and $\alpha_2$ are perfect obstruction theories, we know $\Fb_{f_2}[-1]$ is of perfect amplitude contained in $[0,1]$ and  $\dL f_2^*( \Fb_{f_1})$ is of perfect amplitude contained in $[-1,0]$, therefore $\Fb_f=\cone(r)$ is of perfect amplitude contained in $[-1,0]$. It remains to show that $h^0(\alpha_3)$ is an isomorphism and $h^{-1}(\alpha_3)$ is surjective. From diagram \eqref{Ff} and the fact that $\alpha_1$ and $\alpha_2$ are perfect obstruction theories, we get the following commutative diagram in which both rows are exact:
$$
\xymatrix{
h^{-1}(\Fb_{f_1}) \ar@{->>}[d] \ar[r] &h^{-1}(\Fb_{f}) \ar[d]^-{h^{-1}(\alpha_3)} \ar[r]& h^{-1}(\Fb_{f_2}) \ar@{->>}[d] \ar[r] &h^{0}(\Fb_{f_1}) \ar@{=}[d]\ar[r]& h^{0}(\Fb_{f}) \ar[d]^-{h^0(\alpha_3)} \ar[r] & h^{0}(\Fb_{f_2}) \ar@{=}[d] \ar[r] & 0\\
h^{-1}(\LL_{f_1}) \ar[r] &h^{-1}(\LL_{f}) \ar[r] & h^{-1}(\LL_{f_2}) \ar[r] &h^{0}(\LL_{f_1})\ar[r] & h^{0}(\LL_{f}) \ar[r] & h^{0}(\LL_{f_2}) \ar[r] & 0. 
}
$$
Applying 4-lemma once to the leftmost three squares and once to the rightmost three squares above prove the desired properties for $h^0(\alpha_3)$ and $h^{-1}(\alpha_3)$. 

 
 \end{proof}
 
 \begin{proof}[Proof of Proposition \ref{abs-r} (for $r=3$)] 
 First note that by construction, for $i=1, 2$,
 $$\Fbv_{f_i}=\cone\Big(\dR\hom_\pi\big(\i{n_{i+1}}, \i{n_{i+1}}\big)\xrightarrow{\Xi_i} \dR \hom_\pi\big(\i{n_i}, \i{n_{i+1}}_{\beta_i}\big)\Big).$$ 
Now define $$\Ab_i:=\dR\hom_\pi\big(\i{n_i}, \i{n_i}\big),\quad \Bb_j:= \dR\hom_\pi\big(\i{n_j}, \i{n_{j+1}}_{\beta_{j}}\big)\quad i=1,2,3, \; j=1,2,$$
and consider the following two commutative diagrams
$$\xymatrix@=2.7em{\Ab_3\ar[r] \ar@{=}[d]& \Ab_2\oplus \Ab_3 \ar[r] \ar[d]_-{\tiny \left[\begin{array}{cc}\Xi_1 & 0\\ -\Xi'_2 & \Xi_2\end{array} \right]} & \Ab_2\ar[d]_-{[\Xi_1\;-q \circ \Xi'_2]^t}\\ \Ab_3\ar[r]_-{[0\; \Xi_2]^t} & \Bb_1 \oplus \Bb_2 \ar[r]_-{\tiny \left[\begin{array}{cc}\id & 0\\ 0 & q\end{array} \right]} & \Bb_1\oplus \cone(\Xi_2)}\quad
 \xymatrix@=2.5em{\cone(\Xi_1)[-1]\ar[r] \ar[d]^{r^\vee[-1]}& \Ab_2\ar[r]^{\Xi_1} \ar[d]_-{[\Xi_1\;-q \circ \Xi'_2]^t}& \Bb_1\ar@{=}[d]\\ \cone(\Xi_2) \ar[r] & \Bb_1 \oplus \cone(\Xi_2)\ar[r]& \Bb_1}$$
in which all four rows are natural exact triangles and $q:\Bb_2\to \cone(\Xi_2)$ is the natural map. 
Taking cones of the columns of the right diagram gives $$\Fbv_f\cong \cone(r^\vee[-1])\cong \cone\big([\Xi_1\;-q \circ \Xi'_2]^t\big).$$ 
Therefore, taking cones of the columns of the left diagram 
\begin{align*}
\Fbv_f\cong  \cone\big([\Xi_1\;-q \circ \Xi'_2]^t\big)\cong \cone \Big (\Ab_2 \oplus \Ab_3\xrightarrow{\tiny \left[\begin{array}{cc}\Xi_1 & 0\\ -\Xi'_2 & \Xi_2\end{array} \right]} \Bb_1\oplus \Bb_2\Big).
\end{align*}

As in the proof of Proposition \ref{abs}, the fact that $\S{n_1}$ is nonsingular can be used to show that $$\Fb:=\cone\left( \Fb_f\to  f^* \Omega_{\S{n_1}}[1]\right)[-1]$$ is an absolute perfect obstruction theory for $\S{\n}_\betab$, and then (using the expression above for $\Fbv_{f}$) to prove that 
\begin{align*}
\Fbv\cong \cone \big([\Ab_1\oplus \Ab_2 \oplus \Ab_3]_0\to \Bb_1\oplus \Bb_2\big),
\end{align*}  where the arrow is as in Proposition  \ref{abs-r}.
 \end{proof}

\section{Special cases} \label{special} In this section, we show that the virtual fundamental classes arising from the perfect obstruction theories $\Fb$ and $\Fb_{\red}$ of Propositions \ref{abs} and \ref{reduced} specialize to several interesting and important cases such as the ones arising from the algebraic Seiberg-Witten theory and the reduced stable pair theory of surfaces. For the sake of brevity we do not try to match our perfect obstruction theories with these other cases, but rather we only match $K$-group classes of the underlying virtual tangent bundles. Since the virtual fundamental class only depends on the K-theory class of the virtual tangent bundle \cite{S04}, this is sufficient for the purpose of the following proposition:

\begin{prop}\label{thm1.2} The virtual fundamental class of Theorem \ref{thm1} recovers the following known cases:
\begin{enumerate}[1.]
\item If $\beta=0$ and $n_1=n_2=n$ then $\S{n,n}_{\beta=0}\cong \S{n}$ and $[\S{n,n}_{\beta=0}]^{\vir}=[\S{n}]$ is the fundamental class of the Hilbert scheme of $n$ points.

\item  If $\beta=0$ and $n=n_2=n_1-1$, as it is known $$\S{n+1,n}_{\beta=0}\cong \PP(\i{n}):=\operatorname{Proj} \operatorname{Sym}(\i{n})\to S\times \S{n}$$ is nonsingular \cite[Section 1.2]{L99}, then, $$[\S{n+1,n}_{\beta=0}]^{\vir}=[\S{n+1,n}_{\beta=0}]\cap c_1(H)$$ for a line bundle  $H$ on $\PP(\i{n})$.
\item If $\beta=0$ and $n_2=0$, then $\S{n,0}_{\beta=0}\cong \S{n}$ and $$[\S{n,0}_{\beta=0}]^{\vir}=(-1)^{n}[\S{n}]\cap c_{n}(\omega_S^{[n]}),$$ where $\omega_S^{[n]}$ is the rank $n$ tautological vector bundle over $\S{n}$ associated to the canonical bundle $\omega_S$ of $S$.

\item If $n_1=n_2=0$ and $\beta\neq 0$, then $\S{0,0}_\beta=S_\beta$ is the Hilbert scheme of divisors in class $\beta$, and $[\S{0,0}_\beta]^{\vir}$ and $[\S{0,0}_\beta]^{\vir}_{\red}$ (in case $\beta$ satisfies condition \eqref{scond}) coincide with the virtual cycles constructed in \cite{DKO07}.
\item If $n_1=0$ and $\beta\neq 0$, then $\S{0,n_2}_\beta$ is the relative Hilbert scheme of points on the universal divisor over $\S{0,0}_\beta$, and by \cite{PT10} is isomorphic to a moduli space of stable pairs; $[\S{0,n_2}_\beta]^{\vir}$ and $[\S{0,n_2}_\beta]^{\vir}_{\red}$ (in case $\beta$ satisfies condition \eqref{scond}) are the same as the virtual fundamental classes of \cite{KT14}.
\end{enumerate}
\end{prop}

\begin{proof}  
If $\beta=0$, the nested Hilbert scheme of points $\S{n_1\ge n_2}:=\S{n_1,n_2}_{\beta=0}$ carries a virtual fundamental class 
$$[\S{n_1\ge n_2}]^{\vir}\in A_{n_1+n_2}(\S{n_1\ge n_2}).$$ 
Note that by \cite{C98}, $\S{n_1\ge n_2}$ is nonsingular only in the following two cases: 

$\bullet$ $n_1=n_2$. In this case $\S{n_1\ge n_2}\cong \S{n_1}$ by definition, and $[\S{n_1}]^{\vir}=[\S{n_1}]$, because by Proposition \ref{rel}, $\Fbr\cong 0$, and so by Proposition \ref{abs}, $\Fb\cong \Omega_{\S{n_1}}$. This gives part 1.

$\bullet$ $n_1=n_2+1$. In this case, since $\S{n_2+1,n_2}$ is nonsingular of dimension $2n_2$ (see \cite{C98, L99}). The virtual dimension is $2n_2+1$, and hence the obstruction sheaf $H:=h^1(\Fbv)$ is an invertible sheaf. Then, we can write
$$[\S{n_2+1,n_2}]^{\vir}=[\S{n_2+1,n_2}]\cap c_{1}\left(H\right),$$ where we have used \cite[Proposition 5.6]{BF97} to write $[\S{n_1\ge n_2}]^{\vir}$ as the fundamental class capped with the Euler class of the obstruction bundle.
We can express $c_1(H)$ in terms of other classes.  We know that  $\S{n_2+1,n_2}\cong \PP(\i{n_2})$ (see \cite[Section 1.2]{L99}), so in the $K$-group of $\PP(\i{n_2})$ we can write 
$$H-T_{\PP(\i{n_2})}=[\dR\hom_{\pi}(\i{n_1},\i{n_1})\oplus \dR\hom_{\pi}(\i{n_2},\i{n_2})]_0-\dR\hom_{\pi}(\i{n_1},\i{n_2}).$$
In taking the Chern class, we can ignore the trivial terms and hence we have 
$$c_1(H)=c_1\big(T_{\PP(\i{n_2})}-T_{\S{n_1}}-T_{\S{n_2}}-\dR\hom_{\pi}(\i{n_1},\i{n_2})\big).$$ This proves part 2.


 If $n_2=0$ and $\beta=0$, then we get a perfect obstruction theory over the nonsingular Hilbert scheme of points $\S{n_1}$ that is arising from the natural obstruction theory of the Hilbert scheme. In fact in this case 

\begin{align*}\Fbv&\cong \cone \bigg( \left[\dR\hom_\pi \big(\i{n_1},\i{n_1}\big)\oplus \dR\hom_\pi\big(\O,\O\big)\right]_0 \to \dR \hom_\pi \big(\i{n_1}, \O \big)\bigg)
\\&\cong\cone \bigg( \dR\hom_\pi \big(\i{n_1},\i{n_1}\big) \to \dR \hom_\pi \big(\i{n_1}, \O \big)\bigg)
\\& \cong \dR\hom_\pi \big(\i{n_1},\O_{\Z^{[n_1]}}\big).
\end{align*} Note that $$T_{\S{n_1}}=h^0(\Fbv)\cong  \hom_\pi \big(\i{n_1},\O_{\Z^{[n_1]}}\big),\quad h^1(\Fbv)\cong  \ext^1_\pi \big(\i{n_1},\O_{\Z^{[n_1]}}\big). $$
Since $\S{n_1}$ is nonsingular of dimension $2n_1$, we see that the obstruction sheaf $h^1(\Fbv)$ is a vector bundle of rank $n_1$, and hence by  \cite[Proposition 5.6]{BF97} 

$$[\S{n_1}]^{\vir}= [\S{n_1}]\cap  c_{n_1}\left(\ext^1_\pi \big(\i{n_1},\O_{\Z_1}\big)\right).$$

We were notified by Richard Thomas that the obstruction bundle $\ext^1_\pi \big(\i{n_1},\O_{\Z_1}\big)$ can be identified with the dual of the tautological bundle $\omega_S^{[n]}:=\pi_*\left(\omega_S|_{\O_{\Z_1}}\right)$. This can be seen by applying $\hom\big(\O_{\Z_1},-\big)$ to the short exact sequence $$0\to \i{n_1}\otimes  \omega_S\to \omega_S\to \omega_S|_{\O_{\Z_1}}\to 0 $$ over $S\times \S{n_1}$ to get the isomorphism $$\omega_S|_{\O_{\Z_1}}\cong \hom \big(\O_{\Z_1},\omega_S|_{\O_{\Z_1}}\big)\cong \ext^1\big(\O_{\Z_1}, \i{n_1}\otimes \omega_S\big).$$ Now pushing forward, we prove the claim$$\omega_S^{[n]}\cong \pi_* \ext^1\big(\O_{\Z_1}, \i{n_1}\otimes \omega_S\big)\cong  \ext^1_\pi\big(\O_{\Z_1}, \i{n_1}\otimes \omega_S\big)\cong  \ext^1_\pi\big(\i{n_1},\O_{\Z_1}\big)^*,$$ where the second isomorphism is because of local to global spectral sequence (as $\Z_1$ is fiberwise 0-dimensional) and the third one is by Grothendieck-Verdier duality. This completes the proof of part 3.

 If $n_1=n_2=0$, and $\beta\neq 0$, the perfect obstruction theory $\Fbv$ on $S_\beta=\S{0,0}_\beta$ specializes to 
\begin{align*}\Fbv&\cong \cone \bigg( \left[\dR\hom_\pi \left(\O,\O\right)\oplus \dR\hom_\pi\left(\O,\O\right)\right]_0 \to \dR \hom_\pi \left(\O, \O(\Z_\beta) \right)\bigg)
\\&\cong \dR\pi_*\O_{\Z_\beta}(\Z_\beta).
\end{align*}
studied by D\"{u}rr-Kabanov-Okonek \cite{DKO07} in the course of algebraic Seiberg-Witten invariants (Poincar\'e invariants). Moreover, one can see by inspection that under condition \eqref{scond},  the $K$-group class of $(\Fb_{\red})^\vee$ coincides with the $K$-group class of the reduced virtual tangent bundle over $S_\beta$ constructed in \cite{DKO07}. This is because 
by Definition \ref{defired} in the $K$-group  $(\Fb_{\red})^{\vee}=\Fbv+\O_{S_\beta}^{p_g}$ and the same is true for the reduced virtual tangent constructed in \cite{DKO07}. This proves part 4.


Finally, if $n_1=0$, $n_2\neq 0$ and $\beta\neq 0$, then by \cite[Prop B.8]{PT10}, $$\S{0,n_2}_\beta= \text{Hilb}^{n_2}(\Z_\beta/S_\beta) \cong P_{n_2-\beta\cdot (\beta+K_S)/2}(S,\beta),$$
where $ \text{Hilb}^{n_2}(\Z_\beta/S_\beta) $ is the relative Hilbert scheme of points on the universal curve $\Z_\beta$, and $P_{-}(S,-)$ is the moduli space of stable pairs on $S$.
Let $\O_{S\times P}\to \FF$ be the universal stable pair over $S\times P_{n_2-\beta\cdot (\beta+K_S)/2}(S,\beta)$, and let $\IIb$  be the associated complex. In this case $\Fbv$ is given by
\begin{align*}\Fbv&\cong \cone \bigg( \left[\dR\hom_\pi \left(\O,\O\right)\oplus \dR\hom_\pi\big(\i{n_2},\i{n_2}\big)\right]_0 \to \dR \hom_\pi \big(\O, \i{n_2}_\beta \big)\bigg)
\\&\cong \cone \bigg( \dR\hom_\pi \big(\i{n_2},\i{n_2}\big) \to \dR \hom_\pi \big(\O(-\Z_\beta), \i{n_2}\big)\bigg)
\\&\cong \dR\hom_\pi \big(\I_{\Z^{[n_2]}\subset \Z_\beta},\i{n_2}\big)[1]
\cong \dR\hom_\pi \big(\IIb,\FF\big).
\end{align*} 
Here by $\I_{\Z^{[n_2]}\subset \Z_\beta}$ we mean the push-forward of the ideal sheaf of $\Z^{[n_2]}$ as a subscheme of  $\Z_\beta \subset S\times \S{0,n_2}_\beta$. The last isomorphism above follows from (91) in \cite{KT14}. We have shown that in this case $\Fbv$ coincides with virtual tangent bundle of the stable pair moduli space $P_{n_2-\beta\cdot (\beta+K_S)/2}(S,\beta)$. Moreover, by the same reasoning given for the proof of part 4, one can see by inspection that under condition \eqref{scond},  the $K$-group class of $(\Fb_{\red})^\vee$ coincides with the $K$-group class of the reduced virtual tangent bundle over  $P_{n_2-\beta\cdot (\beta+K_S)/2}(S,\beta)$ constructed in \cite{KT14}. This finishes the proof of part 5.

\end{proof}

%
%

\section{Punctual nested Hilbert schemes} \label{sec:nestpoints}
We will discuss a few tools for evaluating the virtual fundamental class $[\S{n_1\ge n_2}]^{\vir}$ constructed in Corollary \ref{virclass}.  We first develop a localization formula \eqref{virtan} in the case that $S$ is toric along the lines of \cite{MNOP06}. When $S$ is toric we express $\iota_*[\S{n_1\ge n_2}]^{\vir}$ as the top Chern class of a vector bundle over the product of Hilbert schemes $\S{n_1}\times \S{n_2}$ (see Proposition \ref{nestprodfano}). We have not been able to prove such a formula for general projective surfaces. Instead, we prove a weaker statement for general projective surfaces in which the integral of certain cohomology classes against $[\S{n_1\ge n_2}]^{\vir}$ is expressed in terms of integrals over $\S{n_1}\times \S{n_2}$. This is done by using degeneration and the double point relations (see Corollary \ref{cor:znzp}, Proposition \ref{genznzp}). Such integrals arise in all the applications that we have in mind, particularly, they are related to the localized DT invariants of $S$ discussed in \cite{GSY17b}. In Section \ref{sec:co} we express some of these integrals against $[\S{n_1\ge n_2}]^{\vir}$ in terms of Carlsson-Okounkov's vertex operators and as a result obtain explicit product formulas for their generating series. 

Recall that 
$$\S{n_1\ge n_2}=\left\{(Z_1,Z_2) \mid Z_i \in \S{n_i},  \;\;  Z_1 \supseteq Z_2 \right\}\subset \S{n_1} \times \S{n_2}.$$ For simplicity in this section, we denote by $I_i$ the ideal sheaf $I_{Z_i}$ of $Z_i$. Hence for any closed point $(Z_1,Z_2) \in \S{n_1\ge n_2}$ we have $I_1 \subseteq I_2$. Sometimes, we denote the closed point above by the pair $(I_1,I_2)$, or by $I_1\subseteq I_2$, when we want to emphasize the inclusion of subschemes. As before, we have the universal objects over $S\times \S{n_1\ge n_2}$:
$$\Phi\colon \i{n_1}\hookrightarrow \i{n_{2}}.$$ 

We will use the following simple lemma in Section \ref{sec:toric}:
\begin{lem} \label{I1I2} \begin{enumerate}[1.]
\item If $(I_1\subseteq I_2) \in \S{n_1,n_2}$ is a closed point, then $$\Hom_S(I_1,I_2)=\Hom_S(I_1,I_1)=\Hom_S(I_2,I_2)=H^0(\O_S)\cong \C.$$ \item If $(I_1,I_2)\in \S{n_1}\times \S{n_2} \setminus \S{n_1,n_2}$ is a closed point then $\Hom_S(I_1,I_2)=0$.
\item If $p_g(S)=0$ and if $(I_1,I_2)\in \S{n_1}\times \S{n_2}$ is a closed point then $\Ext^2_S(I_i,I_j)=0$.
\end{enumerate}
\end{lem}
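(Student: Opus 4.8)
The plan is to reduce all three statements to a single computation: for the ideal sheaf $I_Z \subset \O_S$ of a $0$-dimensional subscheme $Z \subset S$ and for any line bundle $L$ on $S$, the natural restriction map gives an isomorphism $\Hom_S(I_Z, L) \cong H^0(S, L)$. To establish this, I would apply $\Hom_S(-,L)$ to the structure sequence $0 \to I_Z \to \O_S \to \O_Z \to 0$, yielding the exact sequence
$$0 \to \Hom_S(\O_Z, L) \to \Hom_S(\O_S, L) \to \Hom_S(I_Z, L) \to \Ext^1_S(\O_Z, L) \to \cdots.$$
Since $Z$ has codimension $2$ in the smooth surface $S$, the sheaf $\O_Z$ has finite length over each local ring and hence projective dimension $2$ (Auslander--Buchsbaum), so $\ext^i_S(\O_Z, \O_S) = 0$ for $i \neq 2$; tensoring with the locally free $L$ gives $\ext^i_S(\O_Z, L) = 0$ for $i = 0, 1$. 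As $\O_Z$ is moreover supported in dimension $0$, the local-to-global spectral sequence forces $\Hom_S(\O_Z, L) = \Ext^1_S(\O_Z, L) = 0$, which proves the claim. Since $S$ is connected and projective, $H^0(\O_S) = \C$, and $H^0(\omega_S) = 0$ exactly when $p_g(S) = 0$.

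For part (1), let $a_i : I_i \hookrightarrow \O_S$ denote the tautological inclusions. Composing with $a_2$ and using left-exactness of $\Hom_S(I_1,-)$ gives an injection $\Hom_S(I_1, I_2) \hookrightarrow \Hom_S(I_1, \O_S) \cong \C$, and similarly $\Hom_S(I_i, I_i) \hookrightarrow \Hom_S(I_i, \O_S) \cong \C$; each target is one-dimensional by the fact above, and each source is nonzero (it contains the identity, respectively the inclusion $I_1 \hookrightarrow I_2$, which exists precisely because $Z_2 \subseteq Z_1$). Hence all three groups equal $\C$. For part (2), the same injection $\Hom_S(I_1, I_2) \hookrightarrow \Hom_S(I_1, \O_S) = \C\cdot a_1$ shows every $\phi : I_1 \to I_2$ satisfies $a_2 \circ \phi = c\, a_1$ for some scalar $c$. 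If $c \neq 0$ then the image of $a_2 \circ \phi$ equals $c\cdot I_1 = I_1$ as a subsheaf of $\O_S$, and this is contained in $\operatorname{im}(a_2) = I_2$; thus $I_1 \subseteq I_2$, i.e. $(I_1,I_2) \in \S{n_1,n_2}$, contrary to hypothesis. So $c = 0$, whence $\phi = 0$ since $a_2$ is a monomorphism, and $\Hom_S(I_1, I_2) = 0$. For part (3), Serre duality on the smooth projective surface $S$ gives $\Ext^2_S(I_i, I_j) \cong \Hom_S(I_j, I_i \otimes \omega_S)^{\vee}$; composing with $I_i \otimes \omega_S \hookrightarrow \omega_S$ produces an injection $\Hom_S(I_j, I_i \otimes \omega_S) \hookrightarrow \Hom_S(I_j, \omega_S) \cong H^0(\omega_S)$, which vanishes when $p_g(S) = 0$, so $\Ext^2_S(I_i, I_j) = 0$.

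The only nontrivial input is the local $\ext$-vanishing for the structure sheaf of a $0$-dimensional subscheme of a smooth surface; once that is in hand, all three statements are formal consequences of left-exactness of $\Hom$ and Serre duality. Accordingly, I expect the only point requiring genuine care to be making the codimension/projective-dimension step (the vanishing $\ext^i_S(\O_Z, \O_S) = 0$ for $i \le 1$, hence $\Hom_S(\O_Z, L) = \Ext^1_S(\O_Z, L) = 0$) fully rigorous; the remainder is bookkeeping.
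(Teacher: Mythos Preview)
Your proof is correct and follows essentially the same approach as the paper's: both reduce everything to the injection $\Hom_S(I_1,I_2)\hookrightarrow \Hom_S(I_1,\O_S)\cong H^0(\O_S)$ (and its $\omega_S$-twisted analogue for part 3 via Serre duality). The only difference is one of emphasis: you explicitly justify $\Hom_S(I_Z,L)\cong H^0(L)$ via the contravariant $\Hom(-,L)$ sequence and $\ext$-vanishing for codimension-$2$ subschemes, whereas the paper applies the covariant $\Hom(I_1,-)$ to $0\to I_2\to\O_S\to\O_{Z_2}\to 0$ and simply asserts $\Hom_S(I_1,\O_S)\cong H^0(\O_S)$; your part 2 argument (tracking where $\phi$ lands under $a_2$) is a rephrasing of the paper's observation that the connecting map $u:\Hom(I_1,\O_S)\to\Hom(I_1,\O_{Z_2})$ is nonzero precisely when $I_1\not\subset I_2$.
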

\begin{proof}
Applying the functor $\Hom(I_1,-)$ to the short exact sequence $0\to I_2\to \O_S\to \O_{Z_2}\to 0$, we get the exact sequence $$0\to \Hom(I_1,I_2)\subseteq \Hom(I_1,\O_S)\cong H^0(\O_S)=\C\xrightarrow{u} \Hom(I_1,\O_{Z_2}),$$ where $u$ composes any map $I_1\to \O_S$ with the natural map $\O_S\to \O_{Z_2}$.  In part 1 the inclusion $I_1\subseteq I_2$ gives a nonzero element of $\Hom(I_1,I_2)$ and hence the claim follows. In part 2, $u(I_1\subset \O_S)\neq 0$ because $I_1\not \subset I_2$, and so the claim is proven. For part 3, applying the functor $\Hom(I_j,-)$ to the short exact sequence $0\to I_i\otimes \omega_S\to \omega_S\to \O_{Z_i}\to 0$, we get $$ \Hom(I_j,I_i\otimes \omega_S)\subseteq \Hom_S(I_j,\omega_S)=H^0(\omega_S)=0,$$ and so the claim follows by Serre duality.
\end{proof}
As will become clear shortly, the following $K$-group element plays an important role in the rest of the paper:
\begin{defi} \label{virbdl} For any line bundles $M$ on $S$, let $\sE_M^{n_1,n_2} \in K(\S{n_1}\times \S{n_2})$  be the element of rank $n_1+n_2$ defined by $$\sE_M^{n_1,n_2}:=\left[\dR\pi_*p^*M \right]-\left[\dR\hom_{\pi}(\i{n_1},\i{n_2}\otimes p^*M)\right],$$ where $p$ and  $\pi$ are respectively the projections from $S\times \S{n_1}\times \S{n_2}$ to the first and the product of the last two factors. 
Similarly, we define the twisted tangent bundle as the rank $2n_i$ element of $K(\S{n_1}\times \S{n_2})$ 
$$\sT^M_{\S{n_i}}:=\left[\dR\pi_*p^*M \right]-\left[\dR\hom_{\pi}(\i{n_i},\i{n_i}\otimes p^*M)\right].$$ Note that $\sT^{\O_S}_{\S{n_i}}$ is the class of (pullback of)   the usual tangent bundle of $\S{n_i}$. If $M=\O_S$, we sometimes drop it from the notation. 
\end{defi}

\subsection{Toric surfaces} \label{sec:toric} 
Let $(\C^2)^{[n_1\ge n_2]}$ be the nested Hilbert scheme of points on $\C^2=\text{Spec}( R)$, where $R=\C[x_1,x_2]$. The $2$-dimensional torus $\dT$ acts on $\C^2$. We denote by $t_{1},t_{2}$ the torus characters, such that the tangent space at $0\in \C^2$ has the $\dT$-character $t_1^{-1}+ t_2^{-1}$. The $\dT$-fixed set $$\ct{n_1\ge n_2}\subset \ct{n_1}\times \ct{n_2}$$ is isolated, and is given by the inclusion of the monomial ideals $I_1\subseteq I_2$ or equivalently the  corresponding nested partitions  $\mu' \subseteq \mu$.  By Proposition \ref{abs} and Lemma \ref{nofixed}, the virtual tangent space at the $\dT$-fixed point $I_1\subseteq I_2$ is given by\footnote{This is obtained by taking the derived restriction of the complex $\Fb$ to the point $I_1\subseteq I_2$, and then taking the $K$-group class of the resulting complex. Also note that by slightly modifying the proof of part 1 of Lemma \ref{I1I2},  $\Hom(I_1,I_1)=\Hom(I_1,I_2)=\Hom(I_2,I_2)=R$.}
\begin{equation} \label{Tvirchar} \cT^{\vir}_{I_1\subseteq I_2}=-\chi(I_1,I_1)-\chi(I_2,I_2)+\chi(I_1,I_2)+\chi(R,R),\end{equation} where $\chi(-,-)=\sum_{i=0}^2(-1)^i\Ext_R^i(-,-)$.
By the exact method as in \cite[Section 4.6]{MNOP06} using Taylor resolutions and \v{C}ech complexes, the $\dT$-representation of $\cT^{\vir}_{I_1\subseteq I_2}$ can be explicitly written down as a Laurent polynomial in $t_1$ and $t_2$. For the $\dT$-fixed 0-dimensional subschemes  $Z_2\subseteq Z_1 \subset \C^2$ corresponding to the monomial ideals $I_1\subseteq I_2$ define $$\cZ_1:=\sum_{(k_1,k_2)\in \mu}t_1^{k_1}t_2^{k_2}=\frac{1-P_1(t_1,t_2)}{(1-t_1)(1-t_2)},\quad \quad \cZ_2:=\sum_{(k_1,k_2)\in \mu'}t_1^{k_1}t_2^{k_2}=\frac{1-P_2(t_1,t_2)}{(1-t_1)(1-t_2)}.$$ Here, $P_1$, $P_2$ are the Poincar\'e polynomials associated to the monomial ideals $I_1$ and $I_2$ (defined using their Taylor resolutions, see \cite[Section 4.7]{MNOP06}), respectively. Also, define $$\overline{P_i}:=P_i(t_1^{-1},t_2^{-1}),\quad \quad \overline{\cZ}_i:=\cZ_i(t_1^{-1},t_2^{-1}), \quad \quad i=1, 2.$$ Putting these expressions into \eqref{Tvirchar} and simplifying, we get
\begin{align} \label{virtan}\tr_{\cT^{\vir}_{I_1\subseteq I_2}}&=\frac{-\overline{ P}_1P_1-\overline{ P}_2P_2+\overline{ P}_1P_2+1}{(1-t_1)(1-t_2)}\\ \notag &=\cZ_1+\frac{\overline{\cZ}_2}{t_1t_2}+\left(\overline{\cZ}_1\cdot\cZ_2-\overline{\cZ}_1\cdot \cZ_1-\overline{\cZ}_2\cdot \cZ_2\right)\frac{(1-t_{1})(1-t_{2})}{t_{1}t_{2}}.\end{align}

Now if $S$ is a toric surface, then the set of $\dT$-fixed points of $\S{n_1\ge n_2}\subset \S{n_1}\times \S{n_2}$ is again isolated (Lemma \ref{nofixed}), and the $\dT$-character of the virtual tangent space at any fixed point is obtained by summing over the expression \eqref{virtan} for all the $\dT$-invariant open subsets of $S$. This finishes the proof of Theorem \ref{thm1.7}.

\begin{lem} \label{nofixed} Suppose that $S$ is a nonsingular projective toric surface, and  $Z_2\subseteq Z_1$ is a $\dT$-fixed point of $\S{n_1\ge n_2}$, then $\Ext^2_S(I_1,I_1)=\Ext^2_S(I_2,I_2)=\Ext^2_S(I_1,I_2)=0$, the $\dT$-representations  $$\Ext^1_S(I_1,I_1), \quad \Ext^1_S(I_2,I_2), \quad \Ext^1_S(I_1,I_2)$$  contain no trivial sub-representations.

\end{lem}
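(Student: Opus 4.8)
The plan is to treat the two assertions independently and to deduce the statement about trivial subrepresentations from a purely local computation on the toric charts of $S$. The vanishing of the $\Ext^2$ groups is immediate: a nonsingular projective toric surface is rational, so $p_g(S)=0$, and hence $\Ext^2_S(I_1,I_1)=\Ext^2_S(I_2,I_2)=\Ext^2_S(I_1,I_2)=0$ is a special case of part~3 of Lemma~\ref{I1I2}. (Directly: by Serre duality $\Ext^2_S(I_i,I_j)\cong\Hom_S(I_j,I_i\otimes\omega_S)^\vee$, and since $I_i\otimes\omega_S\subseteq\omega_S$ while $I_j$ differs from $\O_S$ only on a finite set, $\Hom_S(I_j,I_i\otimes\omega_S)\hookrightarrow\Hom_S(I_j,\omega_S)=H^0(\omega_S)=0$.) So the substance of the lemma is to show that $\Ext^1_S(I_1,I_1)$, $\Ext^1_S(I_2,I_2)$ and $\Ext^1_S(I_1,I_2)$ have no $\dT$-trivial subrepresentation.

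First I would localize. Because $I_1\subseteq I_2$ and all three of these sheaves agree with $\O_S$ away from the finite set $Z_1\supseteq Z_2$, one has $\hom_S(I_i,I_j)\cong\O_S$ for the pairs in question (for $i<j$ the inclusion $I_1\hookrightarrow I_2$ supplies the nowhere-vanishing section), while $\ext^q_S(I_i,I_j)=0$ for $q\ge 2$ since $I_i$ has projective dimension $\le 1$ at every point of the smooth surface $S$. As $H^1(\O_S)=H^2(\O_S)=0$ for a toric surface, the local-to-global spectral sequence degenerates and yields a $\dT$-equivariant isomorphism
$$\Ext^1_S(I_i,I_j)\cong H^0\bigl(S,\ext^1_S(I_i,I_j)\bigr)=\bigoplus_{P}\ext^1_S(I_i,I_j)_P,$$
the sum ranging over the torus-fixed points $P$ lying on $Z_i$. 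On the $\dT$-invariant chart $U_P\cong\C^2$ (standard action, coordinate ring $R=\C[x_1,x_2]$) the sheaves $I_1\subseteq I_2$ restrict to monomial ideals $J_1\subseteq J_2$ of finite colength (resp. $J_i=J_j$ when $i=j$), and the same spectral-sequence argument over the affine $U_P$ identifies the stalk $\ext^1_S(I_i,I_j)_P$ with $\Ext^1_R(J_i,J_j)$. Since the two tangent characters at a smooth fixed point of a smooth toric surface form a $\ZZ$-basis of the character lattice of $\dT$, ``having no $\dT$-trivial subrepresentation'' is equivalent to ``the coefficient of $t_1^0t_2^0$ vanishes'', so it suffices to prove the latter for each $\Ext^1_R(J_i,J_j)$.

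Then I would carry out the MNOP-style character computation on $\C^2$. Over $R$ every finite-colength ideal has projective dimension $\le 1$, so $\Ext^2_R(J_i,-)=0$; moreover $\Hom_R(J_i,R)=R$ (the reflexive hull of $J_i$ is trivial), whence $\Hom_R(J_i,J_j)=(J_j:J_i)=R$ because $J_i\subseteq J_j$. Therefore in the character ring
$$\tr_{\Ext^1_R(J_i,J_j)}=\tr_R-\chi_R(J_i,J_j),\qquad \chi_R(J_i,J_j)=\overline{\tr_{J_i}}\;\tr_{J_j}\;(1-t_1^{-1})(1-t_2^{-1}),$$
with $\tr_{J_k}=\tfrac1{(1-t_1)(1-t_2)}-\cZ_k$ and $\cZ_k$ the character of $R/J_k$. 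Expanding,
$$\tr_{\Ext^1_R(J_i,J_j)}=\cZ_j+\frac{\overline{\cZ_i}}{t_1t_2}-\overline{\cZ_i}\;\cZ_j\;(1-t_1^{-1})(1-t_2^{-1}).$$
The middle term has no $t_1^0t_2^0$-part, so reading off this coefficient from the remaining two terms reduces the claim to the combinatorial identity
\begin{align*}
[\cZ_j]_0 \;=\; \#(\mu_i\cap\mu_j) &- \#\{v\in\mu_i:\,v+e_1\in\mu_j\} - \#\{v\in\mu_i:\,v+e_2\in\mu_j\}\\
&+ \#\{v\in\mu_i:\,v+e_1+e_2\in\mu_j\},
\end{align*}
where $\mu_k\subset\ZZ_{\ge0}^2$ is the Young diagram of $R/J_k$, $e_1,e_2$ the standard vectors, and $[\cZ_j]_0\in\{0,1\}$ records whether the origin lies in $\mu_j$. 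Since $\mu_j\subseteq\mu_i$ and $\mu_i$ is down-closed, each of the four counts depends only on $\mu_j$ and equals, respectively, $|\mu_j|$, $|\mu_j|-p$, $|\mu_j|-q$ and $|\mu_j|-p-q+1$, where $p$ and $q$ are the numbers of boxes of $\mu_j$ on the two coordinate axes; their alternating sum is $1$ if $\mu_j\ne\emptyset$ and $0$ if $\mu_j=\emptyset$, which is exactly $[\cZ_j]_0$. This disposes of all three cases.

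The real work here is only bookkeeping. One must verify $\Hom_R(J_i,J_j)=R$ and $\Ext^2_R(J_i,J_j)=0$ on the nose, so that the class of $\Ext^1_R(J_i,J_j)$ in the character ring is genuinely pinned down rather than only its virtual Euler characteristic; and one must make the passage from $\Ext^\bullet_S$ to the affine $\Ext^\bullet_R$ on each chart honestly $\dT$-equivariant. It is precisely this local-to-global reduction that makes the argument clean: each stalk $\ext^1_S(I_i,I_j)_P$ is a genuine finite-dimensional $\dT$-module whose $t_1^0t_2^0$-coefficient is well defined, whereas attempting to read the trivial part directly off a localization expression for $\tr_{\cT^{\vir}}$ would run into the usual ambiguity of expanding a sum of rational functions. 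With the reduction in hand the closing Young-diagram identity is entirely elementary.
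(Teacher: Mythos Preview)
Your argument is correct. Both the paper and you dispose of the $\Ext^2$ vanishings the same way (rationality of $S$ plus Lemma~\ref{I1I2}\,(3)), but the treatment of the $\Ext^1$ groups is genuinely different. The paper separates the diagonal and off-diagonal cases: for $\Ext^1_S(I_i,I_i)\cong\Hom_S(I_i,\O_{Z_i})$ it decomposes over the toric charts and invokes \cite[Lemma 3.2]{ES87} for the absence of trivial weights; for $\Ext^1_S(I_1,I_2)$ it uses the long exact sequence of $\Hom_S(I_1,-)$ applied to $0\to I_2\to\O_S\to\O_{Z_2}\to 0$ to sandwich it between $\Hom_S(I_1,\O_{Z_2})\subset\Hom_S(I_2,\O_{Z_2})$ (handled by the diagonal case) and $\Ext^1_S(I_1,\O_S)\cong H^0(\O_{Z_1}\otimes\omega_S)^\ast$, whose characters are visibly nontrivial because the fiber of $\omega_S$ at each fixed point carries weight $t_1t_2$. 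You instead give a uniform argument: the local-to-global spectral sequence (using $\hom_S(I_i,I_j)\cong\O_S$, $\ext^{\ge 2}=0$, and $H^{>0}(\O_S)=0$) reduces everything to the stalks $\Ext^1_R(J_i,J_j)$ on each chart, and then the MNOP character identity together with a short inclusion--exclusion on the Young diagram of $\mu_j$ shows the $t_1^0t_2^0$-coefficient vanishes. Your route is self-contained (no appeal to \cite{ES87}) and handles all three pairs $(i,j)$ simultaneously; the paper's route is a touch more conceptual but leans on an external reference and on the Serre-dual description of $\Ext^1_S(I_1,\O_S)$. One minor remark: the count $|\mu_j|-p-q+1$ for the $(e_1+e_2)$-shift is literally valid only when $\mu_j\neq\emptyset$, but you correctly note that the empty case is trivial, so this is harmless.
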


\begin{proof} The vanishings in the lemma follow from the fact that $p_g(S)=0$ for toric surfaces, and part 3 of Lemma \ref{I1I2}.
For any fixed point $\alpha\in S$, let $U_{\alpha}\cong \C^2$ be the $\dT$-invariant open neighborhood of $\alpha$, and let $I_{i,\alpha}:=I_i|_{\alpha}$, and $\O_{i,\alpha}:=\O_{Z_i}|_{\alpha}$. 
By \cite[Lemma 3.2]{ES87}, $\Hom_{U_\alpha}(I_{i,\alpha},\O_{i,\alpha})$ contains no trivial subrepresentations. Therefore, 
$$\Ext^1_S(I_i,I_i) \cong \Hom_S(I_i,\O_{Z_i})=\bigoplus_\alpha  \Hom_{U_\alpha}(I_{i,\alpha},\O_{i,\alpha})$$ contains no trivial representations either (in the first isomorphism we used the vanishing $H^1(\O_S)=0$ for toric surfaces). 

Next, applying  $\Hom_S(I_{1},-)$ to the natural short exact sequence $0\to I_2\to \O_S\to \O_{Z_2}\to 0$, we obtain the exact sequence \begin{equation}\label{homI1}\Hom_S(I_{1}, \O_{Z_2})\to \Ext^{1}_S(I_{1}, I_{2})\to \Ext^{1}_S(I_{1}, \O_S).\end{equation}To finish the proof it suffices to show that the 1st and the 3rd terms in \eqref{homI1} contain no trivial representations. The claim for the 1st term in \eqref{homI1} follows from the fact that for each $\alpha$, $\dT$ acts with different weights on the $\C$-basis elements for $I_{1,\alpha}$ and $\O_{2,\alpha}$ that are given by the monomials (because of the inclusion $I_{1,\alpha} \subseteq I_{2,\alpha}$). The claim for the 3rd term in \eqref{homI1} also follows because, 
applying $\Hom_S(-,\O_S)$  to the natural short exact sequence $0\to I_1\to \O_S\to \O_{Z_1}\to 0$, and using equivariant Serre duality, we get $$\Ext^{1}_S(I_{1}, \O_S)\cong \Ext^2_S(\O_{Z_1},\O_S)\cong H^0(\O_{Z_1}\otimes \omega_S)^*.$$ But since $Z_1$ is zero dimensional and $\dT$-fixed  $$H^0( \O_{Z_1}\otimes \omega_S)=\bigoplus_\alpha H^0(U_\alpha, \O_{Z_1}\otimes \omega_{S}).$$ For each $\alpha$, let $\mu_{\alpha}$ be the partition corresponding to $Z_1|_{U_\alpha}$, and suppose that the $\dT$-character of $T_\alpha S$ is $t_{1}^{-1}+ t_2^{-1}$ for some $\dT$-characters $t_1$ and $t_2$, then, the fiber of $\omega_{S}$ at $\alpha$ has the $\dT$-character $t_1t_2$, and therefore,  $$H^0(U_\alpha, \O_{Z_1}\otimes \omega_{S})=t_1t_2\sum_{(k_1,k_2)\in \mu_{\alpha}} t_1^{k_1}t_2^{k_2}$$ has no trivial representations. 
\end{proof}

\subsection{ Proof of Theorem \ref{thm2}}
Suppose that $S$ is a toric surface, and $(I_1,I_2)\in \S{n_1}\times \S{n_2}$ is a closed point. By Lemma \ref{nofixed}  \begin{equation} \label{fano}\Ext^2_S(I_i,I_j)=0.\end{equation} Therefore by basechange, the sheaves $$\ext^1_\pi(\i{n_1},\i{n_1}), \quad \ext^1_\pi(\i{n_2},\i{n_2}),\quad \ext^1_\pi(\i{n_1},\i{n_2})$$ are vector bundles over $\S{n_1\ge n_2}$ of ranks $2n_1, 2n_2, n_1+n_2$, respectively. Moreover,  the virtual tangent bundle of Proposition \ref{abs}, simplifies to the 2-term complex
\begin{equation} \label{2term}\Fbv=\left\{\ext^1_\pi(\i{n_1},\i{n_1})\oplus \ext^1_\pi(\i{n_2},\i{n_2})\to \ext^1_\pi(\i{n_1},\i{n_2})\right\}.\end{equation} Recall that the rank of $\Fbv$ is equal to $n_1+n_2$, and recall the  $K$-group elements  $\sE^{n_1,n_2}$ and $\sE^{n_1,n_2}|_{(I_1,I_2)}$ of ranks $n_1+n_2$ from Definition \ref{virbdl}. 
We have \begin{equation} \label{fiber} \sE^{n_1,n_2}|_{(I_1,I_2)}=\begin{cases}\Ext^1_S(I_1,I_2) & (I_1,I_2) \in \S{n_1\ge n_2}, \\ H^0(\O_S)\oplus\Ext^1_S(I_1,I_2) & (I_1,I_2) \not \in \S{n_1\ge n_2}.\end{cases}\end{equation}  This is true because of basechange, the vanishing \eqref{fano}, the vanishing $H^1(\O_S)=H^2(\O_S)=0$, and that by Lemma \ref{I1I2}, $$\Hom_S(I_1,I_2)=\begin{cases} H^0(\O_S) & I_1\subseteq I_2, \\ 0 & I_1\not \subseteq I_2.\end{cases}$$ Note that this is consistent with the fact that the dimension of $\Ext^1(I_1,I_2)$ jumps by 1 on $\S{n_1,n_2}\subset \S{n_1}\times \S{n_2}$, and that the dimension of $\sE^{n_1,n_2}|_{(I_1,I_2)}$ is constant over $\S{n_1}\times \S{n_2}$.

Now we are ready to express the main result of this section relating the push forward of $[\S{n_1\ge n_2}]^{\vir}$ to the products of the fundamental classes of Hilbert scheme of points.
The following proposition proves Theorem \ref{thm2}.
\begin{prop}\label{nestprodfano} 
Suppose that $S$ is a nonsingular projective toric surface, then, 
$$\iota_*[\S{n_1\ge n_2}]^{\vir}=c_{n_1+n_2}(\sE^{n_1,n_2}) \cap [\S{n_1}\times \S{n_2}],$$ where $\iota$ is the natural inclusion   $\S{n_1\ge n_2}\hookrightarrow \S{n_1}\times \S{n_2}$.
\end{prop}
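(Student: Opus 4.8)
The plan is to deduce the identity from torus localization on the smooth toric ambient space $M:=\S{n_1}\times\S{n_2}$. Both sides are $\dT$-equivariantly defined: the perfect obstruction theory $\Fb$ of Corollary~\ref{nestpointabs} is $\dT$-equivariant, so it produces a $\dT$-equivariant virtual class $[\S{n_1\ge n_2}]^{\vir}\in A^{\dT}_*(\S{n_1\ge n_2})$ refining the ordinary one, and $c_{n_1+n_2}(\sE^{n_1,n_2})\cap[M]$ has an obvious equivariant refinement. Since $M$ is a nonsingular projective toric variety its $\dT$-fixed points are isolated — the pairs of monomial ideals $(I_1,I_2)$ supported over $S^{\dT}$ — and $A^{\dT}_*(M)$ is torsion-free over $A^{\dT}_*(\mathrm{pt})$, hence embeds, after inverting the equivariant parameters, into $\bigoplus_{p}A^{\dT}_*(p)_{\mathrm{loc}}$ by the localization theorem. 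So it is enough to show that $\iota_*[\S{n_1\ge n_2}]^{\vir}$ and $c_{n_1+n_2}(\sE^{n_1,n_2})\cap[M]$ have equal restrictions to each $\dT$-fixed point $p=(I_1,I_2)\in M$, where I write $i_p\colon\{p\}\hookrightarrow M$ for the inclusion; the non-equivariant statement then follows by the forgetful map $A^{\dT}_*(M)\to A_*(M)$.

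First I would dispose of the fixed points $p$ not on $\S{n_1\ge n_2}$. There $i_p^*\iota_*[\S{n_1\ge n_2}]^{\vir}=0$, because $\iota_*[\S{n_1\ge n_2}]^{\vir}$ is supported on $\S{n_1\ge n_2}$ and $\{p\}\times_M\S{n_1\ge n_2}=\emptyset$, so the refined Gysin pullback $i_p^{!}$ kills it. On the other hand $i_p^*\big(c_{n_1+n_2}(\sE^{n_1,n_2})\cap[M]\big)=c_{n_1+n_2}(\sE^{n_1,n_2}|_p)\cap[p]$, and by \eqref{fiber} the fiber $\sE^{n_1,n_2}|_p=H^0(\O_S)\oplus\Ext^1_S(I_1,I_2)$ is an honest $\dT$-representation of rank $n_1+n_2$ containing the trivial summand $H^0(\O_S)$; hence $c_{n_1+n_2}(\sE^{n_1,n_2}|_p)$, which equals its equivariant Euler class, vanishes. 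So both sides restrict to $0$ there, and only the nested monomial ideals $p=(I_1,I_2)$ with $I_1\subseteq I_2$ matter.

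For such a $p$, on the right $\sE^{n_1,n_2}|_p=\Ext^1_S(I_1,I_2)$ by \eqref{fiber}, an honest rank-$(n_1+n_2)$ $\dT$-representation, so $i_p^*(\mathrm{RHS})=e(\Ext^1_S(I_1,I_2))$, the product of its $\dT$-weights. On the left I would apply the Graber--Pandharipande virtual localization formula on $\S{n_1\ge n_2}$. By Theorem~\ref{thm1.7} its $\dT$-fixed locus is the finite reduced set of nested monomial ideals, and by Lemma~\ref{nofixed} the spaces $\Ext^1_S(I_1,I_1),\Ext^1_S(I_2,I_2),\Ext^1_S(I_1,I_2)$ carry no trivial subrepresentation and the relevant $\Ext^2$ vanish; hence $\Fbv$ restricted to any fixed point has trivial $\dT$-fixed part, the fixed obstruction theory is the zero complex, and
\[
\iota_*[\S{n_1\ge n_2}]^{\vir}=\sum_{q}\,(i_q)_*\,\frac{1}{e(\cT^{\vir}_q)}\in A^{\dT}_*(M)_{\mathrm{loc}},
\]
the sum over nested monomial ideals $q$, with $\cT^{\vir}_q=\Fbv|_q$ entirely moving. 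Restricting to $p$, all terms with $q\neq p$ die and the self-intersection formula for $\{p\}\hookrightarrow M$ gives $i_p^*(\mathrm{LHS})=e(T_pM)/e(\cT^{\vir}_p)$. Now $T_pM=\Ext^1_S(I_1,I_1)\oplus\Ext^1_S(I_2,I_2)$, while from Corollary~\ref{nestpointabs} together with $\Hom_S(I_i,I_i)=\Hom_S(I_1,I_2)=H^0(\O_S)=\C$, $H^1(\O_S)=0$, and the $\Ext^2$-vanishings one computes in the equivariant $K$-group
\[
[\cT^{\vir}_p]=\chi(\O_S)-\chi(I_1,I_1)-\chi(I_2,I_2)+\chi(I_1,I_2)=\Ext^1_S(I_1,I_1)+\Ext^1_S(I_2,I_2)-\Ext^1_S(I_1,I_2)
\]
(this is \eqref{Tvirchar} summed over the toric charts of $S$). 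Hence $e(\cT^{\vir}_p)=e(\Ext^1_S(I_1,I_1))\,e(\Ext^1_S(I_2,I_2))/e(\Ext^1_S(I_1,I_2))$, so $i_p^*(\mathrm{LHS})=e(\Ext^1_S(I_1,I_2))=i_p^*(\mathrm{RHS})$, which completes the comparison at every fixed point.

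The step I expect to be most delicate is the equivariant bookkeeping: one must invoke Lemma~\ref{nofixed} to be certain that no trivial $\dT$-summand is concealed in $T_pM$ or in $\cT^{\vir}_p$, so that all the Euler classes above are nonzero, the quotients make sense, and the non-nested fixed points genuinely contribute zero — and one must check that $\cT^{\vir}_p$ has no fixed part so that virtual localization takes the simple form used. A structural remark explains why the hypothesis forces $S$ to be toric: one would like to present $\S{n_1\ge n_2}\subset M$ as the zero scheme of a section of an honest rank-$(n_1+n_2)$ bundle with $K$-class $\sE^{n_1,n_2}$ and read off $[\S{n_1\ge n_2}]^{\vir}$ as its refined top Chern class, but $\ext^1_\pi(\i{n_1},\i{n_2})$, while locally free on $\S{n_1\ge n_2}$, is not the restriction of a bundle on $M$ since the rank of $\ext^1_{\pi'}(\i{n_1},\i{n_2})$ jumps along the nested locus (cf.\ \eqref{fiber}); torus localization is exactly what lets us bypass this obstruction.
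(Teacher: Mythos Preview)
Your argument is correct and follows essentially the same approach as the paper: both apply virtual localization to $[\S{n_1\ge n_2}]^{\vir}$ and Atiyah--Bott localization to $c_{n_1+n_2}(\sE^{n_1,n_2})\cap[\S{n_1}\times\S{n_2}]$, observe that the non-nested fixed points contribute zero on the right because of the trivial summand $H^0(\O_S)$, and match the remaining contributions via Lemma~\ref{nofixed} and \eqref{fiber}. Your phrasing in terms of restricting both equivariant classes to each fixed point is a slightly more careful packaging of the same comparison the paper carries out by writing both localization sums side by side.
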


\begin{proof} 
Let $i$ and $j$ be inclusion of the fixed point set in $\S{n_1\ge n_2}$ and $\S{n_1}\times \S{n_2}$, respectively. By \eqref{2term} and Lemma \ref{nofixed}, the virtual localization formula (see \cite{GP99}) gives
\begin{align*}[\S{n_1\ge n_2}]^{\vir}=&\sum_{(I_1 \subseteq I_2)\in \ST{n_1\ge n_2}}\frac{i_*[(I_1\subseteq I_2)]}{e(\cT^{\vir}_{I_1\subseteq I_2})}\\=&\sum_{(I_1\subseteq I_2) \in \ST{n_1\ge n_2}}\frac{e(\Ext^1_S(I_1,I_2))}{e(\Ext^1_S(I_1,I_1))e(\Ext^1_S(I_2,I_2))}i_*[(I_1\subseteq I_2)],\end{align*}
 where the sum is over the isolated  $\dT$-fixed points, and $e(-)$ indicates the equivariant Euler class. By Lemma \ref{nofixed}, the coefficient of $i_*[(I_1\subseteq I_2)]$ in the last sum is the product of the pure nontrivial  torus weights.
On the other hand, by Lemma \ref{nofixed} and the Atiyah-Bott localization formula 
 \begin{align*}&c_{n_1+n_2}(\sE^{n_1,n_2}) \cap [\S{n_1}\times \S{n_2}]=\sum_{(I_1, I_2)\in \ST{n_1}\times \ST{n_2}}\frac{e(\sE^{n_1,n_2}|_{(I_1,I_2)})}{e(T_{(I_1, I_2)}(\S{n_1}\times \S{n_2}))}j_*[(I_1, I_2)]\\&=\sum_{(I_1, I_2) \in \ST{n_1}\times \ST{n_2}}\frac{e(\sE^{n_1,n_2}|_{(I_1,I_2)})}{e(\Ext^1_S(I_1,I_1))e(\Ext^1_S(I_2,I_2))}j_*[(I_1, I_2)]\\
 &=\sum_{I_1\subseteq I_2 \in \ST{n_1\ge n_2}}\frac{e(\Ext^1_S(I_1,I_2))}{e(\Ext^1_S(I_1,I_1))e(\Ext^1_S(I_2,I_2))}\iota_*\circ i_*[I_1\subseteq I_2], \end{align*}
 where the last equality is because  of \eqref{fiber}, and the fact that since $H^0(\O_S)\cong \C$ is the trivial $\dT$-representation, we have $e(H^0(\O_S))=0$. The proposition is proven by comparing the outcomes of both localization formulas above, and taking the non-equivariant limit at the end.
 \end{proof}

\subsection{Relative nested Hilbert schemes} \label{sec:relative}

In this section we sketch how the degeneration formula of Li and Wu can be applied to the case of nested Hilbert scheme of points. Let $(S,D)$ be a pair of nonsingular projective surface and a nonsingular effective divisor. Li and Wu \cite{LW15} introduced the notion of a \emph{stable relative} ideal sheaf.
$I\in \S{n}$ is said to be relative to $D$ if the natural map \begin{equation}\label{normality} I\otimes \O_D\to \O_S\otimes \O_D\end{equation} is injective (see also \cite{MNOPII}). This is equivalent to $\O_S/I$ having support disjoint from $D$. Relativity is an open condition in $\S{n}$. Li and Wu constructed a relative Hilbert scheme, denoted by $\SD{n}$, by considering the equivalence classes of the stable relative ideal sheaves on the $k$-step semistable models $S[k]$ for $0\le k\le n$. Let $D_0,\dots,D_{k-1}$ be the singular locus of $S[k]$ and $D_k\subset S[k]$ be the proper transform of $D$. $S[k]$ consists of $k+1$ irreducible components $\Delta_0,\dots, \Delta_k$ with $\Delta_0=S$ and $D_i=\Delta_i\cap \Delta_{i+1}$ for $i=0,\dots, k-1$. A relative ideal sheaf $I$ on $S[k]$ satisfies \eqref{normality} for $D=D_0,\dots, D_k$. Two relative ideal sheaves $I$ and $I'$ on $S[k]$ are equivalent if the quotients $\O_{S[k]}/I$ and $\O_{S[k]}/I'$ differ by an automorphism of $S[k]$ covering the identity on $\Delta_0=S$. The stability of a relative ideal sheaf means that it has finitely many automorphisms as described above.  $\SD{n}$ is a smooth proper Deligne-Mumford stack of dimension $2n$. 

 Since by the relativity condition for any relative ideal sheaf $I$, $I|_{D_j}\cong \O_{D_j}$, the generalization of Li-Wu Hilbert schemes to the set up  of the nested Hilbert schemes is straightforward. In other words, we can construct a proper Deligne-Mumford stack $\SD{n_1\ge n_2}$ as the moduli space of relative ideal sheaves $I_1$ and $I_2$ with $I_1$ stable and $I_1\subseteq I_2$\footnote{Note that if $Z_i\subset S[k]$ is the 0-dimension subscheme corresponding to $I_i$, then the number of the auto-equivalences of $Z_2\subseteq Z_1\subset S[k]$ is less than or equal to that of $Z_1\subset 
S[k]$, which is finite by the stability of $I_1$.}.
\begin{notn}
Following \cite[Secttion 2.3]{LW15}, let $\mathfrak{A}_\diamond$ be the Artin stack of expanded degenerations for the pair $(S,D)$, and let $\mathcal{S}\to \mathfrak{A}_\diamond$ be the universal family of surfaces over it. They fit into the fibered diagram $$\xymatrix{
\mathcal S \ar[d] \ar[r]& S \ar[d]\\ \mathfrak{A}_\diamond \ar[r] &\operatorname{Spec} \C.
}$$ Let $\SD{n_1\ge n_2}\to \mathfrak{A}_\diamond$ be the natural morphism; it factors through the substack $\mathfrak{A}_\diamond^{[\n]}\subset \mathfrak{A}_\diamond$ corresponding to the numerical data $\n$ (see \cite[Secttion 2.5]{LW15} for the construction of these substacks\footnote{Since we are dealing with zero dimensional subschemes their Hilbert polynomials (used in \cite{LW15}) are simply  the nonnegative integers $n_1, n_2$.}). $\mathfrak{A}_\diamond^{[\n]}$ is a smooth Artin stack of dimension 0. We use the same notation as in the absolute case to denote the  inclusion of the universal objects over   $\mathcal{S}\times_{\mathfrak{A}^{[\n]}_\diamond}\SD{\n}$:
$$0\neq \Phi: \i{n_1}\to \i{n_{2}}.$$ Let $\pi$ be the projection to the second factor of $\mathcal{S}\times_{\mathfrak{A}^{[\n]}_\diamond}\SD{\n}$, and $p$ be the projection to its first factor followed by the natural map $\mathcal{S}\to S$. \end{notn}
By the method of \cite[Section 3.9]{MPT10} and \cite{LW15}, one can see, after modifying our argument for the usual nested Hilbert schemes  (Proposition \ref{abs}), that there is a relative perfect obstruction theory $\mathcal{F}_{\rel}^\bullet\to \LL_{\SD{n_1\ge n_2}/\mathfrak{A}^{[\n]}_\diamond}$ with the relative virtual tangent bundle
\begin{align} \label{obrelative} \mathcal{F}_{\rel}^{\bullet \vee}:=  \cone \bigg( &\left[\dR\hom_\pi \big(\i{n_1},\i{n_1}\big)\oplus \dR\hom_\pi\big(\i{n_2},\i{n_2}\big)\right]_0\\& \notag \to \dR \hom_\pi \big(\i{n_1}, \i{n_2}\big)\bigg).\end{align}
Since the Artin stack $\mathfrak{A}_\diamond^{[\n]}$ is smooth of dimension 0 by \cite[Section 7]{BF97} there is a virtual fundamental class $$[\SD{n_1\ge n_2}]^{\vir}\in A_{n_1+n_2}(\SD{n_1\ge n_2})$$ associated to this relative perfect obstruction  theory.

Let $X:S  \rightsquigarrow S_0:= S_1\cup_D S_2$ be a good degeneration of the surface $S$ along $D$ over a pointed curve $(C,0)$\footnote{It means we have a nonsingular threefold $X$ over $C$, whose general fibers are isomorphic to $S$, and whose fiber over $0\in C$ is a normal crossing divisor $S_1\cup_D S_2$ consisting of two nonsingular surfaces $S_1, S_2$ glued along a nonsingular divisor isomorphic to $D$ and contained in $S_1$ and  $S_2$.}, and let \begin{equation}\label{expanded}\mathfrak{S}\to \fC\to C \end{equation} be the universal family of surfaces over the stack of expanded degenerations $\fC$  (see \cite{L01, L02},\cite[Sections 2.1-2.2]{LW15}). They fit into the fibered diagram $$\xymatrix{
\mathfrak S \ar[d] \ar[r]& X \ar[d]\\ \mathfrak{C} \ar[r] &C.
}$$  Following the construction of Li and Wu \cite{LW15}, one can construct the nested Hilbert scheme of points, denoted by $\mathfrak{S}^{[n_1\ge n_2]}$ on the fibers of $\mathfrak{S}$. The Hilbert scheme $\mathfrak{S}^{[n_1\ge n_2]}$ is a proper Deligne-Mumford stack over $\fC$ and its structure morphism factors through the substack $\fC^{[\n]}\subset \fC$ corresponding to the numerical data $\n$ (see \cite[Secttion 2.5]{LW15} for the construction of these substacks). $\fC^{[\n]}$ is a smooth Artin stack of dimension 1. Let $\fC^{[\n]}_0\subset \fC^{[\n]}$ be the substack corresponding to $0\in C$. A non-special fiber of $\mathfrak{S}^{[n_1\ge n_2]}$ is isomorphic to $\S{n_1\ge n_2}$, whereas the special fiber of $\mathfrak{S}^{[n_1\ge n_2]}$, denoted by  $S^{[n_1\ge n_2]}_0$, can be written as the (non-disjoint) union  \begin{equation} \label{central}S^{[n_1\ge n_2]}_0=\bigcup_{\tiny \begin{array}{c} \n=\n'+\n'' \end{array}} (S_1/D)^{[n'_1\ge n'_2]}\times (S_2/D)^{[n''_1\ge n''_2]}, \end{equation} where $\n'=(n'_1,n'_2)$ and $\n''=(n''_1,n''_2)$ with $n'_1 \ge n'_2  $ and $n''_1 \ge n''_2  $. Each component $$S^{[\n]}_{0,\n',\n''}:=(S_1/D)^{[n'_1\ge n'_2]}\times (S_2/D)^{[n''_1\ge n''_2]}$$ is the pull-back of a divisor $\fC^{[\n]}_{\n',\n''}\subset \fC^{[\n]}$. Let $\mathfrak{L}^{[\n]}_{\n',\n''}$ be the corresponding line bundle. We then have $$\bigotimes_{\n=\n'+\n''}\mathfrak{L}^{[\n]}_{\n',\n''}\cong \mathfrak{L}_0$$ where $\mathfrak{L}_0$ is the line bundle associated to the pull back of the divisor $\{0\}\subset C$.

 We denote the universal objects over $\mathfrak{S}\times_{\fC^{[\n]}} \mathfrak{S}^{[n_1\ge n_2]}$ by $$0\neq \Phi: \fI{n_1}\to \fI{n_{2}}.$$  The restriction of $\Phi$ to the component $\mathfrak{S}\times_ {\fC^{[\n]}}\left( (S_1/D)^{[n'_1\ge n'_2]}\times (S_2/D)^{[n''_1\ge n''_2]}\right)$ is identified with the pair of universal maps $$(\i{n'_1}\hookrightarrow \i{n'_2}, \i{n''_1}\hookrightarrow \i{n''_2}).$$ We denote by $\pi$ the projection to the second factor of $\mathfrak{S}\times_{\fC^{[\n]}} \mathfrak{S}^{[n_1\ge n_2]}$, and by $p$ the projection to its first factor followed by the natural morphism to the total space of the good degeneration of $S$ over $C$.
Again by the method of Section \ref{2-step} and \cite{MPT10, LW15},  one can construct a relative perfect obstruction theory $\frak{F}^\bullet_{\rel}\to \LL_{ \mathfrak{S}^{[n_1\ge n_2]}/\fC^{[\n]}}$ with the relative virtual tangent bundle:
\begin{align*}&\frak{F}_{\rel}^{\bullet \vee}=\\&\cone \bigg( \left[\dR\hom_\pi \big(\fI{n_1},\fI{n_1}\big)\oplus \dR\hom_\pi\big(\fI{n_2},\fI{n_2}\big)\right]_0 \to \dR \hom_\pi \big(\fI{n_1}, \fI{n_2}\big)\bigg).\end{align*} Since $\fC^{[\n]}$ is a smooth Artin stack ($h^{-1}(\LL_{\fC^{[\n]}})=0$ but $h^{1}(\LL_{\fC^{[\n]}})\neq 0$) there exists an absolute perfect obstruction theory $\frak{F}^{\bullet}\to \LL_{ \mathfrak{S}^{[n_1\ge n_2]}}$ associated to $\frak{F}_{\rel}^{\bullet}$ (see for example the argument after diagram (49) in \cite{MPT10}).
The restriction of $\frak{F}_{\rel}^{\bullet}$ to  $S_0^{[\n]}$ and its components $S_{0,\n',\n''}^{[\n]}$ induce relative perfect obstruction theories  $$\frak{F}_{0}^{\bullet}\to \LL_{S_0^{[\n]}/\fC^{[\n]}_0} \qquad \text{and} \qquad \frak{F}_{0,\n',\n''}^{\bullet}\to \LL_{S_{0,\n',\n''}^{[\n]}/\fC^{[\n]}_{0,\n',\n''}},$$ respectively. As in \cite{MPT10},  they satisfy the following compatibilities:
\begin{equation} \label{compatibilities} \frak{F}^{\bullet}|_{S_0^{[\n]}}\to \frak{F}_{0}^{\bullet}\to \mathfrak{L}^\vee_0[1],\quad \quad \frak{F}^{\bullet}|_{S_{0,\n',\n''}^{[\n]}}\to \frak{F}_{0,\n',\n''}^{\bullet}\to \mathfrak{L}_{\n',\n''}^{[\n] \vee}[1],\end{equation}  where each sequence is an exact triangle.

A decomposition $S_0[k_1,k_2]:=S_1[k_1]\cup_D S_2[k_2]$ yields the natural exact sequence \begin{equation*}  0\to \O_{S_0[k_1,k_2]}\to \O_{S_1[k_1]}\oplus \O_{S_2[k_2]}\to \O_D\to 0.\end{equation*} Suppose that $I_1\subseteq I_2$ is a nested pairs of relative ideal sheaves on $S_0[k_1,k_2]$, and let $I_i':=I_i|_{S_1[k_1]}$ and $I''_i:=I_i|_{S_2[k_2]}$. Tensoring the short exact sequence above with the perfect complexes $\dR\hom(I_1,I_1)$, $\dR\hom(I_2,I_2)$, and $\dR\hom(I_1,I_2)$ and applying $\dR\Gamma$ we get the commutative diagram
$$\xymatrix{
\oplus_{i=1}^2 \dR\Hom(I_i,I_i) \ar[r] \ar[d] &\oplus_{i=1}^2 \dR\Hom(I'_i,I'_i)\oplus\oplus_{i=1}^2 \dR\Hom(I''_i,I''_i)\ar[r] \ar[d]& \dR \Gamma \O_D \oplus \dR \Gamma \O_D \ar[d]^-{[-1\; 1]} \\ 
\dR\hom(I_1,I_2) \ar[r] & \dR\Hom(I'_1,I'_2)\oplus \dR\Hom(I''_1,I''_2)\ar[r] & \dR \Gamma \O_D
}
$$ where each row is an exact triangle and the the first two vertical maps are induced from the natural inclusions $I_1\subseteq I_2$, $I'_1\subseteq I'_2$ and $I''_1\subseteq I''_2$ as in Section \ref{2-step}, and in the thrid column we have used the relativity condition of ideal sheaves i.e. $I_i|_D=\O_D$.
As before the vertical maps factor through the trace free parts and hence, using the natural exact triangle
$$\dR\Gamma \O_S\to \dR\Gamma \O_{S_1}\oplus \dR\Gamma \O_{S_2}\to \dR\Gamma \O_{D} ,$$
this induces the following commutative diagram of the exact triangles
$$\xymatrix{
[\oplus_{i=1}^2 \dR\Hom(I_i,I_i)]_0 \ar[r] \ar[d] & [\oplus_{i=1}^2 \dR\Hom(I'_i,I'_i)]_0\oplus[\oplus_{i=1}^2 \dR\Hom(I''_i,I''_i)]_0\ar[r] \ar[d]&  \dR \Gamma \O_D \ar@{=}[d]\\ 
\dR\hom(I_1,I_2) \ar[r] & \dR\Hom(I'_1,I'_2)\oplus \dR\Hom(I''_1,I''_2)\ar[r] & \dR \Gamma \O_D.
}
$$
Taking the cones we get the isomorphism
\begin{align*}&\cone \bigg( \left[\dR \Hom \left(I_1,I_1\right)\oplus \dR\Hom \left(I_2 ,I_2\right)\right]_0 \to \dR \Hom\left(I_1, I_2\right)\bigg) \cong \\&\quad 
\cone \bigg( \left[\dR \Hom \left(I'_1,I'_1\right)\oplus \dR\Hom \left(I'_2 ,I'_2\right)\right]_0 \to \dR \Hom\left(I'_1, I'_2\right)\bigg)\bigoplus\\ &
\quad \quad \cone \bigg( \left[\dR \Hom \left(I''_1,I''_1\right)\oplus \dR\Hom \left(I''_2 ,I''_2\right)\right]_0 \to \dR \Hom\left(I''_1, I''_2\right)\bigg).
\end{align*}

One of the upshots is that following the construction of \cite{MPT10, LW15}, we are led by the isomorphism above to the following degeneration formula for the virtual integration over $\S{n_1\ge n_2}$ (\cite[Thm. 16]{ MPT10}, \cite[Prop. 6.5, Thm. 6.6]{LW15}). This is done by using the compatibilities \eqref{compatibilities} and relating the relative prefect obstruction theory $\mathfrak{F}_{\rel}^\bullet$ to the absolute perfect obstruction theories  $\Fb$ (given in Proposition \ref{abs}) and $\mathcal{F}^\bullet_{\rel}$ (given by \eqref{obrelative}):

\begin{prop} \label{degen}  Let $\alpha$ be a cohomology class in the total space of $\mathfrak{S}^{[n_1\ge n_2]}$, then, 
$$\int_{[\S{n_1\ge n_2}]^{\vir}}\alpha=\sum_{\n=\n'+\n''}\left( \int_{[(S_1/D)^{[n'_1\ge n'_2]}]^{\vir}}\alpha\right)\cdot\left( \int_{[(S_2/D)^{[n'_1\ge n'_2]}]^{\vir}}\alpha \right).$$
\end{prop}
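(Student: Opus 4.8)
The plan is to derive this as a routine application of the degeneration framework for virtual classes of Li–Wu type, the only novelty being the bookkeeping of the nested data $\n = \n' + \n''$. First I would recall that the absolute perfect obstruction theory $\Fb$ of Proposition \ref{abs} (equivalently Corollary \ref{nestpointabs}) on $\S{n_1\ge n_2}$, the relative obstruction theory $\mathcal F^\bullet_{\rel}$ on $\SD{n_1\ge n_2}$ from \eqref{obrelative}, and the obstruction theory $\frak F^\bullet_{\rel}$ on $\mathfrak S^{[n_1\ge n_2]}$ all have the \emph{same shape}: a cone of
$$\left[\dR\hom_\pi(\i{n_1},\i{n_1})\oplus\dR\hom_\pi(\i{n_2},\i{n_2})\right]_0 \to \dR\hom_\pi(\i{n_1},\i{n_2}).$$
The point is that this shape is stable under all the base changes and restrictions appearing in the degeneration, which is exactly what the displayed direct-sum decomposition of cones established just above the statement records: restricted to a stratum $S^{[\n]}_{0,\n',\n''} = (S_1/D)^{[n'_1\ge n'_2]}\times (S_2/D)^{[n''_1\ge n''_2]}$, the complex splits into the corresponding complexes on the two factors, with the $\dR\Gamma\O_D$ terms cancelling because of the relativity condition $I_i|_D\cong \O_D$.

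Next I would set up the deformation to the normal cone / specialization map for the family $\mathfrak S^{[n_1\ge n_2]}\to \fC^{[\n]}\to C$, following Li \cite{L01,L02} and Li–Wu \cite{LW15}. The general fiber is $\S{n_1\ge n_2}$ and the special fiber is the union \eqref{central} of the strata $S^{[\n]}_{0,\n',\n''}$. Specialization of cycles gives $[\S{n_1\ge n_2}]^{\vir}$ pushing to a class supported on $S^{[\n_1\ge\n_2]}_0$; the compatibilities \eqref{compatibilities} between $\frak F^\bullet$, its restriction $\frak F^\bullet_0$ to the special fiber, and the restrictions $\frak F^\bullet_{0,\n',\n''}$ to the strata (each an exact triangle twisted by the relevant line bundle $\mathfrak L_0$ resp. $\mathfrak L^{[\n]}_{\n',\n''}$, with $\bigotimes_{\n=\n'+\n''}\mathfrak L^{[\n]}_{\n',\n''}\cong\mathfrak L_0$) are precisely the input needed for the virtual-class degeneration theorem. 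Invoking \cite[Thm.~16]{MPT10} together with \cite[Prop.~6.5, Thm.~6.6]{LW15} — whose hypotheses are verified by the cone-splitting above and the obstruction-theory compatibilities — yields
$$[\S{n_1\ge n_2}]^{\vir} \;=\; \sum_{\n=\n'+\n''} [(S_1/D)^{[n'_1\ge n'_2]}]^{\vir} \times [(S_2/D)^{[n''_1\ge n''_2]}]^{\vir}$$
at the level of (refined) classes on the special fiber. Capping with $\alpha$ and pushing forward to a point then gives the stated numerical identity, since $\alpha$ is pulled back from the total space and restricts compatibly to each stratum.

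The main obstacle, and the only place genuine work is needed beyond citing \cite{MPT10, LW15}, is checking that the obstruction-theory compatibilities \eqref{compatibilities} are genuinely compatible with the splitting of the cone over a stratum — i.e. that the triangle $\frak F^\bullet|_{S^{[\n]}_{0,\n',\n''}}\to \frak F^\bullet_{0,\n',\n''}\to \mathfrak L^{[\n]\vee}_{\n',\n''}[1]$ matches, factor by factor, the direct-sum decomposition coming from the Mayer–Vietoris sequence $0\to\O_{S_0[k_1,k_2]}\to \O_{S_1[k_1]}\oplus\O_{S_2[k_2]}\to\O_D\to 0$ tensored with the relevant $\dR\hom$ complexes. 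This is exactly analogous to the verification in \cite[Section 3.9]{MPT10}, and the new feature here — that the universal map $\Phi\colon\i{n_1}\to\i{n_2}$ restricts on a stratum to the pair $(\i{n'_1}\hookrightarrow\i{n'_2}, \i{n''_1}\hookrightarrow\i{n''_2})$ — is handled by the same trace-free / relativity argument already displayed before the statement. Once this is in place, and once one notes that the sum in \eqref{central} and in the degeneration formula ranges over all decompositions $\n=\n'+\n''$ with $n'_1\ge n'_2$, $n''_1\ge n''_2$ (which is automatic from the nesting on each side), the proof is a formal consequence of the cited degeneration machinery.
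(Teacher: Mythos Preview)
Your proposal is correct and follows essentially the same approach as the paper: the paper's own argument is precisely to invoke the cone-splitting isomorphism established just before the statement, the compatibilities \eqref{compatibilities}, and then cite \cite[Thm.~16]{MPT10} and \cite[Prop.~6.5, Thm.~6.6]{LW15} to conclude. If anything, you have spelled out more of the mechanics (specialization map, stratum-by-stratum matching, the role of the relativity condition) than the paper itself does.
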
 \qed

\begin{rmk} \label{hilbpts}
In Proposition \ref{degen}, if $n=n_1=n_2$ then $\mathfrak{S}^{[n_1\ge n_2]}\cong \mathfrak{S}^{[n]}$ constructed by \cite{LW15}, and by the same argument as in proof of Theorem \ref{thm1.2} part 1, one can recover the usual degeneration formula for the Hilbert schemes of points used in \cite{T12, LT14, GS16}:
\begin{equation} \label{usualdegen}
\int_{\S{n}}\alpha=\sum_{n=n'+n''}\left( \int_{(S_1/D)^{[n']}}\alpha\right)\cdot\left( \int_{[(S_2/D)^{[n'']}}\alpha \right).
\end{equation}
\end{rmk}

\subsection{Double point relation} \label{2ble} 
Let $(S,D)$ be a pair of nonsingular projective surface and a nonsingular effective divisor as in Section \ref{sec:relative}, and let $M$ be line bundle on $S$. Also, recall the definitions of $$\pi:\mathcal{S}\times_{\mathfrak{A}^{[\n]}_\diamond}\SD{\n}\to \SD{\n},\quad \quad p:\mathcal{S}\times_{\mathfrak{A}^{[\n]}_\diamond}\SD{\n}\to S.$$ Let $\D\subset \mathcal{S}$ be the proper transform of $D\subset S$ via $p$. Note that we use $\pi$ and $p$ for the similar natural morphisms from $\mathfrak{S}\times_{\fC^{[\n]}} \mathfrak{S}^{[n_1\ge n_2]}$ as well.
\begin{defi} \label{Kgrpelt} Define the following element in  $K(\SD{n_1\ge n_2})$ of rank $n_1+n_2$: $$\kk{n_1\ge n_2}_M:=\left[\dR\pi_*p^*M \right]-\left[\dR\hom_\pi(\i{n_1},\i{n_2}\otimes p^*M)\right].$$  
Define the following generating series:
$$Z_{\nest}(S/D,M):=\sum_{n_1\ge n_2\ge 0}q_1^{n_1}q_2^{n_2}\int_{[\SD{n_1\ge n_2}]^{\vir}}c(\kk{n_1\ge n_2}_M).$$
If  $D=0$ we drop it from the notation.
\end{defi}


\begin{lem}\label{satis}  Given a good degeneration $S  \rightsquigarrow S_0:= S_1\cup_D S_2$, and a choice of degeneration of line bundles  $$\pic(S) \ni M \rightsquigarrow  M_i \in \pic(S_i) \quad i=1, 2,$$ we get the degeneration of the class $c(\sK^{[n_1\ge n_2]}_M)$ whose restriction to the component $$(S_1/D)^{[n'_1\ge n'_2]}\times (S_2/D)^{[n''_1\ge n''_2]}$$ of the central fiber of  $\mathfrak{S}^{[n_1\ge n_2]}$ is $c(\kk{n'_1\ge n'_2}_{M_1}) \boxtimes c(\kk{n''_1\ge n''_2}_{M_2}).$
\end{lem}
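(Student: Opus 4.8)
The plan is to show that the $K$-theory class $\kk{n_1\ge n_2}_M$ is \emph{compatible with the good degeneration} in exactly the way the virtual cycles are (Proposition~\ref{degen}), and then to feed this compatibility, together with the double point relation of Lee--Levine--Pandharipande, into the degeneration formula. First I would unwind the definition: over $\mathfrak{S}\times_{\fC^{[\n]}}\mathfrak{S}^{[n_1\ge n_2]}$ we have the two universal ideal sheaves $\fI{n_1}\hookrightarrow\fI{n_2}$, the projection $\pi$ to $\mathfrak{S}^{[n_1\ge n_2]}$, and $p$ to the total space of the degeneration. Set $$\widetilde{\sK}_M:=\left[\dR\pi_*p^*M\right]-\left[\dR\hom_\pi\!\left(\fI{n_1},\fI{n_2}\otimes p^*M\right)\right]\in K\!\left(\mathfrak{S}^{[n_1\ge n_2]}\right),$$ which restricts on a general fibre to $\sK^{[n_1\ge n_2]}_M$. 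The content of the lemma is the identification of its restriction to the component $(S_1/D)^{[n'_1\ge n'_2]}\times (S_2/D)^{[n''_1\ge n''_2]}$ of the central fibre.

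The key step is the same ``gluing short exact sequence'' computation that is already carried out just before Proposition~\ref{degen}. Concretely, restrict to a point of the central fibre: a decomposition $S_0[k_1,k_2]=S_1[k_1]\cup_D S_2[k_2]$ with nested relative ideal sheaves $I_1\subseteq I_2$ on $S_0[k_1,k_2]$, and $I'_i:=I_i|_{S_1[k_1]}$, $I''_i:=I_i|_{S_2[k_2]}$. Tensor the Mayer--Vietoris sequence $0\to\O_{S_0[k_1,k_2]}\to\O_{S_1[k_1]}\oplus\O_{S_2[k_2]}\to\O_D\to 0$ first with $p^*M$ and then with $\dR\hom(I_1,I_2\otimes p^*M)$, and apply $\dR\Gamma$. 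Using the relativity condition $I_i|_D\cong\O_D$ (so that $\dR\hom(I_1,I_2\otimes M)|_D\cong \dR\Gamma(\O_D\otimes M|_D)$ in a manner compatible with the untwisted case), the two resulting exact triangles have the \emph{same} third term $\dR\Gamma(\O_D\otimes M|_D)$, which cancels in the alternating sum in the $K$-group. Therefore $$\widetilde{\sK}_M\big|_{\text{point}} = \sK^{[n'_1\ge n'_2]}_{M_1}\big|_{\text{point}} \;+\; \sK^{[n''_1\ge n''_2]}_{M_2}\big|_{\text{point}}$$ in $K(\mathrm{Spec}\,\C)$; carrying this out in families over the universal objects (which are flat, so base change applies) upgrades it to $$\widetilde{\sK}_M\big|_{(S_1/D)^{[n'_1\ge n'_2]}\times (S_2/D)^{[n''_1\ge n''_2]}} \;=\; \kk{n'_1\ge n'_2}_{M_1}\boxplus\kk{n''_1\ge n''_2}_{M_2}$$ in the $K$-group of the product. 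Taking total Chern classes and using multiplicativity under $\boxplus$ (i.e. $c(\sA\boxplus\sB)=c(\sA)\boxtimes c(\sB)$) yields precisely $c(\kk{n'_1\ge n'_2}_{M_1})\boxtimes c(\kk{n''_1\ge n''_2}_{M_2})$.

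The main obstacle, and the only place requiring real care, is the behaviour of $\dR\pi_* p^*M$ and of the relative $\ext$-sheaves across the special fibre where the target acquires expanded components $S_i[k_i]$: one must check that $p^*M$ restricted to such an expanded surface still restricts trivially (up to the relevant line bundle on each $D_j$) on the singular divisors, so that the third-term cancellation persists on every $k$-step model, and that the formation of $\dR\hom_\pi$ commutes with restriction to the components $S^{[\n]}_{0,\n',\n''}$. This is exactly the type of bookkeeping already handled in \cite{LW15} and in \cite{MPT10}, and the argument preceding Proposition~\ref{degen} is essentially the $M=\O_S$ case of what is needed; I would simply observe that tensoring every complex in that diagram by $p^*M$ (and replacing $\O_D$ by $\O_D\otimes M|_D$ in the third column, which still has the same cohomology on both sides by relativity) leaves the diagram commutative with exact rows, so the same cone computation goes through verbatim. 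With that in hand the lemma follows.
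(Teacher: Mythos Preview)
Your proof is correct and follows essentially the same approach as the paper: both arguments use the normalization exact sequence $0\to\O_{S_0[k_1,k_2]}\to\O_{S_1[k_1]}\oplus\O_{S_2[k_2]}\to\O_D\to 0$ tensored with $p^*\mM$ and with $\dR\hom(\fI{n_1},\fI{n_2}\otimes p^*\mM)$, observe that the $D$-terms are both $\dR\pi_*p^*M|_D$ and hence cancel in $K$-theory, and then apply multiplicativity of the total Chern class. One small remark: the double point relation you mention in your opening sentence plays no role in this lemma---it enters only later, in the passage from Proposition~\ref{gooddegen} to Corollary~\ref{descent}---so you can safely drop that from the plan here.
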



\begin{proof}
Let $\mM$ be the line bundle over the total space of the good degeneration of $S$ that gives the degeneration of $M$ as in the lemma. The derived pullbacks of the perfect complexes $\dR\hom_\pi(\fI{n_1},\fI{n_2}\otimes p^*\mM)$ and $\dR\pi_* p^*\mM$ to the component $(S_1/D)^{[n'_1\ge n'_2]}\times (S_2/D)^{[n''_1\ge n''_2]}$ fits in the exact triangles 
\begin{align*}&\dR\mathcal{H}om_\pi(\fI{n_1},\fI{n_2}\otimes p^*\mM)\\&\to \dR\mathcal{H}om_\pi(\i{n'_1},\i{n'_2}\otimes p^*M_1)\oplus \dR\mathcal{H}om_\pi(\i{n''_1},\i{n''_2}\otimes p^*M_2)\\&\to \dR\mathcal{H}om_\pi(\O_\D,\O_\D \otimes p^*M|_D)\cong \dR\pi_* p^*M|_D,\end{align*} and 
$\dR\pi_* p^*\mM \to \oplus_{i=1}^2 \dR\pi_* p^*M_i\to \dR\pi_* p^*M|_D.$
Now taking the difference of the $K$-group classes from the exact triangles above, and applying the total Chern class, we conclude that $c(\kk{n_1\ge n_2}_{M})$ degenerates to a class whose restriction to the component $${(S_1/D)^{[n'_1\ge n'_2]}\times (S_2/D)^{[n''_1\ge n''_2]}}$$ is  $c(\kk{n'_1\ge n'_2}_{M_1} \boxplus \kk{n''_1\ge n''_2}_{M_2})$. 

\end{proof}

A direct corollary of Proposition \ref{degen}  and Lemma \ref{satis} is 
\begin{prop} \label{gooddegen} Given a good degeneration $S  \rightsquigarrow S_0:= S_1\cup_D S_2$, and a choice of degeneration of line bundles  $$\pic(S) \ni M \rightsquigarrow  M_i \in \pic(S_i) \quad i=1, 2,$$ we have 
\begin{equation} \label{degenformul} Z_{\nest}(S,M)=Z_{\nest}(S_1/D,M_1)\cdot Z_{\nest}(S_2/D,M_2).\end{equation}
\end{prop}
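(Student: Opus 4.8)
The plan is to deduce the multiplicativity \eqref{degenformul} of the generating series $Z_{\nest}$ directly from the degeneration formula of Proposition \ref{degen} together with the compatibility of the insertion classes established in Lemma \ref{satis}. First I would fix the numerical data $n_1\ge n_2$ and set $\n=(n_1,n_2)$, and consider the total family $\mathfrak{S}^{[n_1\ge n_2]}\to \fC^{[\n]}$ over the stack of expanded degenerations associated to the good degeneration $S\rightsquigarrow S_0=S_1\cup_D S_2$ and the degeneration of line bundles $M\rightsquigarrow (M_1,M_2)$. By Lemma \ref{satis}, the class $c(\sK^{[n_1\ge n_2]}_M)$ on a general (non-special) fiber $\S{n_1\ge n_2}$ extends to a class on the total space of $\mathfrak{S}^{[n_1\ge n_2]}$ whose restriction to each component $(S_1/D)^{[n'_1\ge n'_2]}\times(S_2/D)^{[n''_1\ge n''_2]}$ of the central fiber \eqref{central} is the external product $c(\kk{n'_1\ge n'_2}_{M_1})\boxtimes c(\kk{n''_1\ge n''_2}_{M_2})$. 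This is exactly the hypothesis needed to apply the degeneration formula.

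Next I would apply Proposition \ref{degen} with $\alpha=c(\sK^{[n_1\ge n_2]}_M)$, which yields
\begin{align*}
\int_{[\S{n_1\ge n_2}]^{\vir}}c(\sK^{[n_1\ge n_2]}_M)=\sum_{\n=\n'+\n''}\left(\int_{[(S_1/D)^{[n'_1\ge n'_2]}]^{\vir}}c(\kk{n'_1\ge n'_2}_{M_1})\right)\cdot\left(\int_{[(S_2/D)^{[n''_1\ge n''_2]}]^{\vir}}c(\kk{n''_1\ge n''_2}_{M_2})\right),
\end{align*}
where on the right the insertion on each factor is the restriction of $\alpha$ to the corresponding factor, which is precisely the external-product class identified in Lemma \ref{satis}. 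Here $\n'=(n'_1,n'_2)$, $\n''=(n''_1,n''_2)$ with $n'_1\ge n'_2$, $n''_1\ge n''_2$, and $n'_i+n''_i=n_i$. Then I would multiply both sides by $q_1^{n_1}q_2^{n_2}$ and sum over all $n_1\ge n_2\ge 0$; using $q_1^{n_1}q_2^{n_2}=q_1^{n'_1+n''_1}q_2^{n'_2+n''_2}=(q_1^{n'_1}q_2^{n'_2})(q_1^{n''_1}q_2^{n''_2})$ and reorganizing the double sum as a product of two sums (over $\n'$ and over $\n''$ independently), the left-hand side becomes $Z_{\nest}(S,M)$ and the right-hand side factors as $Z_{\nest}(S_1/D,M_1)\cdot Z_{\nest}(S_2/D,M_2)$, by Definition \ref{Kgrpelt}.

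The only subtlety — which I expect to be the main point requiring care rather than a genuine obstacle — is the bookkeeping of the constraint $n'_i\ge n'_2$, $n''_1\ge n''_2$ in the summation range of the relative series: one must check that the constraint $n_1\ge n_2$ on the absolute side is consistent with summing over all pairs $(\n',\n'')$ with $\n'+\n''=\n$ that appear in \eqref{central}, so that the Cauchy-product rearrangement of the two generating series is valid term by term. This follows because the components appearing in the central fiber \eqref{central} are exactly those indexed by $\n=\n'+\n''$ with $n'_1\ge n'_2$ and $n''_1\ge n''_2$, which matches the index set of the product $Z_{\nest}(S_1/D,M_1)\cdot Z_{\nest}(S_2/D,M_2)$; and conversely every such decomposition with $n'_1\ge n'_2$, $n''_1\ge n''_2$ automatically satisfies $n_1=n'_1+n''_1\ge n'_2+n''_2=n_2$, so no spurious terms arise. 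This completes the proof.
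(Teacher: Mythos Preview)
Your proof is correct and follows exactly the approach the paper takes: the paper states the proposition as a direct corollary of Proposition \ref{degen} and Lemma \ref{satis} (with no further argument), and you have spelled out precisely that deduction, including the Cauchy-product bookkeeping on the index set. Nothing more is needed.
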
 \qed

In the situation of Proposition \ref{gooddegen}, Let $\PP$ be either of the projective bundles $\PP(\O_D+N_{S_1/D})\cong \PP(\O_D+N_{S_2/D})$, and let $M_{\PP}$ be the pullback of $M|_D$ to $\PP$. Applying Proposition \ref{gooddegen} to the degeneration to the normal cone of $D \subset S_i$ gives 
\begin{equation} \label{degenformul1} Z_{\nest}(S_i,M_i)=Z_{\nest}(S_i/D,M_i)\cdot Z_{\nest}(\PP/D,M_{\PP}).\end{equation} Similarly, the degeneration to the normal cone of $D \subset \PP$ gives 
\begin{equation} \label{degenformul2} Z_{\nest}(\PP,M_{\PP})=Z_{\nest}(\PP/D,M_{\PP})\cdot Z_{\nest}(\PP/D,M_{\PP}).\end{equation}

Let $\mathcal{M}_{2,1}(\C)^+$ be the group completion of the set of isomorphism classes of the pairs $(S,M)$, where $S$ is a smooth projective surface over $\C$ and $M$ is a line bundle on $S$ (see \cite[Definition 3]{LP12} ).

\begin{cor} \label{descent}
$Z_{\nest}(-,-)$ satisfies the relation\footnote{This relation is the analog of the relation (0.10) in \cite{LP09}.} \begin{equation} \label{degenformul3} Z_{\nest}(S,M)\cdot Z_{\nest}(S_1,M_1)^{-1}\cdot Z_{\nest}(S_2,M_2)^{-1}\cdot Z_{\nest}(\PP,M_{\PP})=1\end{equation}
 and hence it respects the double point relations in  $\mathcal{M}_{2,1}(\C)^+$. In other words, $Z_{\nest}(-,-)$ descends to a homomorphism 
$$Z_{\nest}(-,-): \omega_{2,1}(\C)\otimes_{\mathbb{Z}} \mathbb{Q}\to \mathbb{Q}[[q_1,q_2]]^*, $$ where $\omega_{2,1}(\C)$ is  the double point cobordism theory for line bundles on surfaces obtained by taking the quotient of $\mathcal{M}_{2,1}(\C)^+$ by all the double point relations.
\end{cor}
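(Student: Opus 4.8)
The plan is to deduce Corollary~\ref{descent} from the multiplicativity statements \eqref{degenformul}, \eqref{degenformul1}, \eqref{degenformul2} exactly as one deduces that a multiplicative genus descends to the (double point) cobordism ring. First I would recall the definition of the double point relation: a double point degeneration is a good degeneration $S\rightsquigarrow S_1\cup_D S_2$ as in Section~\ref{reldouble}, and the associated double point relation in $\mathcal{M}_{2,1}(\C)^+$ (following \cite[Definition~3]{LP12}) is
\[
[S,M]-[S_1,M_1]-[S_2,M_2]+[\PP_{S_1}(D),M_\PP]=0,
\]
where $\PP:=\PP(\O_D\oplus N_{S_1/D})\cong\PP(\O_D\oplus N_{S_2/D})$ and all line bundles are the ones appearing in the given degeneration of $M$. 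The cobordism group $\omega_{2,1}(\C)$ is the quotient of $\mathcal{M}_{2,1}(\C)^+$ by the subgroup generated by all such relations; since $Z_{\nest}(S_0,M_0)$ depends only on the isomorphism class of the pair $(S_0,M_0)$ (the perfect obstruction theory and the $K$-class $\kk{\bullet}_{M_0}$ are defined intrinsically), $Z_{\nest}$ gives a well-defined map on $\mathcal{M}_{2,1}(\C)^+$, valued in the multiplicative group $\mathbb{Q}[[q_1,q_2]]^*$, once we know it turns the additive double point relations into the trivial element.

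The key computation is the identity \eqref{degenformul3}, which I would prove by combining the three displayed formulas above. From \eqref{degenformul1}, applied once to $D\subset S_1$ and once to $D\subset S_2$, we get
\[
Z_{\nest}(S_1,M_1)=Z_{\nest}(S_1/D,M_1)\cdot Z_{\nest}(\PP/D,M_\PP),\qquad
Z_{\nest}(S_2,M_2)=Z_{\nest}(S_2/D,M_2)\cdot Z_{\nest}(\PP/D,M_\PP),
\]
and from \eqref{degenformul2}, $Z_{\nest}(\PP,M_\PP)=Z_{\nest}(\PP/D,M_\PP)^2$. Substituting into the left side of \eqref{degenformul3} and using \eqref{degenformul}, $Z_{\nest}(S,M)=Z_{\nest}(S_1/D,M_1)\cdot Z_{\nest}(S_2/D,M_2)$, every factor cancels:
\[
\frac{Z_{\nest}(S_1/D,M_1)Z_{\nest}(S_2/D,M_2)\cdot Z_{\nest}(\PP/D,M_\PP)^2}
{Z_{\nest}(S_1/D,M_1)Z_{\nest}(\PP/D,M_\PP)\cdot Z_{\nest}(S_2/D,M_2)Z_{\nest}(\PP/D,M_\PP)}=1.
\]
This is precisely \eqref{degenformul3}. (One must note that all these series have constant term $1$, since the $n_1=n_2=0$ term of each $Z_{\nest}$ is $1$, so they are genuine units in $\mathbb{Q}[[q_1,q_2]]^*$ and the division makes sense.) Taking the logarithm, \eqref{degenformul3} becomes the statement that the additive combination $\log Z_{\nest}(S,M)-\log Z_{\nest}(S_1,M_1)-\log Z_{\nest}(S_2,M_2)+\log Z_{\nest}(\PP,M_\PP)$ vanishes, i.e.\ $\log Z_{\nest}$ kills every double point relation; hence $Z_{\nest}$ itself factors through $\omega_{2,1}(\C)\otimes_{\ZZ}\mathbb{Q}$ as a homomorphism to $\mathbb{Q}[[q_1,q_2]]^*$, after tensoring with $\mathbb{Q}$ so that $\log$ and $\exp$ are available.

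The only genuine input needed beyond bookkeeping is that the three multiplicativity relations \eqref{degenformul}--\eqref{degenformul2} hold, and these follow from Proposition~\ref{degen} together with Lemma~\ref{satis}: Proposition~\ref{degen} gives $\int_{[\S{n_1\ge n_2}]^{\vir}}\alpha=\sum_{\n=\n'+\n''}\big(\int_{[(S_1/D)^{[\n']}]^{\vir}}\alpha\big)\big(\int_{[(S_2/D)^{[\n'']}]^{\vir}}\alpha\big)$, and Lemma~\ref{satis} identifies the restriction of $c(\kk{n_1\ge n_2}_M)$ to the component $(S_1/D)^{[\n']}\times(S_2/D)^{[\n'']}$ with $c(\kk{n'_1\ge n'_2}_{M_1})\boxtimes c(\kk{n''_1\ge n''_2}_{M_2})$; multiplying by $q_1^{n_1}q_2^{n_2}=q_1^{n'_1+n''_1}q_2^{n'_2+n''_2}$ and summing turns the convolution over $\n=\n'+\n''$ into the product of generating series, which is \eqref{degenformul} (and the same argument applied to degenerations to the normal cone yields \eqref{degenformul1} and \eqref{degenformul2}). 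Thus the main obstacle is not in Corollary~\ref{descent} itself—which is a short formal manipulation—but lies upstream in establishing Proposition~\ref{degen}, i.e.\ in setting up the relative and degenerate nested Hilbert schemes, their perfect obstruction theories, the compatibilities \eqref{compatibilities}, and the resulting degeneration formula along the lines of \cite{MPT10, LW15}. Given all of that, the proof of the corollary is just the cancellation computation above followed by the observation that a multiplicative, deformation-invariant assignment that satisfies the double point relation descends to the double point cobordism group after $\otimes\mathbb{Q}$.
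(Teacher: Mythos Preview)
Your proposal is correct and follows exactly the paper's approach: the paper's proof simply states that relation \eqref{degenformul3} follows immediately from relations \eqref{degenformul}--\eqref{degenformul2}, which is precisely the cancellation computation you wrote out in detail. Your additional remarks (constant term $1$, passing to $\log$, and the discussion of where the real work lies upstream in Proposition~\ref{degen}) are helpful elaborations but not needed beyond what the paper does.
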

\begin{proof}
Relation \eqref{degenformul3} follows immediately from relations \eqref{degenformul}-\eqref{degenformul2}.
\end{proof}
It is known that $\omega_{2,1}(\C)$ is generated by the following classes (see \cite{T12, LP12})
\begin{equation} \label{generators} [\PP^2,\O],\quad [\PP^2,\O(1)], \quad [\PP^1\times \PP^1,\O], \quad [\PP^1\times \PP^1,\O(1,0)].\end{equation}
Let $B_1,B_2,B_3,B_4$ be $Z_{\nest}(S,M)$ where $(S,M)$ is one of the pairs above respectively from left to right.
Define $$A_1:=B_1^{-1}B_2B_3^{3/2}B_4^{-3/2},\ A_2:=B_3^{1/2}B_4^{-1/2},\ A_3:=B_1^{-1/3}B_3^{-1/4},\ A_4:=B_1^{-2/3}B_3^{3/4}.$$

\begin{prop}\label{4series}
Let $S$ be a nonsingular projective surface and $M$ be a line bundle on $S$. Let $A_1, A_2, A_3, A_4 \in \mathbb{Q}[[q_1,q_2]]^*$ be defined as above (independent of $S$) then
$$Z_{\nest}(S,M)=A_1^{M^2}A_2^{M\cdot K_S}A_3^{K^2_S}A_4^{c_2(S)}.$$
\end{prop}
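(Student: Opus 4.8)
The plan is to deduce this purely formally from Corollary~\ref{descent} and the known structure of the double point cobordism group: $Z_{\nest}(-,-)$ is a group homomorphism out of $\omega_{2,1}(\C)\otimes_{\ZZ}\mathbb{Q}$, the four characteristic numbers $M^{2}$, $M\cdot K_S$, $K_S^{2}$, $c_2(S)$ assemble into an isomorphism of that group with $\mathbb{Q}^{4}$, and a homomorphism out of $\mathbb{Q}^{4}$ into the group of power series with constant term $1$ is nothing more than a choice of four such power series.

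First I would record that $Z_{\nest}(S,M)$ has constant term $1$: the $(n_1,n_2)=(0,0)$ summand of the series in Definition~\ref{Kgrpelt} equals $\int_{\operatorname{Spec}\C}c_{0}(\sK^{[0\ge 0]}_{M})=1$. Hence $Z_{\nest}$, and likewise all of the $A_i$ and $B_i$ appearing in the statement, lie in the subgroup
\[
G:=1+q_1\,\mathbb{Q}[[q_1,q_2]]+q_2\,\mathbb{Q}[[q_1,q_2]]\ \subset\ \mathbb{Q}[[q_1,q_2]]^{*}.
\]
The group $G$ is uniquely divisible, hence canonically a $\mathbb{Q}$-vector space; in particular rational powers of its elements are well defined and the right-hand side of the asserted formula makes sense. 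By Corollary~\ref{descent}, $Z_{\nest}$ descends to a group homomorphism $\overline{Z}\colon\omega_{2,1}(\C)\otimes_{\ZZ}\mathbb{Q}\to G$, which is automatically $\mathbb{Q}$-linear because its target is uniquely divisible.

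Next I would use the description of $\omega_{2,1}(\C)\otimes_{\ZZ}\mathbb{Q}$ from \cite{LP12,T12}, recalled in \eqref{generators}: it is spanned over $\mathbb{Q}$ by the four classes $[\PP^{2},\O]$, $[\PP^{2},\O(1)]$, $[\PP^{1}\times\PP^{1},\O]$, $[\PP^{1}\times\PP^{1},\O(1,0)]$. The numbers $M^{2}$, $M\cdot K_S$, $K_S^{2}$, $c_2(S)$ are Chern numbers of the pair $(S,M)$, so they descend to functionals on $\omega_{2,1}(\C)$ and together define a $\mathbb{Q}$-linear map
\[
\Psi\colon\omega_{2,1}(\C)\otimes_{\ZZ}\mathbb{Q}\longrightarrow\mathbb{Q}^{4},\qquad [S,M]\longmapsto\bigl(M^{2},\,M\cdot K_S,\,K_S^{2},\,c_2(S)\bigr).
\]
Evaluating $\Psi$ on the four generators produces the explicit matrix with rows $(0,0,9,3)$, $(1,-3,9,3)$, $(0,0,8,4)$, $(0,-2,8,4)$, whose determinant is nonzero; therefore these generators are $\mathbb{Q}$-linearly independent, form a basis, and $\Psi$ is an isomorphism. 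Putting $A_i:=\overline{Z}\bigl(\Psi^{-1}(e_i)\bigr)\in G$ for the standard basis $e_1,\dots,e_4$ of $\mathbb{Q}^{4}$, the $\mathbb{Q}$-linearity of $\overline{Z}$ gives, for every $(S,M)$,
\[
Z_{\nest}(S,M)=\overline{Z}\bigl(\Psi^{-1}\Psi[S,M]\bigr)=\prod_{i=1}^{4}A_i^{\,\Psi([S,M])_{i}}=A_1^{M^{2}}A_2^{M\cdot K_S}A_3^{K_S^{2}}A_4^{c_2(S)}.
\]

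It remains to express $A_1,\dots,A_4$ through $B_1,\dots,B_4$. Specializing the last identity to each of the four generators turns the definitions $B_i=Z_{\nest}$ of the $i$-th generator into the relations $B_i=\prod_j A_j^{\Psi(g_i)_j}$, i.e.\ a linear system over the $\mathbb{Q}$-vector space $G$ whose coefficient matrix is the one above; solving it (a routine inversion of a $4\times4$ integer matrix) yields the stated closed forms for the $A_i$. There is essentially no obstacle in this argument: all the enumerative and degeneration input has already been absorbed into Corollary~\ref{descent}, and the cited computation of $\omega_{2,1}(\C)$ supplies the rest. The one point I would be careful about is the passage to rational exponents, i.e.\ checking that $Z_{\nest}$ and each $A_i$ have constant term $1$ so that they genuinely live in the uniquely divisible group $G$; once this is in place, Proposition~\ref{4series} is the tautology that a linear map out of $\mathbb{Q}^{4}$ is determined by its values on the standard basis.
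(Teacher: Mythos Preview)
Your argument is correct and follows essentially the same route as the paper: both use Corollary~\ref{descent} to get a homomorphism out of $\omega_{2,1}(\C)\otimes\mathbb{Q}$, invoke the four generators~\eqref{generators}, and then do the linear algebra identifying the coefficients with the four Chern numbers. The paper outsources the explicit matrix inversion and rearrangement to the proof of \cite[Proposition~4.1]{T12}, whereas you spell out the $4\times4$ matrix and the uniquely divisible structure of $1+(q_1,q_2)\mathbb{Q}[[q_1,q_2]]$ that justifies rational exponents; this extra care is welcome, since the paper leaves that point implicit.
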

\begin{proof} By basechange, $$c(\sE^{n_1,n_2}_M)|_{\S{n_1\ge n_2}}=c(\sK_M^{[n_1\ge n_2]}),$$ 
and so by Proposition \ref{nestprodfano}, we have 
$$Z_{\nest}(S,M)=\sum_{n_1\ge n_2\ge 0}q_1^{n_1}q_2^{n_2}\int_{\S{n_1}\times \S{n_2}}c_{n_1+n_2}(\sE^{n_1,n_2})\cup c(\sE_M^{n_1,n_2}).$$ if $(S,M)$ is one of the generators \eqref{generators}. For a general $(S,M)$ as in the proposition we can express the class $[S,M]\in \omega_{2,1}(\C)$ as a linear combination of the generators \eqref{generators} \cite[Proposition 4.1]{T12}. The result then follows by applying the homomorphism $Z_{\nest}(-,-)$ of Corollary \ref{descent} and then rearranging the factors as in the proof of \cite[Proposition 4.1]{T12}.
\end{proof}

\begin{cor}\label {univpoly}
Let $S$ be a nonsingular projective surface and $M$ be a line bundle on $S$. Then the integral $$\int_{[\S{n_1\ge n_2}]^{\vir}}c(\kk{n_1\ge n_2}_M)$$ can be written as a degree $n_1+n_2$ universal polynomial in $M^2, M\cdot K_S, K_S^2, c_2(S)$.
\end{cor}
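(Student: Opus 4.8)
The plan is to deduce this corollary directly from Proposition~\ref{4series} together with the comparison between $\sK^{[n_1\ge n_2]}_M$ on $\S{n_1\ge n_2}$ and $\sE^{n_1,n_2}_M$ on the product, which was already invoked in the proof of Proposition~\ref{4series}. First I would recall that by Remark~\ref{bch} and the identity $c(\sE^{n_1,n_2}_M)|_{\S{n_1\ge n_2}}=c(\sK^{[n_1\ge n_2]}_M)$ (from \cite[Eq.~(5)]{CO12}), the invariant in question is the coefficient of $q_1^{n_1}q_2^{n_2}$ in the generating series $Z_{\nest}(S,M)$ of Definition~\ref{Kgrpelt} (with $D=0$). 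So the statement is equivalent to saying that each coefficient of $Z_{\nest}(S,M)$ is a universal polynomial in $M^2$, $M\cdot K_S$, $K_S^2$, $c_2(S)$, of the appropriate weighted degree.

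Next, Proposition~\ref{4series} gives the closed product form $Z_{\nest}(S,M)=A_1^{M^2}A_2^{M\cdot K_S}A_3^{K_S^2}A_4^{c_2(S)}$, where the four series $A_i\in\mathbb{Q}[[q_1,q_2]]^*$ are independent of $(S,M)$ and have constant term $1$. Writing $A_i=1+\sum_{(a,b)\neq(0,0)} A_i^{(a,b)}q_1^aq_2^b$, one has $A_i^{x}=\exp\bigl(x\log A_i\bigr)$ as an element of $\mathbb{Q}[[q_1,q_2]]$, and since $\log A_i$ is a power series with vanishing constant term, the coefficient of $q_1^{n_1}q_2^{n_2}$ in $A_i^{x}$ is a polynomial in $x$ of degree at most $n_1+n_2$ with coefficients in $\mathbb{Q}$. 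Taking the product over $i=1,\dots,4$, the coefficient of $q_1^{n_1}q_2^{n_2}$ in $Z_{\nest}(S,M)$ is therefore a polynomial in the four variables $M^2$, $M\cdot K_S$, $K_S^2$, $c_2(S)$ with rational coefficients that do not depend on $S$ or $M$; this is exactly the asserted universal polynomial.

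The only point requiring a little care is the assertion about the \emph{degree} $n_1+n_2$: I would track this by giving each of $M^2$, $M\cdot K_S$, $K_S^2$, $c_2(S)$ weight $2$, and assigning $q_i$ weight $1$ (so that $\int_{[\S{n_1\ge n_2}]^{\vir}}$ pairs against a class of cohomological degree $2(n_1+n_2)$, matching the virtual dimension). Then I would check that each $A_i$ is weighted-homogeneous in the sense that $A_i^{(a,b)}$ contributes in total weight $2(a+b)$ once multiplied by the corresponding intersection number; this is a formal consequence of the fact that $c(\sK^{[n_1\ge n_2]}_M)$ is a sum of terms $c_k$ of cohomological degree $2k$ and $\int_{[\S{n_1\ge n_2}]^{\vir}}$ is nonzero only in degree $2(n_1+n_2)$. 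I expect this bookkeeping to be the only real (and still routine) obstacle: once the weighting is set up correctly, extracting the coefficient of $q_1^{n_1}q_2^{n_2}$ from the exponential product automatically produces a polynomial of weighted degree $2(n_1+n_2)$, i.e.\ of degree $n_1+n_2$ in the four generators, completing the proof.
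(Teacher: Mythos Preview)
Your proposal is correct and follows essentially the same route as the paper: identify the invariant as the coefficient of $q_1^{n_1}q_2^{n_2}$ in $Z_{\nest}(S,M)$, invoke Proposition~\ref{4series}, and expand. One small remark: the first paragraph's appeal to Remark~\ref{bch} and \cite[Eq.~(5)]{CO12} is unnecessary---that the integral is the $q_1^{n_1}q_2^{n_2}$-coefficient of $Z_{\nest}(S,M)$ is immediate from Definition~\ref{Kgrpelt} with $D=0$, without any comparison to $\sE^{n_1,n_2}_M$.
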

\begin{proof} The integral in the proposition is the coefficient of $q_1^{n_1}q_2^{n_2}$ in $Z_{\nest}(S,M)$. The result follows after expanding the right hand side of the formula in Proposition \ref{4series} and extracting the coefficient of $q_1^{n_1}q_2^{n_2}$.
\end{proof}

\begin{cor} \label{cor:znzp} For any nonsingular projective surface $S$ and $M\in \pic(S)$ let $\cP:=c(\kk{n_1\ge n_2}_M)$. Then
$$\sN_S(n_1,n_2,0;\cP)=\int_{\S{n_1}\times \S{n_2}} c_{n_1+n_2}(\sE^{n_1,n_2})\cup c(\sE^{n_1,n_2}_M).$$
\end{cor}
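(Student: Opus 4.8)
The plan is to reduce the statement to Corollary~\ref{cor:znzp}'s direct predecessors, namely Proposition~\ref{nestprodfano} for toric surfaces and the double-point cobordism machinery (Corollary~\ref{descent}, Proposition~\ref{4series}), by showing that both sides of the claimed identity define the \emph{same} element of $\mathbb{Q}[[q_1,q_2]]^*$ after summing over $n_1\ge n_2$ with the generating-series packaging of Definition~\ref{Kgrpelt}. Concretely, I would first observe that the right-hand side, assembled into a generating series
$$Z_{\pro}(S,M):=\sum_{n_1\ge n_2\ge 0}q_1^{n_1}q_2^{n_2}\int_{\S{n_1}\times \S{n_2}}c_{n_1+n_2}(\sE^{n_1,n_2})\cup c(\sE^{n_1,n_2}_M),$$
is itself multiplicative under good degenerations of the pair $(S,M)$: by Definition~\ref{virbdl} the classes $\sE^{n_1,n_2}$ and $\sE^{n_1,n_2}_M$ are built from $\dR\pi'_*$ and $\dR\hom_{\pi'}$ of universal ideal sheaves, and over a degeneration $S\rightsquigarrow S_1\cup_D S_2$ these push-forwards sit in exact triangles whose third term is supported on the divisor $D$ (exactly as in the proof of Lemma~\ref{satis}); taking $K$-group differences and total Chern classes then yields $Z_{\pro}(S,M)=Z_{\pro}(S_1/D,M_1)\cdot Z_{\pro}(S_2/D,M_2)$ and the relative analogues. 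Hence $Z_{\pro}(-,-)$, like $Z_{\nest}(-,-)$, descends to a homomorphism on $\omega_{2,1}(\C)\otimes\mathbb{Q}$.

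Next I would match the two series on the four generators \eqref{generators}. For $(S,M)$ toric, Proposition~\ref{nestprodfano} gives $\iota_*[\S{n_1\ge n_2}]^{\vir}=[\S{n_1}\times\S{n_2}]\cap c_{n_1+n_2}(\sE^{n_1,n_2})$, so by the projection formula
$$\int_{[\S{n_1\ge n_2}]^{\vir}}\iota^*\big(c(\sE^{n_1,n_2}_M)\big)=\int_{\S{n_1}\times\S{n_2}}c_{n_1+n_2}(\sE^{n_1,n_2})\cup c(\sE^{n_1,n_2}_M);$$
and by Remark~\ref{bch} together with \cite[Eq.~(5)]{CO12} the restriction $\iota^*c(\sE^{n_1,n_2}_M)$ equals $c(\sK^{[n_1\ge n_2]}_M)=c(\kk{n_1\ge n_2}_M)$ (the $D=0$, untwisted relative Hilbert scheme being $\S{n_1\ge n_2}$ itself), so $Z_{\nest}(S,M)=Z_{\pro}(S,M)$ for each of $[\PP^2,\O],[\PP^2,\O(1)],[\PP^1\times\PP^1,\O],[\PP^1\times\PP^1,\O(1,0)]$. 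Since \eqref{generators} generate $\omega_{2,1}(\C)\otimes\mathbb{Q}$ and both $Z_{\nest}$ and $Z_{\pro}$ are cobordism homomorphisms agreeing on generators, they agree for every $(S,M)$; extracting the coefficient of $q_1^{n_1}q_2^{n_2}$ gives the stated identity for all nonsingular projective $S$.

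The main obstacle is the degeneration-invariance of $Z_{\pro}(S,M)$: one must check carefully that the class $c_{n_1+n_2}(\sE^{n_1,n_2})\cup c(\sE^{n_1,n_2}_M)$ really does degenerate so that its restriction to the boundary stratum $\S{n'_1}\times\S{n'_2}\times\S{n''_1}\times\S{n''_2}$ becomes the external product of the corresponding classes for $(S_1/D)$ and $(S_2/D)$, and that the sum over $\n=\n'+\n''$ is exactly the one produced by the usual Hilbert-scheme degeneration formula \eqref{usualdegen} applied componentwise (here one uses Remark~\ref{hilbpts}, noting the ambient $\S{n_1}\times\S{n_2}$ carries no virtual class issues). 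Everything else—the projection formula, the Carlsson--Okounkov vanishing $c_i(\sE^{n_1,n_2}_M)=0$ for $i>n_1+n_2$, and the rearrangement of factors on the generators—is bookkeeping parallel to the proofs of Lemma~\ref{satis}, Proposition~\ref{4series}, and \cite[Proposition~4.1]{T12}.
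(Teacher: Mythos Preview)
Your strategy is correct and would succeed, but it differs from the paper's route in one key respect. The paper does \emph{not} attempt to show that $Z_{\pro}(-,-)$ is a cobordism homomorphism. Instead, it invokes the Ellingsrud--G\"ottsche--Lehn induction scheme \cite{EGL99} (together with Grothendieck--Riemann--Roch, as in \cite[Section~3]{CO12}) to conclude directly that each integral on the right-hand side is a universal polynomial in $M^2, M\cdot K_S, K_S^2, c_2(S)$. Since Corollary~\ref{univpoly} gives the same conclusion for the left-hand side, and the two polynomials agree on all toric $(S,M)$ by Proposition~\ref{nestprodfano}, they must be equal. This bypasses entirely the degeneration analysis for $Z_{\pro}$.

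Your approach buys self-containment (no appeal to EGL), at the cost of the extra verification you flagged. That verification is slightly more delicate than you indicate: the degeneration of $\S{n_1}\times\S{n_2}$ produces components indexed by \emph{all} splittings $n_i=n'_i+n''_i$, with no nesting constraint $n'_1\ge n'_2$ or $n''_1\ge n''_2$, so for the generating series (summed over $n_1\ge n_2$) to be multiplicative you must argue that the ``off-diagonal'' components contribute zero, or else extend $Z_{\pro}$ to a sum over all $n_1,n_2\ge 0$ and match that with $Z_{\nest}$ on generators. This works---the vanishing $c_i(\sE^{n_1,n_2})=0$ for $i>n_1+n_2$ from \cite{CO12} forces $c_{n_1+n_2}$ of the restricted $K$-class to factor as the product of top Chern classes on each piece, and on toric generators the off-diagonal terms vanish because the nested Hilbert scheme is empty---but it is bookkeeping the paper avoids by citing EGL.
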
 

\begin{proof}
By Corollary \ref{univpoly} we know that the LHS of the statement above  i.e. 
$$\sN_S(n_1,n_2,0;\cP)=\int_{[\S{n_1\ge n_2}]^{\vir}} c(\kk{n_1\ge n_2}_M)$$ is a universal polynomial $P_1$ in $M^2, M\cdot K_S, K_S^2, c_2(S)$. On the other hand, using Grothendieck-Riemann-Roch formula and the induction scheme of Ellingsrud, G\"{o}ttsche, and Lehn (see \cite[Sections 3, 4]{EGL99} and \cite[Section 3]{CO12}),
we can express the RHS of the corollary in terms of a universal polynomial $P_2$ in $M^2, M\cdot K_S, K_S^2, c_2(S)$. But since the equality in the corollary holds for any $(S,M)$ in which $S$ is toric (by Proposition \ref{nestprodfano}), we conclude that $P_1=P_2$, and the result follows.

\end{proof}

This corollary allows us to write integrlas against $[\S{n_1 \ge n_2}]^{\vir}$ in terms of integrations over the product of Hilbert schemes of points. As said in the introduction the following generalization of this corollary is used in \cite{GSY17b} in the context of reduced localized DT invariants. In fact the only property of the integrand $c(\kk{n_1\ge n_2}_M)$ that was needed in the argument leading to Corollary \ref{cor:znzp} was its decomposition property under good degenerations of $S$ as stated in Lemma \ref{satis}. We can therefore prove similar identities as in Corollary \ref{cor:znzp} for other integrands with such a decomposition property. For example, by a same proof as Lemma \ref{satis} one can see that the Chern class of the twisted tangent bundle (Definition \ref{virbdl}), $c(\sT^M_{\S{n_i}})$, also has this property, and so does any product/quotient of these Chern classes. In particular, we can extend  Corollary \ref{cor:znzp} to the following more general statement: 
\begin{prop}\label{genznzp} 
 Let $L_1,\dots, L_s$, $L'_1,\dots, L'_{s'}$, $M_1,\dots, M_{t}$, be some line bundles on the nonsingular projective surface $S$, and $l_1,\dots, l_s$, $l'_1,\dots, l'_{s'}$, $m_1,\dots, m_{t}$ be finite sequences of $\pm 1$. Define
$$\cP:=\prod_{i=1}^s c(\sT^{L_i}_{\S{n_1}})^{l_i}\cup\prod_{i=1}^{s'} c(\sT^{L'_i}_{\S{n_2}})^{l'_i}\cup \prod_{i=1}^t c(\kk{n_1\ge n_2}_{M_i})^{m_i}.$$
Then,
\begin{align*}&\sN_S(n_1,n_2,0;\cP)=\\&\int_{\S{n_1}\times \S{n_2}} c_{n_1+n_2}(\sE^{n_1,n_2})\cup \prod_{i=1}^s c(\sT^{L_i}_{\S{n_1}})^{l_i}\cup\prod_{i=1}^{s'} c(\sT^{L'_i}_{\S{n_2}})^{l'_i}\cup \prod_{i=1}^t c(\sE^{n_1, n_2}_{M_i})^{m_i}.\end{align*}
\end{prop} \qed

\section{Vertex operator formulas and proof of Theorem \ref{thm4}}  \label{sec:co} Let $(S,M)$ be a pair of a projective nonsingular surface $S$ and a line bundles $M$ on $S$. Let $\sF=\oplus_n H^*(\S{n},\mathbb{Q})$.
Carlsson and Okounkov defined the operator $W(M_1)$ in $\operatorname{End}(\sF)[[z_1,z_1^{-1}]]$ by 
$$\langle W(M_1)\eta_1,\eta_2\rangle :=z_1^{n_2-n_1}\int_{\S{n_1}\times \S{n_2}}p_1^*\eta_1 \cup p_2^* \eta_2 \cup c_{n_1+n_2}(\sE^{n_1,n_2}_{M_1}), $$ where
$\langle-,-\rangle$ is the Poincar\'e pairing, $\eta_i \in H^*(\S{n_i},\mathbb{Q})$, $p_i$ is the projection to the $i$-th factor of  $\S{n_1}\times \S{n_2}$, and $\sE^{n_1,n_2}_{M_1}\in K(\S{n_1}\times \S{n_2})$ is as in Definition \ref{virbdl}. In other words, using \cite[Definition 16.1.2]{F13}, $W(M_1)$ is the operator associated to the family of correspondences $$c_{n_1+n_2}(\sE^{n_1,n_2}_{M_1}):\S{n_1} \vdash \S{n_2}\quad  n_1, n_2\ge 0.$$
If $M_2$ is another line bundle on $S$, we define $$W(M_1,M_2)(z_1,z_2):=W(M_2)(z_2)\circ W(M_1)(z_1).$$
By \cite[Proposition 16.1.2]{F13}, 
\begin{align*}
&\langle W(M_1,M_2)\eta_1,\eta_3\rangle=\\&\sum_{n_2} z_1^{n_2-n_1}z_2^{n_3-n_2}\int_{\S{n_1}\times \S{n_2}\times \S{n_3}} p_1^*\eta_1 \cup p_3^* \eta_3 \cup c_{n_1+n_2}(\sE^{n_1,n_2}_{M_1})\cup c_{n_2+n_3}(\sE^{n_2,n_3}_{M_2}),\end{align*} where
$\eta_i \in H^*(\S{n_i},\mathbb{Q})$, and $p_i$ is the projection to the $i$-th factor of  $\S{n_1}\times \S{n_2}\times \S{n_3}$.

Carlsson and Okounkov found an explicit formula for $W(-)$ in terms of vertex operators. Let $\alpha_{\pm}(-)$ denote Nakajima's annihilation/creation operators.
\begin{thm} (Carlsson-Okounkov \cite{CO12}) \label{CO}
$$W(M_1)=\Gamma_{-}(-M_1,-z_1)\circ \Gamma_{+}(-M_1^D,z_1),$$ where $$ \Gamma_{\pm}(M_1,z_1):=\exp\left(\sum_{n>0}\frac{ z_1^{\mp n}}{n}\alpha_{\pm n}(M_1)\right).$$
\end{thm}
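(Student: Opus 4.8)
The plan is to reduce this operator identity (which is \cite[Theorem~1]{CO12}) to the local model $S=\C^2$ by universality, carry out an equivariant localization computation on both sides, and then identify the two sides --- not by brute comparison of closed forms, but by checking that $W(M_1)$ satisfies the same commutation relations with the Heisenberg algebra as its conjectural vertex-operator expression, together with a computation of the value on the vacuum.

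\emph{Step 1: reduction to toric $S$.} Since $W(M_1)$ is assembled from the correspondences $c_{n_1+n_2}(\sE^{n_1,n_2}_{M_1})$ on $\S{n_1}\times\S{n_2}$, it suffices to verify equality of all matrix elements $\langle W(M_1)\eta_1,\eta_2\rangle$ with $\eta_i$ ranging over a basis of $H^*(\S{n_i},\mathbb{Q})$. Because $\sE^{n_1,n_2}_{M_1}=[\dR\pi'_*p'^*M_1]-[\dR\hom_{\pi'}(\i{n_1},\i{n_2}\otimes p'^*M_1)]$ is built out of tautological bundles and the Chern classes of $\i{n_1},\i{n_2}$ and $M_1$, the universality machinery of Ellingsrud--G\"ottsche--Lehn \cite{EGL99} (exactly as used in the proof of Corollary~\ref{cor:znzp}) shows each such matrix element is a universal polynomial in the Chern numbers of the pair $(S,M_1)$; hence the identity needs only be checked for toric $S$, where after passing to the $\dT$-equivariant setting everything is pinned down by the torus action on $\C^2$.

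\emph{Step 2: localization on both sides.} On the geometric side, Atiyah--Bott localization rewrites $\langle W(M_1)\eta_1,\eta_2\rangle$ as a sum over pairs of $\dT$-fixed points of $\S{n_1}$ and $\S{n_2}$ --- i.e.\ pairs of tuples of Young diagrams --- of the ratio $e(\sE^{n_1,n_2}_{M_1})/\bigl(e(T_{\S{n_1}})\,e(T_{\S{n_2}})\bigr)$ times the fixed-point restrictions of $\eta_1,\eta_2$, where the $\dT$-character of $\sE^{n_1,n_2}_{M_1}$ at such a point is an explicit difference of partition generating functions twisted by the torus weight of $M_1$ at the relevant fixed point of $S$, of the same shape as the computation feeding into \eqref{virtan}. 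On the operator side, $\Gamma_{\pm}$ being exponentials of Nakajima's creation/annihilation operators, Wick's theorem and the Heisenberg relations $[\alpha_m(\gamma),\alpha_n(\delta)]=-m\,\delta_{m+n,0}\langle\gamma,\delta\rangle$ give the matrix elements of $\Gamma_{-}(-M_1,-z_1)\circ\Gamma_{+}(-M_1^D,z_1)$ in the Nakajima fixed-point basis as explicit products over pairs of boxes.

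\emph{Step 3: the match, and the main obstacle.} Equating the two sides now becomes a Cauchy/hook-length type combinatorial identity, and making this argument clean is the step I expect to be hardest. The route I would favour is to avoid the closed-form comparison and instead show that $W(M_1)$ obeys the same commutation relations with the Heisenberg algebra as the right-hand side: I would compute $[\alpha_{\pm1}(\gamma),W(M_1)]$ by realizing $\alpha_{\pm1}(\gamma)$ as the elementary ``add/remove one point'' correspondences and tracking how $\dR\hom_{\pi'}(\i{n_1},\i{n_2}\otimes p'^*M_1)$, hence $\sE^{n_1,n_2}_{M_1}$, changes under a one-point modification of $Z_1$ or $Z_2$ via the evident exact triangles; the result should reproduce precisely the commutator of $\Gamma_{-}(-M_1,-z_1)\circ\Gamma_{+}(-M_1^D,z_1)$ with $\alpha_{\pm1}(\gamma)$ coming from the Wick computation of Step~2. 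Finally, since the operators $\alpha_{-1}(\gamma)$ generate $\sF=\bigoplus_n H^*(\S{n},\mathbb{Q})$ from the vacuum $1\in H^*(\S{0})$, these commutation relations pin down $W(M_1)$ once its value on the vacuum is known; and $W(M_1)\cdot 1$ is read off from the classes $c_{n_2}(\sE^{0,n_2}_{M_1})=c_{n_2}(M_1^{[n_2]})$ on $\S{n_2}$, whose generating series is a G\"ottsche-type infinite product that agrees with $\Gamma_{-}(-M_1,-z_1)\circ\Gamma_{+}(-M_1^D,z_1)\cdot 1=\Gamma_{-}(-M_1,-z_1)\cdot 1$ (the positive modes killing the vacuum). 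Combining the commutator computation with this vacuum value yields the theorem.
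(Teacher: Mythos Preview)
The paper does not prove this theorem at all: it is stated with the attribution ``(Carlsson--Okounkov \cite{CO12})'' and closed immediately with a \qed, so it is quoted as a black box from \cite{CO12} and then used in the vertex-operator manipulations that follow. There is therefore no ``paper's own proof'' to compare your proposal against.

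That said, your sketch is a reasonable outline of the strategy in \cite{CO12}: reduce to the equivariant local model, and then characterize $W(M_1)$ by its commutation relations with the Heisenberg generators together with its action on the vacuum. One caution: in Step~3 you assert that computing $[\alpha_{\pm 1}(\gamma),W(M_1)]$ via the ``add/remove one point'' correspondences and tracking the change in $\sE^{n_1,n_2}_{M_1}$ will reproduce the commutator predicted by the vertex-operator side, but this is precisely the nontrivial geometric input in \cite{CO12} and your proposal does not indicate how the relevant exact triangles actually yield the required identity; similarly, the identification of $W(M_1)\cdot 1$ with $\Gamma_{-}(-M_1,-z_1)\cdot 1$ requires knowing the generating series for $c_{n_2}(M_1^{[n_2]})$ in the Nakajima basis, which is itself a substantive computation. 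So as a proof the proposal is more of a roadmap than an argument, but since the paper under review simply cites the result, that is not a deficiency relative to what the paper does.
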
\qed

 Note that the operators $\Gamma_{\pm}$ satisfy the commutation relations $[\Gamma_{\pm}, \Gamma_{\pm}]=0$, and moreover,  $$\Gamma_{+}(M_2,z_2)\circ \Gamma_{-}(M_1,z_1)= (1+\frac{z_1}{z_2})^{\langle M_1,M_2 \rangle} \Gamma_{-}(M_1,z_1)\circ \Gamma_{+}(M_2,z_2).$$ Let $\c:=(1-\frac{z_1}{z_2})^{\langle M_1,M_2^D \rangle}$. Using these properties, we can write

\begin{align*}
W(M_1,M_2)&=
\Gamma_{-}(-M_2,-z_2)\circ \Gamma_{+}(-M_2^D,z_2)\circ \Gamma_{-}(-M_1,-z_1)\circ \Gamma_{+}(-M_1^D,z_1)\notag\\
&=\c \;\Gamma_{-}(-M_2,-z_2)\circ \Gamma_{-}(-M_1,-z_1)\circ \Gamma_{+}(-M_2^D, z_2)\circ \Gamma_{+}(-M_1^D, z_1).
\end{align*}
Let $\sN$ be the number-of-points operator: $\sN |_{\sF_n}=n\id$. It satisfies $$q^{\sN}\circ \Gamma_{-}(M_i,z_i)=\Gamma_{-}(M_i,qz_i)\circ q^{\sN}.$$
Starting with $\str\left(q^{\sN} \circ W(M_1,M_2)\right)$ and using the commutation relation of the super-trace $\str(A\circ B)=\str(B\circ A)$, we obtain
\begin{align*}
&\c \str\left( q^{\sN} \circ \Gamma_{-}(-M_2,-z_2)\circ \Gamma_{-}(-M_1,-z_1)\circ \Gamma_{+}(-M_2^D, z_2)\circ \Gamma_{+}(-M_1^D, z_1)\right)=\notag\\
&
\c \str \left(\Gamma_{-}(-M_2,-z_2q)\circ \Gamma_{-}(-M_1,-z_1q)\circ  q^{\sN}\circ \Gamma_{+}(-M_2^D,z_2)\circ \Gamma_{+}(-M_1^D,z_1)\right)=\notag\\
&
\c \str\left( q^{\sN}\circ \Gamma_{+}(-M_2^D,z_2)\circ \Gamma_{+}(-M_1^D,z_1) \circ \Gamma_{-}(-M_2,-z_2q)\circ \Gamma_{-}(-M_1,-z_1q)\right)=\notag\\
&
(1- \frac{z_1 q}{z_2})^{ \langle M_1, M_2^D \rangle }(1- \frac{z_2q}{z_1})^{\langle M_1^D, M_2\rangle }(1- q)^{\langle M_1^D, M_1\rangle +\langle M_2^D, M_2\rangle }\notag\\& \c \str\left( q^{\sN}\circ \Gamma_{-}(-M_2,-z_2q)\circ \Gamma_{-}(-M_1,-z_1q)\circ \Gamma_{+}(-M_2^D,z_2)\circ \Gamma_{+}(-M_1^D,z_1)\right).
\end{align*}
Iterating this process, we get
\begin{align*}
&\str \left( q^{\sN}\circ W(M_1,M_2)\right)=\\
& \prod_{n>0 }(1- \frac{z_1q^n}{z_2})^{\langle M_1, M_2^D\rangle } (1- \frac{z_2q^n}{z_1})^{\langle M_1^D, M_2\rangle }(1- q^{n})^{\langle M_1^D, M_1\rangle+\langle M_2^D, M_2\rangle }\notag\\
&
\c \str\left(q^{\sN}\circ \Gamma_{+}(-M_2^D,z_2)\circ\Gamma_{+}(-M_1^D,z_1)\right)=
\notag\\
& \prod_{n\ge 0}(1- \frac{z_1q^n}{z_2})^{\langle M_1, M_2^D\rangle}(1- q^{n})^{\langle M_1^D, M_1\rangle+\langle M_2^D, M_2\rangle-e(S)}\prod_{n>0}(1- \frac{z_2q^n}{z_1})^{\langle M_1^D, M_2\rangle},
\end{align*}  where for the last equality, we have used the fact that $\Gamma_+$ is a lower triangular operator, and G\"ottsche's formula $$\sum_{n\ge 0} e(\S{n})q^n=\prod_{n\ge 0}(1-q^n)^{-e(S)}.$$
Define $$q_1:=qz_1^{-2}, \quad q_2:=z_1^{2},$$ then we have shown
\begin{align} \label{str}
&
\str\left(q^{\sN}\circ W(M_1,M_2)(z_1, 1/z_1)\right)=\\& \prod_{n\ge 0}(1- q_2^{n+1}q_1^n)^{\langle M_1, M_2^D\rangle}(1- (q_1q_2)^{n})^{\langle M_1^D, M_1\rangle+\langle M_2^D, M_2 \rangle-e(S)}\prod_{n>0}(1- q_2^{n-1}q_1^n)^{\langle M_1^D, M_2 \rangle}.\notag
\end{align}
On the other hand, by \cite[Example 16.1.3]{F13},
\begin{align} \label{str1}
\str\left(q^{\sN}\circ W(M_1,M_2)(z_1, 1/z_1)\right)&=q^{n_1}z_1^{2(n_2-n_1)} \int_{\S{n_1}\times \S{n_2}}c_{n_1+n_2}(\sE^{n_1,n_2}_{M_1})c_{n_1+n_2}(\sE^{n_2,n_1}_{M_2})\\&\notag
=(-1)^{n_1+n_2}q_1^{n_1}q_2^{n_2}\int_{\S{n_1}\times \S{n_2}}c_{n_1+n_2}(\sE^{n_1,n_2}_{M_1})c_{n_1+n_2}(\sE^{n_1,n_2}_{M_2^D}),
\end{align}
where the last equality is because of Grothendieck-Verdier duality.

\begin{notn}
If $Z=\sum_{r_1,r_2\ge 0} a_{r_1,r_2}q_1^{r_1}q_2^{r_2}$ is a formal series, we define $$Z\, \left[q_{1}^{n_1}q_2^{n_2}\right]:=a_{n_1,n_2}.$$
\end{notn}

The following proposition completes the proof of Theorem \ref{thm4}:
\begin{prop} \label{coformul} Let $(S,M)$ be a pair of a projective nonsingular surface $S$ and a line bundles $M$ on $S$, then, 
\begin{align*}\int_{[\S{n_1\ge n_2}]^{\vir}} &c_{n_1+n_2}(\sK^{[n_1\ge n_2]}_M)=\\&(-1)^{n_1+n_2}
\prod_{n> 0}(1- q_2^{n-1}q_1^n)^{\langle K_S, M^D \rangle}(1- (q_1q_2)^{n})^{\langle M^D, M \rangle-e(S)}\left[q_{1}^{n_1}q_2^{n_2}\right].
\end{align*}
\end{prop}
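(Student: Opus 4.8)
The plan is to obtain Proposition \ref{coformul} by specializing the Carlsson--Okounkov two--operator generating function assembled in \eqref{str}--\eqref{str1} to the pair of line bundles $(M_1,M_2)=(\O_S,M^D)$, and then using Corollary \ref{cor:znzp} to convert the resulting integral over the smooth product $\S{n_1}\times\S{n_2}$ into the virtual integral over the (singular) nested Hilbert scheme.

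Concretely, I would first put $M_1=\O_S$ and $M_2=M^D$ in $W(M_1,M_2)$. Since $(M^D)^D=M$ and $\sE^{n_1,n_2}_{\O_S}=\sE^{n_1,n_2}$, and since on the $2(n_1+n_2)$-dimensional space $\S{n_1}\times\S{n_2}$ only the top Chern classes contribute (recall $c_i(\sE^{n_1,n_2}_M)=0$ for $i>n_1+n_2$), formula \eqref{str1} reads
$$\str\!\left(q^{\sN}\circ W(\O_S,M^D)(z_1,1/z_1)\right)=\sum_{n_1,n_2\ge 0}(-1)^{n_1+n_2}q_1^{n_1}q_2^{n_2}\int_{\S{n_1}\times\S{n_2}}c_{n_1+n_2}(\sE^{n_1,n_2})\cup c_{n_1+n_2}(\sE^{n_1,n_2}_M),$$
with $q_1=qz_1^{-2}$, $q_2=z_1^2$ as before. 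Next I would evaluate the three exponents in the closed form \eqref{str} at $(M_1,M_2)=(\O_S,M^D)$: using $M_1^D=\omega_S$, $M_2^D=M$, $c_1(\O_S)=0$, and symmetry of the Poincar\'e pairing, one gets $\langle M_1,M_2^D\rangle=0$ (so the first product trivializes), $\langle M_1^D,M_2\rangle=\langle K_S,M^D\rangle$, and $\langle M_1^D,M_1\rangle+\langle M_2^D,M_2\rangle-e(S)=\langle M^D,M\rangle-e(S)$. Thus \eqref{str} becomes
$$\str\!\left(q^{\sN}\circ W(\O_S,M^D)(z_1,1/z_1)\right)=\prod_{n> 0}\left(1-q_2^{n-1}q_1^n\right)^{\langle K_S,M^D\rangle}\left(1-(q_1q_2)^n\right)^{\langle M^D,M\rangle-e(S)}.$$

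To conclude, I would compare the coefficient of $q_1^{n_1}q_2^{n_2}$ in the last two displays for $n_1\ge n_2\ge 0$, and apply Corollary \ref{cor:znzp} to rewrite $\int_{\S{n_1}\times\S{n_2}}c_{n_1+n_2}(\sE^{n_1,n_2})\cup c_{n_1+n_2}(\sE^{n_1,n_2}_M)=\int_{[\S{n_1\ge n_2}]^{\vir}}c_{n_1+n_2}(\sK^{[n_1\ge n_2]}_M)$ (here $\sK^{[n_1\ge n_2]}_M$ has rank $n_1+n_2$, so only its top Chern class pairs nontrivially with the virtual class of dimension $n_1+n_2$). Multiplying through by $(-1)^{n_1+n_2}$ then gives the asserted identity. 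The coefficients with $n_1<n_2$ are automatically consistent: the right--hand product involves only monomials whose $q_1$-exponent is at least the $q_2$-exponent, in accordance with $\S{n_1\ge n_2}=\emptyset$ in that range; but these terms are irrelevant to the statement.

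I do not anticipate a genuine obstacle: the substantive inputs — the vertex--operator evaluation \eqref{str}, \eqref{str1}, and the reduction Corollary \ref{cor:znzp} of the virtual nested integral to an honest integral over the smooth $\S{n_1}\times\S{n_2}$ (itself resting on Proposition \ref{nestprodfano} and the cobordism/degeneration argument) — are already in place. The only points needing care are purely formal: tracking the $(-)^D$-duality so that the choice $M_2=M^D$ reproduces $\sE^{n_1,n_2}_M$ via $(M^D)^D=M$, checking that $M_1=\O_S$ kills the first Carlsson--Okounkov product factor, and matching the Poincar\'e-pairing conventions on $S$. No new geometric input is required.
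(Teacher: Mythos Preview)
Your proposal is correct and follows essentially the same route as the paper: specialize \eqref{str} and \eqref{str1} at $(M_1,M_2)=(\O_S,M^D)$, then invoke Corollary~\ref{cor:znzp}. You spell out the exponent computations and the dimension/rank reason why only top Chern classes survive, which the paper leaves implicit, but the argument is the same.
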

\begin{proof} 
By \eqref{str1},
\begin{align*}\int_{\S{n_1}\times \S{n_2}}& c_{n_1+n_2}(\sE^{n_1, n_2})\cup c_{n_1+n_2}(\sE^{n_1, n_2}_M)=\\&(-1)^{n_1+n_2}\str\left(q^{\sN}\circ W(\O_S,M^D)(z_1,1/z_1)\right)\left[q_{1}^{n_1}q_2^{n_2}\right].\end{align*}
The result now follows immediately from \eqref{str} and Corollary \ref{cor:znzp}.
\end{proof}

\noindent {\tt{amingh@math.umd.edu}},\quad 
\noindent {\tt{University of Maryland}} \\
\noindent {\tt{College Park, MD 20742-4015, USA}} \\\\

\noindent{\tt{artan@cmsa.fas.harvard.edu, Center for Mathematical Sciences and\\ Applications, Harvard University, Department of Mathematics, 20 Garden Street, Room 207, Cambridge, MA, 02139}}\\\\
\noindent{\tt{artan@cmsa.fas.harvard.edu, Centre for Quantum Geometry of Moduli Spaces, Aarhus University, Department of Mathematics
Ny Munkegade 118, building 1530, 319, 8000 Aarhus C, Denmark}}\\\\
\noindent{\tt{artan@cmsa.fas.harvard.edu, National Research University Higher School of Economics, Russian Federation, Laboratory of Mirror Symmetry, NRU HSE, 6 Usacheva str.,Moscow, Russia, 119048}}\\\\
\noindent{\tt{yau@math.harvard.edu}}
\noindent{\tt{Department of Mathematics, Harvard University, Cambridge, MA 02138, USA }}

\end{document}